\newtheorem {theorem} {Theorem}
\newtheorem {proposition} [theorem]{Proposition}
\newtheorem {corollary} [theorem]{Corollary}
\newtheorem {lemma}  [theorem]{Lemma}
\newtheorem {example} [theorem]{Example}
\newtheorem {remark} [theorem]{Remark}
\newtheorem {definition} [theorem]{Definition}
\newtheorem{mtheorem}{Theorem}
\tikzset{node distance=3cm, auto}
\begin{document}

\title[SF normal forms and PSVF's]
{Slow-fast normal forms arising from piecewise smooth vector fields}

\author[O. H. Perez, G. Rondón and P. R. da Silva]
{Otavio Henrique Perez$^{1,2}$, Gabriel Rondón$^{1}$ and Paulo Ricardo da Silva$^{1}$}

\address{$^{1}$S\~{a}o Paulo State University (Unesp), Institute of Biosciences, Humanities and
	Exact Sciences. Rua C. Colombo, 2265, CEP 15054--000. S. J. Rio Preto, S\~ao Paulo,
	Brazil.}
\address{$^{2}$Instituto Federal de São Paulo (IFSP), Rua Doutor Aldo Benedito, R. Sebastião Pierri, 250, CEP 14804--296. Araraquara, São Paulo, Brazil.}

\email{oh.perez@unesp.br}
\email{gabriel.rondon@unesp.br}
\email{paulo.r.silva@unesp.br}

\thanks{ .}

\subjclass[2020]{34C45, 34A09.}

\keywords {Piecewise smooth vector fields, Geometric singular perturbation theory, Regularization of piecewise smooth vector fields, Transition function.}
\date{}
\dedicatory{Dedicated to the memory of Jorge Sotomayor Tello.}
\maketitle

\begin{abstract}

We studied piecewise smooth differential systems of the form
$$\dot{z} = Z(z) = \dfrac{1 + \operatorname{sgn}(F)}{2}X(z) + \dfrac{1 - \operatorname{sgn}(F)}{2}Y(z),$$ where $F: \mathbb{R}^{n}\rightarrow \mathbb{R}$ is a smooth map having 0 as a regular value. We consider linear regularizations of the vector field $Z$ 
given by
$$\dot{z}= Z_{\varepsilon}(z) = \dfrac{1 + \varphi(F/\varepsilon)}{2}X(z) +\dfrac{1 - \varphi(F /\varepsilon)}{2}Y(z),$$where $\varphi$ is a transition function (not necessarily monotonic) 
and nonlinear regularizations of the vector field $Z$ whose  transition function is monotonic. It is a well-known fact that the regularized system is a slow–fast system. The main contribution of this paper is the study of typical singularities of slow-fast systems that arise from (linear or nonlinear) regularizations. We developed an algorithm to construct suitable transition functions, and we apply these ideas in order to create slow-fast singularities from normal forms of piecewise smooth vector fields. We present examples of transition functions that, after regularization of a PSVF normal form, generate normally hyperbolic, fold, transcritical, and pitchfork singularities.


\end{abstract}


\section{Introduction}
In real life there are phenomena whose mathematical models are expressed by piecewise smooth vector fields, which have been studied at least since 1937. These systems are used in many branches of applied sciences, for example, Physics, Control Theory, Economics, Cell Mitosis, etc. For more details see, for instance, \cite{bernardo,Filippov}.

A piecewise smooth vector field (or PSVF for short) is defined as follows: let $\Sigma$ be a subset of the ambient space (for example, a manifold embedded in $\mathbb{R}^{n}$). Such subset is called \emph{discontinuity locus} and it divides the ambient space in finitely many open subsets $\{U_{i}\}_{i = 1}^{k}$. In each open subset $U_{i}$ is defined a smooth vector field. This paper deals with the case where a smooth curve divides a neighbourhood of $0\in\mathbb{R}^{2}$ in two open regions. See Section \ref{sec-pwsvf} for a precise definition.

One of the most important question concerning PSVF's is: how to define the dynamics in $\Sigma$? In other words, how to define the transition between the dynamics defined in two different open sets?

Filippov \cite{Filippov} gave an answer defining the dynamics in $\Sigma$ as the convex combination of two vector fields. This defines the so called \emph{Sliding vector field}. We say that this vector field defined according to Filippov's ideas follows the \emph{Filippov's convention}.

However, for some models, the Filippov's convention is not sufficient to describe the dynamics. For example, in \cite{Jeffrey} a model involving friction between an object and a flat surface was studied. The author gave an example that Filippov's convention takes into account only kinetic friction, while it is possible to consider static friction as well.

Another way to define the dynamics in the discontinuity locus $\Sigma$ is combining two powerful tools: Regularizations of PSVF's and Blow-ups. A regularization process that is compatible with the Filippov's convention is the \emph{Sotomayor-Teixeira regularization} \cite{SotoTeixeira}, which consists in obtaining a one-parameter family of smooth vector fields $Z_{\varepsilon}$ converging to $Z$ when $\varepsilon\to 0$ (see Subsection \ref{subsec-reg}). By using blow-up techniques, the regularized system $\dot{z} = Z_{\varepsilon}(z)$ becomes a slow-fast system, and therefore we are able to apply classical results on geometric singular perturbation theory (see Subsection \ref{subsec-gspt}) in the study of PSVF's. Such a link between Regularization Processes and geometric singular perturbation theory is a recent approach in mathematics and we refer to \cite{BuzziSilvaTeixeira, LlibreSilvaTeixeira,LlibreSilvaTeixeira2,NovaesJeffrey,PanazzoloSilva, SilvaSarmientoNovaes} for further details. A similar approach can also be seen in \cite{KristiansenHogan}.

Different regularization processes lead to different slow-fast systems, which gives rise to different sliding, escaping or sewing regions (see \cite{PanazzoloSilva,SilvaSarmientoNovaes,SilvaMoraes}). In this paper, we consider \emph{linear} regularizations and \emph{nonlinear} regularizations. See subsections \ref{subsec-reg} and \ref{subsec-non-linear-regularizations} for precise definitions.

The dynamics of the \emph{linearly} regularized system depends on the so called \emph{transition function} $\varphi$, which can be monotonic or non monotonic. These results are well known, and in this paper we recall them highlighting the relation between properties of the graphic of $\varphi$, properties of the slow-fast system and sliding regions of PSVF's. See Theorem \ref{mtheorem-A} below.

The main goal of this paper is to study typical singularities of slow-fast systems that arise from (linear or nonlinear) regularizations. Concerning linear regularizations, we developed an algorithm to construct suitable transition functions (see Appendix \ref{sec-trans-func}), and we apply these ideas in order to create slow-fast singularities from normal forms of piecewise smooth vector fields. For both linear and nonlinear regularizations are presented examples of PSVF's such that, after (linear or nonlinear) regularization and directional blow-up, the slow-fast system presents normally hyperbolic, fold, transcritical or pitchfork singularities.

At some point, the reader may think that, after \emph{linear} regularization and blow-up, it is possible to generate any slow-fast singularity, since it is just a matter of a suitable choice of the transition function. In general, this is not true. Indeed, we show that it does not exist a transition function that generate a pitchfork singularity. However, if we consider \emph{nonlinear regularizations} it is possible to generate such a singularity (see Example \ref{exe-non-linear-reg}). This shows that nonlinear regularizations are more general than the linear ones (see also \cite{NovaesJeffrey,SilvaSarmientoNovaes}).

Our main results, Theorems \ref{mtheorem-A}, \ref{teob} and \ref{teoc} are stated and proved in Section \ref{mainresults}. In what follows, we briefly describe them.



Firstly, consider linear regularizations. Suppose that we drop the monotonicity condition of the transition function $\varphi$. In this context, we will prove that the critical points of $\varphi$ give rise to non normally hyperbolic points of the critical set $C_{0}$ of $\dot{z}=Z_\varepsilon(z)$. For more details see Item (a) of Theorem \ref{mtheorem-A}. 

In addition, item (b) of Theorem \ref{mtheorem-A} assures that we extend the classical Filippov sliding region when the transition function satisfy $|\varphi(x_{0})| > 1$ for some $x_{0}$ in the open interval $(-1,1)$. According to item (c) of the same Theorem, the dynamics in this extended sliding region is naturally defined using the Filippov sliding vector field.

Finally, item (d) of Theorem \ref{mtheorem-A} says that there are cases in which it is not possible to apply geometric singular perturbation theory in order to define the sliding dynamics in some points of $\Sigma$. See Figure \ref{fig-intro-trans-func}.

\begin{figure}[h]
  \center{\includegraphics[width=1\textwidth]{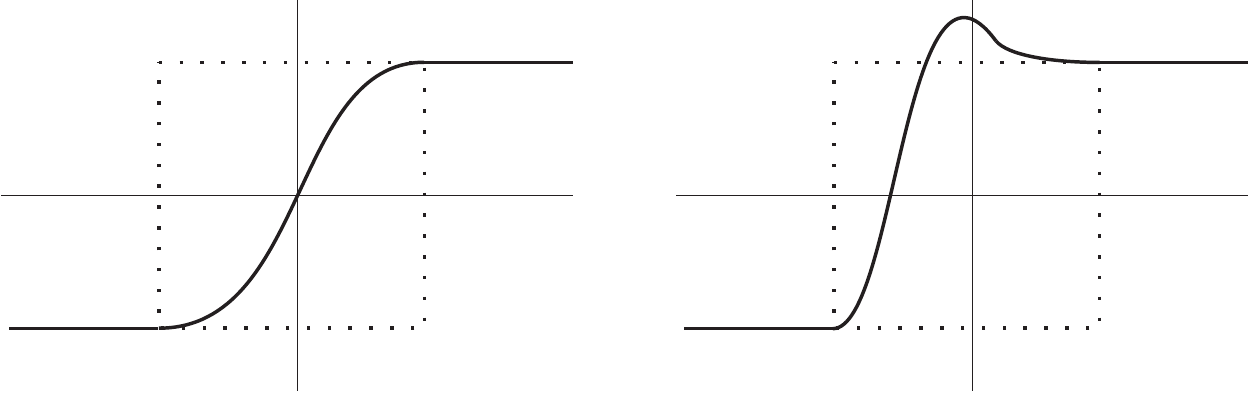}}\\
  \caption{\footnotesize{Monotonic transition function (left) and non monotonic transition function (right). The monotonic one generates only normally hyperbolic critical sets, and the sliding region coincides with the one proposed by Filippov. The non monotonic one has a critical point, which generates a non normally hyperbolic point of the critical manifold. Moreover, in this example, such a transition function extends the classical notion of sliding region.}}
  \label{fig-intro-trans-func}
\end{figure}

Slow-fast normal forms are well known in the literature (see Subsection \ref{sub-sec-normal-forms-sf} and the references therein). In Theorem \ref{teob} we state conditions that both PSVF and transition function must satisfy in order to generate classical slow-fast normal forms, such as fold and transcritical singularities. We apply Lemma \ref{lemma-trans-function-tech} in order to construct suitable transition functions for each slow-fast singularity (See Appendix \ref{sec-trans-func}). Moreover, we prove that there are slow-fast normal forms that can not be generated by \emph{linear} regularization processes. This is the case of the pitchfork singularity. See Figure \ref{fig-intro-sf-sing}.

In order to generate pitchfork singularities, we must consider nonlinear regularization. Theorem \ref{teoc} gives the conditions that must satisfy both monotonic transition function and vector field associated with the nonlinearly regularized system to generate this type of singularity.

\begin{figure}[h]
  \center{\includegraphics[width=1\textwidth]{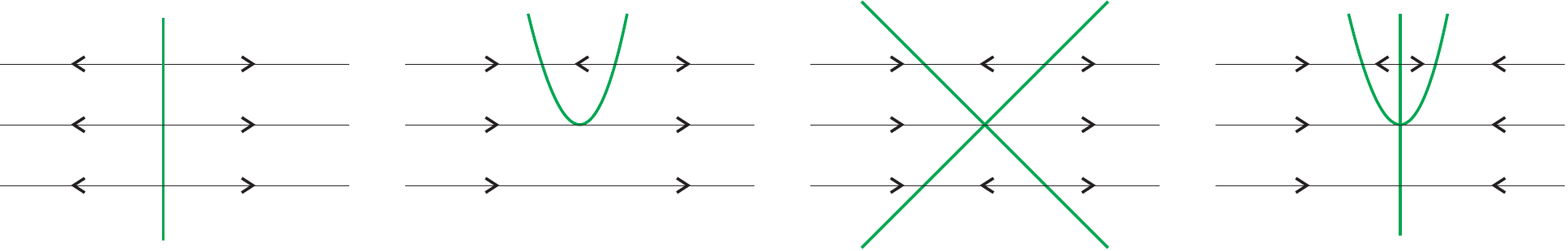}}\\
  \caption{\footnotesize{From the left to the right: normally hyperbolic, fold, transcritical and pitchfork points of a slow-fast system. It is not possible to generate the last one with linear regularizations, for any transition function. However, it is possible to generate it with nonlinear regularizations. The critical set is highlighted in green.}}
  \label{fig-intro-sf-sing}
\end{figure}

The paper is organized as follows. In Section \ref{sec-pwsvf} we present some introductory notions on PSVF, regularization processes, geometric singular perturbation theory and slow-fast normal forms. In section \ref{mainresults} we state and prove Theorems \ref{mtheorem-A}, \ref{teob}, and \ref{teoc}. Subsection \ref{subsec-fold-trans-pitch-reg} is dedicated to describe the dynamics of the (linearly and nonlinearly) regularized systems for $\varepsilon > 0$. In section \ref{sec:normalforms} we study normal forms of piecewise smooth vector fields and we investigate which slow-fast normal form can be generated from each PS-normal form. Structurally stable PSVF's and some codimention-1 bifurcations are considered. Finally, in Appendix \ref{sec-trans-func} we show how to build suitable transition functions that are used in our examples.



\section{Preliminaries on piecewise smooth vector fields and geometric singular perturbation theory}\label{sec-pwsvf}

\subsection{Piecewise smooth vector fields}\label{subsec-pwsvf}

\noindent

Let $F:U\subset\mathbb{R}^{n}\rightarrow\mathbb{R}$ be a sufficientlly smooth function and consider $C^{r}$ vector fields $X,Y:U\subset\mathbb{R}^{n}\rightarrow\mathbb{R}^{n}$. A \textit{$C^{r}$ piecewise smooth vector field} $Z:U\subset\mathbb{R}^{n}\rightarrow\mathbb{R}^{n}$ (or PSVF for short) is given by
\begin{equation}\label{piecewise-smooth}
Z(\mathbf{x}) = \displaystyle\frac{1}{2}\Bigg{(}\Big{(}1 + \operatorname{sgn}\big{(}F(\mathbf{x})\big{)}\Big{)}X(\mathbf{x}) +\Big{(}1 - \operatorname{sgn}\big{(}F(\mathbf{x})\big{)}\Big{)}Y(\mathbf{x}) \Bigg{)}
\end{equation}
where $\mathbf{x}\in U$ and we assume that $Z$ is multi-valued in the set
$$\Sigma = \{\mathbf{x}\in U; F(\mathbf{x}) = 0\},$$
which is called \textit{discontinuity locus} or \textit{discontinuity set}. The set of all $C^{r}$ piecewise smooth vector fields is denoted by $\Omega^{r}$. A PSVF is also denoted by $Z = (X,Y)$ in order to emphasize the dependency on the smooth vector fields $X$ and $Y$.

The \textit{Lie derivative} of $F$ with respect to the vector field $X$ is given by $XF = \langle X,\nabla F \rangle$ and $X^{i}F = \langle X,\nabla X^{i-1}F \rangle$ for all integer $i\geq 2$. This allows us to define the following regions in $\Sigma$:
\begin{enumerate}
  \item \textit{Filippov sewing region}: $$\Sigma^{w} = \big{\{}\mathbf{x}\in\Sigma \ | \ XF(\mathbf{x})\cdot YF(\mathbf{x})  > 0\big{\}};$$
  \item \textit{Filippov sliding region}: $$\Sigma^{s} = \big{\{}\mathbf{x}\in\Sigma\ | \  XF(\mathbf{x})  < 0,  YF(\mathbf{x})  > 0\big{\}};$$
  \item \textit{Filippov escaping region}: $$\Sigma^{e} =\big{\{}\mathbf{x}\in\Sigma \ | \  XF(\mathbf{x})  > 0,  YF(\mathbf{x})  < 0\big{\}}.$$
\end{enumerate}

We emphasize that in the literature these sets are simply called sewing region, sliding region and escaping region, respectively. Nevertheless, in \cite{PanazzoloSilva} the authors presented a new definition of such regions, which depends on the type of regularization adopted (see Definitions \ref{def-point-sliding} and \ref{def-point-sewing}). Due to this fact, we will call these regions as \textit{Filippov regions} in order to stress that we are talking about the classical definition of sewing, sliding and escaping. See Figure \ref{fig-def-filippov}.


A point $\mathbf{x}_{0}\in\Sigma$ is a \textit{PS-tangency point} if $XF(\mathbf{x}_{0}) = 0$ or $YF(\mathbf{x}_{0}) = 0$. We say that $\mathbf{x}_{0}$ is a \textit{PS-fold point} of $X$ if $XF(\mathbf{x}_{0}) = 0$ and $X^{2}F(\mathbf{x}_{0})\neq 0$. If $X^{2}F(\mathbf{x}_{0}) > 0$, $\mathbf{x}_{0}$ is a \textit{PS-visible fold} of $X$ and if $X^{2}F(\mathbf{x}_{0}) < 0$ we say that $\mathbf{x}_{0}$ is an \textit{PS-invisible fold} of $X$. Analogously we define PS-tangency points and PS-fold points of $Y$. Note that if $Y^{2}F(\mathbf{x}_{0}) < 0$, $\mathbf{x}_{0}$ is a PS-visible fold of $Y$ and if $Y^{2}F(\mathbf{x}_{0}) > 0$ the point $\mathbf{x}_{0}$ is and PS-invisible fold of $Y$. If $\mathbf{x}_{0}$ is a PS-fold of both $X$ and $Y$, we say that $\mathbf{x}_{0}$ is a \textit{PS-fold-fold}. Finally, we say that $\mathbf{x}_{0}\in\Sigma$ is a \textit{PS-cusp point} if $XF(\mathbf{x}_{0}) = X^{2}F(\mathbf{x}_{0}) = 0$  and $X^{3}F(\mathbf{x}_{0}) \neq 0$.

Singularities of slow-fast systems will be discussed later. Throughout this paper, a singularity of a PSVF will be called \textit{PS-singularity}, and a singularity of a slow-fast system when $\varepsilon = 0$ will be called \textit{SF-singularity}.

Following Filippov's convention \cite{Filippov}, one can define a vector field in $\Sigma^{s}\cup\Sigma^{e}\subset\Sigma$. The \textit{Filippov sliding vector field} associated to $Z\in\Omega^{r}$ is the vector field $Z^{\Sigma}:\Sigma\rightarrow\Sigma$ given by
\begin{equation}\label{sliding-vector-field}
Z^{\Sigma}(\mathbf{x}) = \displaystyle\frac{1}{YF - XF}\Big{(}X\cdot YF - Y\cdot XF\Big{)},
\end{equation}
which is the convex combination between $X$ and $Y$.

\subsection{Linear regularization of piecewise smooth vector fields}\label{subsec-reg}
\noindent

The regularization process proposed by Sotomayor and Teixeira in \cite{SotoTeixeira} is a powerfull tool in the study of piecewise smooth vector fields. With this technique, it is possible to construct a family of smooth vector fields $\{Z_{\varepsilon}\}_{\varepsilon}$ such that $Z_{\varepsilon}\rightarrow Z_{0} = Z$ when $\varepsilon\rightarrow 0$.

We say that $\varphi:\mathbb{R}\rightarrow\mathbb{R}$ is a \textit{transition function} if the following conditions are satisfied:
\begin{enumerate}
  \item $\varphi$ is sufficiently smooth;
  \item $\varphi(t) = -1$ if $t \leq -1$ and $\varphi(t) = 1$ if $t \geq 1$;
  \item $\varphi'(t) > 0$ if $s\in (-1,1)$. This condition is called \textit{monotonicity}.
\end{enumerate}

Throughout this paper it will be clear that, by dropping the monotonicity condition, it is possible to obtain different critical manifolds of the slow-fast system associated to the regularization. Moreover, non monotonic transition functions can expand the Filippov sliding region in $\Sigma$ (see \cite{PanazzoloSilva} and Theorem \ref{mtheorem-A} below).

\begin{definition}
Let $\varphi$ be a transition function. A linear regularization of a piecewise smooth vector field $Z = (X,Y)$ is an one-parameter family $Z_{\varepsilon}$ of smooth vector fields given by
\begin{equation}\label{reg-vector-field}
Z_{\varepsilon}(\mathbf{x}) = \Bigg{(}\displaystyle\frac{1}{2} + \displaystyle\frac{\varphi_{\varepsilon}\big{(}F(\mathbf{x})\big{)}}{2}\Bigg{)}X(\mathbf{x}) + \Bigg{(}\displaystyle\frac{1}{2} - \displaystyle\frac{\varphi_{\varepsilon}\big{(}F(\mathbf{x})\big{)}}{2}\Bigg{)}Y(\mathbf{x});
\end{equation}
with $\varphi_{\varepsilon}(s) = \varphi\Big{(}\displaystyle\frac{s}{\varepsilon}\Big{)}$ for $\varepsilon > 0$. When $\varphi$ is monotonic, we say that \eqref{reg-vector-field} is the ST-regularization (Sotomayor--Teixeira Regularization) of $Z$.
\end{definition}

Intuitively, regularizing piecewise smooth vector field means to replace the discontinuity set $\Sigma$ by a stripe (a tubular neighbourhood of $\Sigma$) of width $2\varepsilon$. Outside this stripe, the vector fields $Z_{\varepsilon}$ and $Z$ coincide, and inside the stripe the vector field $Z_{\varepsilon}$ can be seen as the ``average'' between $X$ and $Y$.

\begin{figure}[h]
  \center{\includegraphics[width=0.8\textwidth]{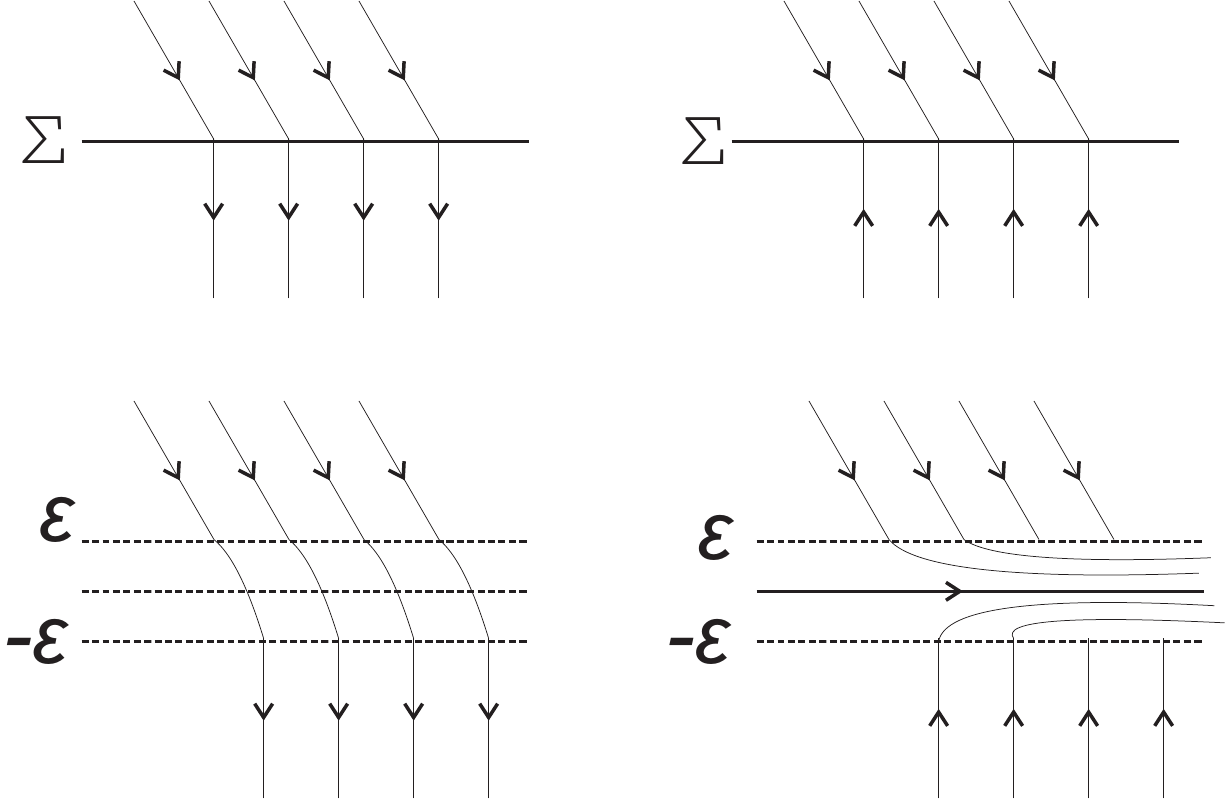}}\\
  \caption{\footnotesize{Regularization of a Filippov sewing region (left) and a Filippov sliding region (right).}}
   \label{fig-def-filippov}
\end{figure}

\subsection{Nonlinear regularization of piecewise smooth vector fields}\label{subsec-non-linear-regularizations}
In \cite{NovaesJeffrey,SilvaSarmientoNovaes} the authors considered another way to generalize the notions of sliding region and sliding vector field by means of \emph{nonlinear regularizations}.

\begin{definition}\label{def:nolinear}
A regularization $Z_{\varepsilon}$ between $X$ and $Y$ is called nonlinear if there exists a 1-parameter family of smooth vector fields
$\widetilde{Z}(\lambda,.)$, with $\lambda\in [-1, 1]$, such that
$\widetilde{Z}(-1, p) = Y(p)$, $\widetilde{Z}(1, p) = X(p)$ and $Z_{\varepsilon}(p) \in \{\widetilde{Z}(\lambda, p), \lambda\in [-1, 1]\}$, $\forall p \in U$.
\end{definition}

Analogously, we define the $\varphi-$nonlinear regularization of $ X$ and $Y$.

\begin{definition}\label{def:phinolinear}
A $\varphi-$nonlinear regularization of $X$ and $Y$ is the 1-parameter family given by $Z_{\varepsilon}(p) = \widetilde{Z}(\varphi(\frac{F}{\varepsilon}), p)$.
\end{definition} 
Recall that if $F > \varepsilon$, then $\varphi(\frac{F}{\varepsilon}) = 1$ and $Z_{\varepsilon} = X$; and if $F < -\varepsilon$, then $\varphi(\frac{F}{\varepsilon}) = -1$ and $Z_{\varepsilon} = Y$ (see Figure \ref{fig-nolinear}). 

\begin{figure}[h!]
  \center{\includegraphics[width=0.35\textwidth]{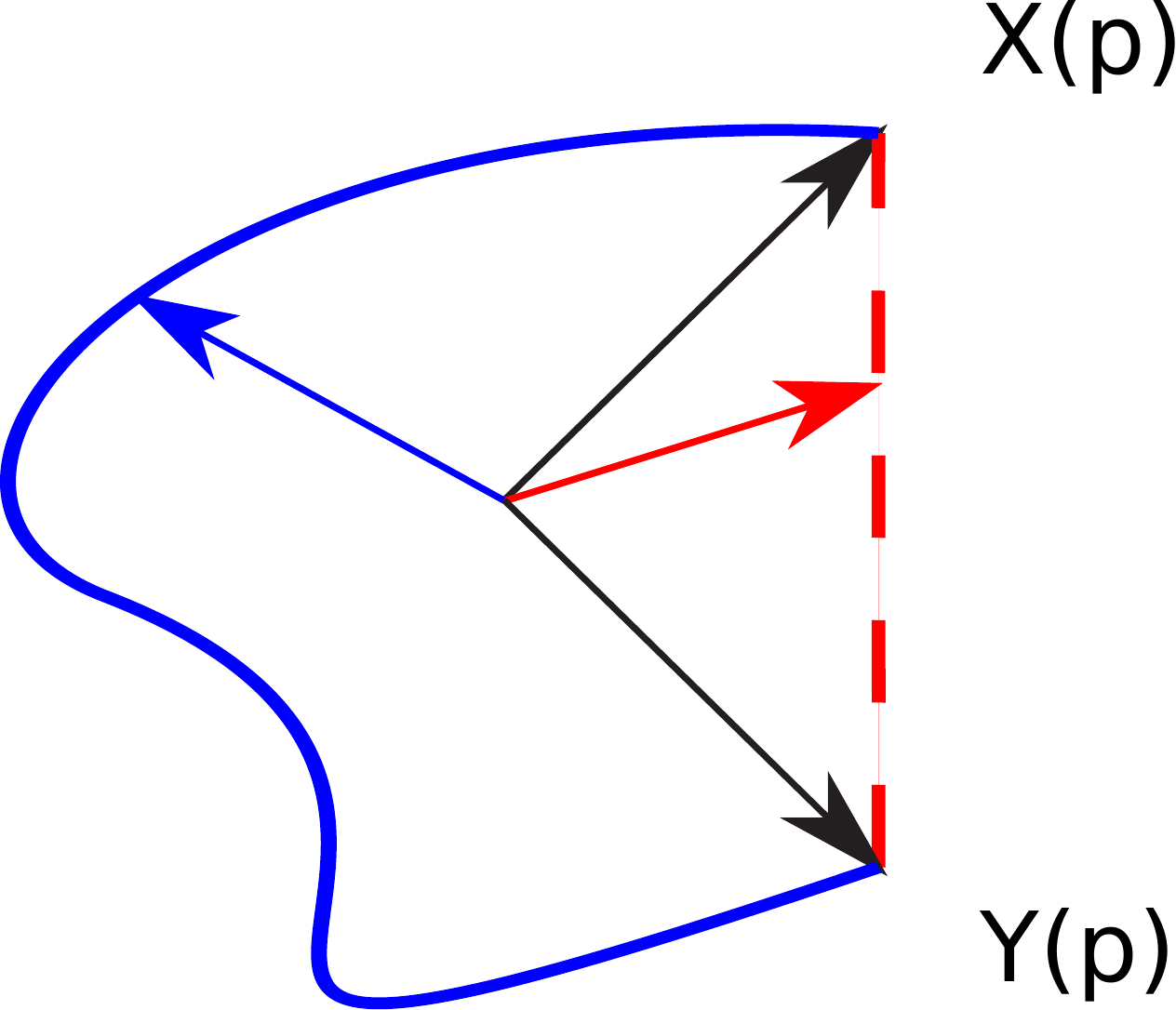}}\\
  \caption{\footnotesize{Linear (red) and nonlinear (blue) regularizations.}}
  \label{fig-nolinear}
\end{figure}

In \cite[Theorem 1]{NovaesJeffrey}, it was shown that a non monotonic linear regularization \eqref{reg-vector-field} can be transformed into a monotonic nonlinear regularization. However, in general it is not true that non monotonic linear regularizations are equivalent to monotonic nonlinear regularizations (see Theorems \ref{teob} and \ref{teoc}).

\subsection{Geometric singular perturbation theory}\label{subsec-gspt}

\noindent

In the 1970s, Neil Fenichel wrote several papers on invariant manifold theory, which allowed a rigorous study of slow-fast systems (i.e., systems of differential equations with multiple time scales). We refer to \cite{Jones, Kaper, Wiggins} for a careful introduction on slow-fast systems, as well as details of the proof given in Fenichel's original paper \cite{Fenichel}. The book \cite{Kuehn} contains introductory notions, applications and more sophisticated concepts on this subject. For applications in Biology, see \cite{Hek} and the references therein. Finally, see \cite{CardinTeixeira} for results concerning geometric singular perturbation theory for systems with many time scales.

A system of the form
\begin{equation}\label{eq-def-slowfast-1}
    \varepsilon\dot{\mathbf{x}}  =  f(\mathbf{x},\mathbf{y},\varepsilon); \ \ \
    \dot{\mathbf{y}}  =  g(\mathbf{x},\mathbf{y},\varepsilon);
\end{equation}
is called \textit{slow-fast system}, where $\mathbf{x}\in\mathbb{R}^{m}$, $\mathbf{y}\in\mathbb{R}^{n}$, $0 < \varepsilon \ll 1$ and $f:\mathbb{R}^{m}\times\mathbb{R}^{n}\times \mathbb{R}\rightarrow \mathbb{R}^{m}$, 
$g:\mathbb{R}^{m}\times\mathbb{R}^{n}\times \mathbb{R}\rightarrow \mathbb{R}^{n}$ are sufficiently smooth. The dot $\cdot$ represents the derivative of the functions $\mathbf{x}(\tau)$ and $\mathbf{y}(\tau)$ with respect to the variable $\tau$.

If we write $t = \displaystyle\frac{\tau}{\varepsilon}$, then system \eqref{eq-def-slowfast-1} becomes
\begin{equation}\label{eq-def-slowfast-2}
    \mathbf{x}' =  f(\mathbf{x},\mathbf{y},\varepsilon); \ \ \
    \mathbf{y}' = \varepsilon g(\mathbf{x},\mathbf{y},\varepsilon);
\end{equation}
in which the apostrophe ' denotes the derivative of the functions $\mathbf{x}(t)$ and $\mathbf{y}(t)$ with respect to the variable $t$. Observe that the parameter $\varepsilon = \displaystyle\frac{\tau}{t}$ represents the ratio of the time scales.

Consider equation \eqref{eq-def-slowfast-1} and set $\varepsilon = 0$. We obtain the so called \textit{slow system} given by
\begin{equation}\label{eq-def-slow-system}
    0 = f(\mathbf{x},\mathbf{y},0); \ \ \
    \dot{\mathbf{y}} = g(\mathbf{x},\mathbf{y},0).
\end{equation}

This equation is also known in the literature as \textit{reduced problem} or \textit{slow vector field}. Note that \eqref{eq-def-slow-system} is not an ODE, but it is an \textit{algebraic differential equation} (ADE).

Solutions of \eqref{eq-def-slow-system} are contained in the set
$$C_{0} = \Big{\{}(\mathbf{x},\mathbf{y})\in\mathbb{R}^{m}\times\mathbb{R}^{n}: \ f(\mathbf{x},\mathbf{y},0) = 0\Big{\}}.$$

\begin{definition}
The set $C_{0}$ is called critical set. In the case where $C_{0}$ is a manifold, $C_{0}$ is called critical manifold.
\end{definition}

On the other hand, setting $\varepsilon = 0$ in equation \eqref{eq-def-slowfast-2} we obtain the so called \textit{fast system}
\begin{equation}\label{eq-def-fast-system}
    \mathbf{x}' = f(\mathbf{x},\mathbf{y},0); \ \ \
    \mathbf{y}' = 0.
\end{equation}

System \eqref{eq-def-fast-system} is also known in the literature as \textit{layer problem}, \textit{layer equation} or \textit{fast vector field}. Moreover, the system \eqref{eq-def-fast-system} can be seen as a system of ordinary differential equations, where $\mathbf{y}\in\mathbb{R}^{n}$ is a parameter and the critical set $C_{0}$ is a set of equilibrium points of \eqref{eq-def-fast-system}.

The main goal of geometric singular perturbation theory is to study systems \eqref{eq-def-slow-system} and \eqref{eq-def-fast-system} in order to obtain information of the full system \eqref{eq-def-slowfast-1}. Observe that the systems \eqref{eq-def-slowfast-1} and \eqref{eq-def-slowfast-2} are equivalent when $\varepsilon > 0$, since they only differ by time scale.



\begin{definition}\label{delNH}
Let $\mathbf{x}_{0}\in S$, for any set $S\subset\mathbb{R}^{m+n}$. We say that $\mathbf{x}_{0}$ is normally hyperbolic if the $m\times m$ matrix $Df_{\mathbf{x}}(\mathbf{x}_{0})$ does not have eigenvalues with zero real part. The set of all normally hyperbolic points of $S$ will be denoted by $\mathcal{NH}(S)$.
\end{definition}



The nomenclature \emph{PS-singularity} and \emph{SF-singularity} is adopted in order to emphasize when $p$ is a singularity of the piecewise smooth vector field \eqref{piecewise-smooth} or a singularity of the slow-fast system \eqref{eq-def-slowfast-1} when $\varepsilon = 0$.

\subsection{Normal forms of slow-fast systems}\label{sub-sec-normal-forms-sf}

\noindent

In what follows we briefly recall some normal forms of slow-fast systems. An overview on this subject can be found in Chapter 4 of \cite{Kuehn}, and the reader can see the references therein for further details of the proofs. The normal forms of planar SF-generic transcritical and SF-generic pitchfork singularities were given in \cite{KrupaSzmolyan2}.

We say that the critical manifold $C_{0} = \{f(x,y,0) = 0\}$ has a \textit{planar SF-generic fold} (or SF-fold for short) at the origin if
\begin{equation}\label{eq-sing-fold-non-degeneracy-conditions}
\begin{split}
    f_{x}(0,0,0) = 0; \ \ f_{xx}(0,0,0) \neq 0; \\  f_{y}(0,0,0) \neq 0 \ \ \text{and} \ \ g(0,0,0)\neq 0.
\end{split}
\end{equation}

In order to obtain a \textit{SF-generic transcritical singularity} at the origin, the planar slow-fast system \eqref{eq-def-slowfast-1} must satisfy the following conditions:
\begin{equation}\label{eq-sing-transcritical-non-degeneracy-conditions}
\begin{split}
f(0,0,0) = f_{x}(0,0,0) = f_{y}(0,0,0) = 0; \\
\det\operatorname{Hes}(f) < 0; \ \ f_{xx}(0,0,0)\neq 0\neq g(0,0,0);
\end{split}
\end{equation}
where $\operatorname{Hes}(f)$ denotes the Hessian matrix of $f$, and $f_{xx}$ denotes the second derivative of $f$ with respect to the $x$ variable.

On the other hand, in order to obtain a \textit{SF-generic pitchfork singularity} at the origin we must require the following conditions:
\begin{equation}\label{eq-sing-pitchfork-non-degeneracy-conditions}
\begin{split}
f(0,0,0) = f_{x}(0,0,0) = f_{xx}(0,0,0) = f_{y}(0,0,0) = 0; \\
f_{xxx}(0,0,0) \neq 0, \ \ f_{xy}(0,0,0) \neq 0, \ \ g(0,0,0)\neq 0.
\end{split}
\end{equation}



\begin{theorem}\label{teo-fenichel-normal-form-fold}
There exists a smooth change of coordinates such that for $(x,y)$ sufficiently small the System \eqref{eq-def-slowfast-2} can be written as
\begin{description}
    \item[(a)] If the slow-fast system \eqref{eq-def-slowfast-2} satisfies the non-degeneracy conditions \eqref{eq-sing-fold-non-degeneracy-conditions} of a planar SF-generic fold:
    \begin{equation}\label{eq-teo-fenichel-normal-form-fold}
   x' = y + x^{2} + \mathcal{O}(x^{3}, xy, y^{2}, \varepsilon); \ \ \ 
   y' = \varepsilon\Big{(}\pm 1 + \mathcal{O}(x,y,\varepsilon)\Big{)}.
\end{equation}

    \item[(b)] If the slow-fast system \eqref{eq-def-slowfast-2} satisfies the non-degeneracy conditions \eqref{eq-sing-transcritical-non-degeneracy-conditions} of a SF-generic transcritical singularity: \begin{equation}\label{eq-normal-form-transcritical}
    x' = x^{2} - y^{2} + \lambda\varepsilon + \mathcal{O}(x^{3}, x^{2}y, xy^{2},y^{3},\varepsilon); \ \ \ 
    y' = \varepsilon\Big{(}1 + \mathcal{O}(x,y,\varepsilon)\Big{)}.
\end{equation}

    \item[(c)] If the slow-fast system \eqref{eq-def-slowfast-2} satisfies the non-degeneracy conditions \eqref{eq-sing-pitchfork-non-degeneracy-conditions} of a SF-pitchfork singularity: \begin{equation}\label{eq-normal-form-pitchfork}
    x' = x(y - x^{2}) + \lambda\varepsilon + \mathcal{O}(x^{2}y, xy^{2},y^{3},\varepsilon); \ \ \ 
    y' = \varepsilon\Big{(}\pm 1 + \mathcal{O}(x,y,\varepsilon)\Big{)}.
\end{equation}

\item[(d)] If $0\in C_{0}$ is a normally hyperbolic point:
\begin{equation}\label{eq-teo-fenichel-normal-form}
    \left\{
  \begin{array}{rcl}
   x_{1}' & = & \Lambda_{1}(\mathbf{x},\mathbf{y},\varepsilon)x_{1}; \\
   x_{2}' & = & \Lambda_{2}(\mathbf{x},\mathbf{y},\varepsilon)x_{2}; \\
   \mathbf{y}' & = & \varepsilon\Big{(}h(\mathbf{y},\varepsilon) + H(x_{1},x_{2},\mathbf{y},\varepsilon)(x_{1},x_{2})\Big{)};
  \end{array}
\right.
\end{equation}
where $\mathbf{x} = (x_{1},x_{2})$ is sufficiently small, $\Lambda_{j}$ (for $j = 1,2$), $h$ and $H$ are $C^{r-1}$ in all arguments. Moreover, $\Lambda_{1}(\mathbf{x},\mathbf{y},\varepsilon)$ is a matrix whose eigenvalues have positive real part, $\Lambda_{2}(\mathbf{x},\mathbf{y},\varepsilon)$ is a matrix whose eigenvalues have negative real part, and $H(x_{1},x_{2},y,\varepsilon)$ is bilinear when applied to $(x_{1},x_{2})$.
\end{description}
\end{theorem}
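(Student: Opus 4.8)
The plan is to treat the four cases by a single mechanism: reduce the planar slow–fast system \eqref{eq-def-slowfast-2} to the prescribed model by a finite sequence of smooth, $\varepsilon$-dependent coordinate changes together with the parametrized Malgrange preparation theorem, keeping careful track of how the non-degeneracy hypotheses survive each step. All four reductions are classical, so below I indicate the main steps and the references; the purpose of the statement here is to collect the normal forms in the form we shall use.

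For item (d), since $0\in C_{0}$ is normally hyperbolic the matrix $D_{\mathbf{x}}f(0,0,0)$ is invertible, so the implicit function theorem yields a smooth $\mathbf{m}(\mathbf{y},\varepsilon)$, defined near $0$, with $f(\mathbf{m}(\mathbf{y},\varepsilon),\mathbf{y},\varepsilon)\equiv 0$; the translation $\mathbf{x}\mapsto\mathbf{x}-\mathbf{m}(\mathbf{y},\varepsilon)$ makes $f$ vanish identically on $\{\mathbf{x}=0\}$, and Hadamard's lemma then gives $f=A(\mathbf{x},\mathbf{y},\varepsilon)\mathbf{x}$ with $A(0,0,0)=D_{\mathbf{x}}f(0,0,0)$ hyperbolic. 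Using the spectral gap of $A(0,0,0)$ one block-diagonalizes $A$, smoothly in all arguments, into an unstable block $\Lambda_{1}$ and a stable block $\Lambda_{2}$ (that is, one solves a Sylvester-type equation whose coefficients depend smoothly on $(\mathbf{x},\mathbf{y},\varepsilon)$, at the cost of one derivative). Finally, in the slow equation one writes $g(\mathbf{x},\mathbf{y},\varepsilon)=h(\mathbf{y},\varepsilon)+L(\mathbf{y},\varepsilon)\mathbf{x}+(\text{terms bilinear in }\mathbf{x})$ and removes the linear part $L\mathbf{x}$ by a near-identity change of the slow variable $\mathbf{y}$; what remains is $\varepsilon\big(h+H(x_{1},x_{2},\mathbf{y},\varepsilon)(x_{1},x_{2})\big)$ with $H$ bilinear, which is exactly \eqref{eq-teo-fenichel-normal-form}. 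This is Fenichel's normal form; see \cite{Fenichel}, \cite{Jones} and \cite{Kuehn}.

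For items (a)–(c), the conditions \eqref{eq-sing-fold-non-degeneracy-conditions}, \eqref{eq-sing-transcritical-non-degeneracy-conditions} and \eqref{eq-sing-pitchfork-non-degeneracy-conditions} fix the contact order of $x\mapsto f(x,\mathbf{y},\varepsilon)$ at the origin (order $2$ for the fold and transcritical cases, order $3$ for the pitchfork), so the parametrized Malgrange preparation theorem writes $f=U\cdot P$ with $U$ a nonvanishing unit and $P$ monic in $x$ of that degree. For the fold, completing the square eliminates the linear coefficient of $P$, the condition $f_{y}(0,0,0)\neq 0$ lets the constant coefficient of $P$ play the role of $y$, and dividing the fast equation by $U$ (with the sign $\pm 1$ chosen accordingly) together with $g(0,0,0)\neq 0$ rescales the slow equation to $\varepsilon(\pm 1+\mathcal{O}(\cdot))$, producing \eqref{eq-teo-fenichel-normal-form-fold}. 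For the transcritical case the extra vanishing $f_{y}(0,0,0)=0$ together with $\det\operatorname{Hes}(f)<0$ makes the $2$-jet of $f$ in $(x,y)$ a nondegenerate indefinite quadratic form, which a linear change in $(x,y)$ turns into $x^{2}-y^{2}$; the term $\lambda\varepsilon$ records the value of $f$ along the critical fiber, and the remaining normalization is as above, yielding \eqref{eq-normal-form-transcritical}. For the pitchfork, $f_{x}=f_{xx}=f_{y}=0$ with $f_{xxx}\neq 0$ and $f_{xy}\neq 0$ give contact order $3$ in $x$ and a transversal dependence on $y$, and after preparation and coordinate changes $f$ becomes $x(y-x^{2})$ with unfolding term $\lambda\varepsilon$, which is \eqref{eq-normal-form-pitchfork}. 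These computations are carried out in \cite{KrupaSzmolyan2} and summarized in \cite[Ch.~4]{Kuehn}.

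I expect the main difficulty to be bookkeeping rather than conceptual: one must check that every coordinate change and every reparametrization of $\varepsilon$ preserves the slow–fast form \eqref{eq-def-slowfast-2} (with the same $\varepsilon$, up to an admissible rescaling of time), that the non-degeneracy conditions are invariant under these changes so that the successive normalizations are compatible, and — in item (d) — that the smooth block-diagonalization and the removal of the linear-in-$\mathbf{x}$ term in the slow equation retain the stated regularity $C^{r-1}$. Since all four statements are available in the cited literature, I would present the argument as a guided sketch with precise references rather than reproving the preparation theorem or Fenichel's invariant manifold theorem from scratch.
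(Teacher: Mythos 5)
The paper does not prove this theorem at all: it is stated as a recollection of known results, with the fold case attributed to Fenichel/Krupa--Szmolyan and the transcritical and pitchfork normal forms to \cite{KrupaSzmolyan2}, exactly the sources you cite. Your guided sketch is therefore the same approach as the paper's (deferral to the literature), and for items (a)--(c) your outline via the parametrized preparation theorem and the non-degeneracy conditions is the standard and correct one.

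One caveat on item (d): as literally written, your derivation of the slow equation would not produce the stated form. Removing only the part of $g$ that is linear in $\mathbf{x}$ by a near-identity change of $\mathbf{y}$ leaves quadratic terms such as $x_{1}^{2}$ and $x_{2}^{2}$, which are \emph{not} of the form $H(x_{1},x_{2},\mathbf{y},\varepsilon)(x_{1},x_{2})$ with $H$ bilinear in the pair $(x_{1},x_{2})$ (that form contains only cross terms, one factor from each block). The bilinear structure is not an elementary Taylor-expansion fact; it encodes the invariance of the stable and unstable foliations of the critical manifold and is obtained by straightening those foliations, i.e.\ by genuine Fenichel theory as in \cite{Jones}. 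Since you defer to \cite{Fenichel}, \cite{Jones} and \cite{Kuehn} for the full argument this is a presentational slip rather than a fatal error, but the phrase ``what remains is \dots with $H$ bilinear'' should not be justified by the linear-term removal alone.
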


\section{Regularizations and typical SF-singularities}\label{mainresults}
\noindent
The relation between (\emph{linear}) regularization of piecewise smooth vector fields and slow-fast systems had led mathematicians in a new direction in the research in qualitative theory of ordinary differential equations. By applying a directional blow-up, it is possible to transform a (\emph{linearly}) regularized vector field into a slow-fast system. This approach was used for the first time in \cite{BuzziSilvaTeixeira} in the context of planar piecewise smooth vector fields, and lately by \cite{LlibreSilvaTeixeira} in the $3$-dimensional case. The $n$-dimensional case was discussed in \cite{LlibreSilvaTeixeira2}.

This study starts considering a planar piecewise smooth vector field whose discontinuity set is a smooth curve and \emph{linear} regularizations. Without loss of generality, we adopt a coordinate system such that $Z=(X(f_1,f_2),Y(g_1,g_2))$ is written as
\begin{equation}\label{eq-piecewise-smooth}
\dot{z} = Z(z)  = \dfrac{1 + \operatorname{sgn}(x)}{2}X(z) + \dfrac{1 - \operatorname{sgn}(x)}{2}Y(z),\hspace{0.4cm} z=(x,y)
\end{equation}
that is, the discontinuity set is a straight line. A linear regularization of \eqref{eq-piecewise-smooth} is the family
\begin{equation}\label{eq-piecewise-smooth-regularized}
\dot{z}= Z_{\varepsilon}(z) = \dfrac{1 + \varphi(x/\varepsilon)}{2}X(z) +\dfrac{1 - \varphi(x /\varepsilon)}{2}Y(z), 
\end{equation}
where $X=(f_{1},f_{2})$, $Y=(g_{1},g_{2})$ are applied in $z=(x,y)$. We emphasize that in this study the transition function $\varphi$ is not necessarily monotonic.

After a directional blow-up of the form $(\widetilde{x},\widetilde{y},\varepsilon) \mapsto (\varepsilon \widetilde{x}, \widetilde{y}, \varepsilon)$, one obtains the slow-fast system (dropping the tildes in order to simplify the notation)
\begin{equation}\label{eq-slow-fast-pwsvf-planar}
   \varepsilon\dot{x} = \displaystyle\frac{f_{1} + g_{1}}{2} + \varphi(x)\Bigg{(}\displaystyle\frac{f_{1} - g_{1}}{2}\Bigg{)}; \ \ \ 
   \dot{y} = \displaystyle\frac{f_{2} + g_{2}}{2} + \varphi(x)\Bigg{(}\displaystyle\frac{f_{2} - g_{2}}{2}\Bigg{)};
\end{equation}
where $f_{1}$, $f_{2}$, $g_{1}$, $g_{2}$ are applied in $(\varepsilon x,y)$. Denote the critical set of \eqref{eq-slow-fast-pwsvf-planar} by $C_{0}$. Now, we recall the definitions of sliding and sewing points presented in \cite{PanazzoloSilva}.

\begin{definition}\label{def-point-sliding}
A point $p\in\Sigma$ is a point of sliding (point of escaping) if there is an open set $U\ni p$ and a family of smooth manifolds $S_{\varepsilon}\subset U$ such that
\begin{enumerate}
    \item For each $\varepsilon$, $S_{\varepsilon}$ is invariant by system \eqref{eq-piecewise-smooth-regularized};
    \item For each compact subset $K\subset U$, the sequence $S_{\varepsilon}\cap K$ converges to $\Sigma\cap K$ as $\varepsilon \rightarrow 0$ according to Hausdorff distance.
\end{enumerate}
\end{definition}

\begin{definition}\label{def-point-sewing}
A point $p\in\Sigma$ is a point of sewing if there is an open set $U\ni p$ and local coordinates defined in $U$ such that
\begin{enumerate}
    \item $\Sigma = \{x = 0\}$;
    \item For each $\varepsilon > 0$, the vector field $\frac{\partial}{\partial x}$ is a generator of \eqref{eq-piecewise-smooth-regularized} in $U$.
\end{enumerate}
\end{definition}

Concerning linear regularizations, if the transition function is monotonic and the discontinuity set is smooth, the dynamics of the sliding vector field according to Filippov's convention is equivalent to the dynamics of the slow system associated. However, if we do not consider monotonic transition functions, one can obtain different dynamics of the (\emph{linearly}) regularized vector field and consequently different singular perturbation problems, which can lead us to different definitions of sliding (escaping) or sewing regions. See \cite{PanazzoloSilva, SilvaSarmientoNovaes} and Theorem \ref{mtheorem-A} below. \emph{Nonlinear} regularizations also lead us to different notions of sewing and sliding. See \cite{NovaesJeffrey,SilvaSarmientoNovaes}.

From the definitions, it is clear that different (linear or nonlinear) regularizations lead to different slow-fast systems, which gives rise to different sliding, escaping or sewing regions. In order to emphasize the dependency of the regularization adopted, we will call these sets as the \textit{$r$-Sliding}, \textit{$r$-Escaping} and \textit{$r$-Sewing regions}, and we will denote them as $\Sigma^{s}_{r}$, $\Sigma^{e}_{r}$ and $\Sigma^{w}_{r}$ respectively.

Consider the Filippov sliding vector field $Z^{\Sigma}$ associated to the PSVF \eqref{eq-piecewise-smooth}. Although in the literature it is only considered the dynamics of $Z^{\Sigma}$ in the Filippov sliding or escaping regions, the domain $D\Big{(}Z^{\Sigma}\Big{)}\subset\Sigma$ of $Z^{\Sigma}$ may be greater than $\Sigma^{s}\cup\Sigma^{e}$. In this sense, for our purposes, the domain $D\Big{(}Z^{\Sigma}\Big{)}$ of $Z^{\Sigma}$ is the subset of $\Sigma$ in which $Z^{\Sigma}$ is well defined.

\begin{mtheorem}\label{mtheorem-A}
Consider the PSVF \eqref{eq-piecewise-smooth} and denote its Filippov sliding vector field by $Z^{\Sigma}$, which domain is the set $D\Big{(}Z^{\Sigma}\Big{)}\subset\Sigma$. Consider linear regularization of $Z$ and let $\varphi$ be a transition function, not necessarily monotonic. Let $\Pi:\mathbb{R}^{2}\rightarrow \Sigma$ be the canonical projection. Then the following hold:
\begin{description}
    \item[(a)] If $x_{0}\in (-1,1)$ satisfies $\varphi'(x_{0}) = 0$, then the set of points $(0,y)$ such that
    $$f_{1}(0,y) + g_{1}(0,y) + \varphi(x_{0})\left(f_{1}(0,y) - g_{1}(0,y)\right) = 0$$
    is contained in $C_{0}\backslash\mathcal{NH}\Big{(}C_{0}\Big{)}$. In other words, critical points of $\varphi$ gives rise to non normally hyperbolic points of the critical set $C_{0}$ of \eqref{eq-slow-fast-pwsvf-planar}.
    \item[(b)] If $x_{0}\in (-1,1)$ satisfies $|\varphi(x_{0})| > 1$, then $\Pi\Big{(}C_{0}\Big{)}\cap\Sigma^{w}\neq \emptyset$. Moreover, $\Sigma^{s}\varsubsetneq\Sigma^{s}_{r}$. In other words, the $r$-sliding region is greater than the classical Filippov sliding region.

    \item[(c)] In the points where $f_{1}(0,y_{0})\neq g_{1}(0,y_{0})$, the dynamics in $\Sigma^{s}_{r}\cup\Sigma^{e}_{r}$ is given by $Z^{\Sigma}$. In other words, the dynamics in the classical Filippov sliding region is naturally extended to the $r$-sliding region using the Filippov sliding vector field. Moreover, $(x_{0},y_{0})$ is an SF-equilibrium point of \eqref{eq-slow-fast-pwsvf-planar} if, and only if, $(0,y_{0})$ is an equilibrium point of $Z^{\Sigma}$.
    \item[(d)] If
    $$\Pi\Big{(}C_{0}\Big{)}\cap\Bigg{(}\Sigma\backslash D\Big{(}Z^{\Sigma}\Big{)}\Bigg{)}\neq \emptyset,$$
    then $(0,y_{0})\in\Pi\Big{(}C_{0}\Big{)}\cap\Bigg{(}\Sigma\backslash D\Big{(}Z^{\Sigma}\Big{)}\Bigg{)}$ is a tangency point for both vector fields $X$ and $Y$ simultaneously, and the line $\{y = y_{0}\}$ is a component of $C_{0}$. See Figure \ref{fig-teo-a-itemd}.
\end{description}
\end{mtheorem}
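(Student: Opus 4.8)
\emph{Proof proposal.} The plan is to make the two sets in the hypothesis explicit and then combine them through a one-line identity. In \eqref{eq-piecewise-smooth} the discontinuity locus is $\Sigma=\{x=0\}$, so $F(x,y)=x$ and, for $p=(0,y)\in\Sigma$, the Lie derivatives are $XF(p)=f_{1}(0,y)$ and $YF(p)=g_{1}(0,y)$. The Filippov sliding vector field \eqref{sliding-vector-field} has denominator $YF-XF=g_{1}(0,y)-f_{1}(0,y)$, hence it is well defined precisely where this quantity is nonzero; therefore
\[
\Sigma\setminus D\big(Z^{\Sigma}\big)=\big\{(0,y)\in\Sigma:\ f_{1}(0,y)=g_{1}(0,y)\big\}.
\]
On the other hand, putting $\varepsilon=0$ in \eqref{eq-slow-fast-pwsvf-planar}, the critical set is
\[
C_{0}=\Big\{(x,y):\ \tfrac{1}{2}\big(f_{1}(0,y)+g_{1}(0,y)\big)+\varphi(x)\,\tfrac{1}{2}\big(f_{1}(0,y)-g_{1}(0,y)\big)=0\Big\},
\]
so $\Pi(C_{0})$ consists of the points $(0,y)$ for which this equation admits at least one solution $x$.

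Next I would run the core step. Let $(0,y_{0})\in\Pi(C_{0})\cap\big(\Sigma\setminus D(Z^{\Sigma})\big)$, and set $a:=f_{1}(0,y_{0})=g_{1}(0,y_{0})$ (using membership in $\Sigma\setminus D(Z^{\Sigma})$). By membership in $\Pi(C_{0})$ there is some $x_{0}$ with $\tfrac{1}{2}(a+a)+\varphi(x_{0})\tfrac{1}{2}(a-a)=0$, i.e.\ $a=0$. Thus $f_{1}(0,y_{0})=g_{1}(0,y_{0})=0$, that is $XF(0,y_{0})=YF(0,y_{0})=0$, so $(0,y_{0})$ is simultaneously a tangency point of $X$ and of $Y$, which is the first assertion of (d). Feeding $f_{1}(0,y_{0})=g_{1}(0,y_{0})=0$ back into the equation defining $C_{0}$, the left-hand side is identically zero in $x$; hence the whole horizontal line $\{y=y_{0}\}$ lies in $C_{0}$, i.e.\ it is a component of $C_{0}$.

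I do not expect a genuine obstacle in this item: once one records that $F(x,y)=x$ and that the denominator in \eqref{sliding-vector-field} is $g_{1}-f_{1}$, the statement reduces to the trivial identity $\tfrac{1}{2}(a+a)+\varphi(x_{0})\tfrac{1}{2}(a-a)=a$. The only points deserving explicit care are (i) the convention that the $0/0$ situation $f_{1}(0,y_{0})=g_{1}(0,y_{0})=0$ is \emph{not} counted inside $D(Z^{\Sigma})$ (so that it genuinely belongs to $\Sigma\setminus D(Z^{\Sigma})$), and (ii) the reading of ``$\{y=y_{0}\}$ is a component of $C_{0}$'' as ``the full line $\{y=y_{0}\}$ is contained in $C_{0}$'', which is exactly what the final substitution yields.
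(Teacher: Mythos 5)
Your argument for item (d) is correct and is essentially identical to the paper's own proof of that item: you identify $\Sigma\setminus D\big(Z^{\Sigma}\big)$ with $\{f_{1}(0,y)=g_{1}(0,y)\}$, use membership in $\Pi(C_{0})$ to force $f_{1}(0,y_{0})=g_{1}(0,y_{0})=0$ (hence a simultaneous tangency of $X$ and $Y$), and observe that the defining equation of $C_{0}$ then vanishes identically in $x$ along $\{y=y_{0}\}$. The two points of care you flag (the $0/0$ convention for $D(Z^{\Sigma})$ and the reading of ``component'') are exactly the right ones.

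The genuine gap is that the statement you were asked to prove is the whole of Theorem~\ref{mtheorem-A}, which has four items, and your proposal addresses only item (d). Items (a)--(c) require separate arguments that do not follow from your ``one-line identity.'' For (a) one Taylor-expands the fast equation of \eqref{eq-slow-fast-pwsvf-planar} in $x$ about the critical point $x_{0}$ of $\varphi$ and observes that the linearization of the fast field at a point of $C_{0}$ is $\tfrac{1}{2}\varphi'(x_{0})\big(f_{1}-g_{1}\big)$, so $\varphi'(x_{0})=0$ kills normal hyperbolicity there. For (b) one solves the critical equation as $g_{1}=a f_{1}$ with $a=\big(\varphi(x_{0})+1\big)/\big(\varphi(x_{0})-1\big)$ and checks that $a>0$ (equivalently $XF\cdot YF>0$, i.e.\ the projected point lies in $\Sigma^{w}$) exactly when $|\varphi(x_{0})|>1$; the strict inclusion $\Sigma^{s}\varsubsetneq\Sigma^{s}_{r}$ then rests on Theorem~4.2 of \cite{PanazzoloSilva}, an external input your proposal never invokes. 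For (c) one solves the first equation of \eqref{eq-slow-fast-pwsvf-planar} for $\varphi(x)$ (possible precisely when $f_{1}(0,y)\neq g_{1}(0,y)$) and substitutes into the slow equation to recover the formula \eqref{sliding-vector-field} for $Z^{\Sigma}$. None of these steps is difficult, but as written your proof establishes only one quarter of the theorem.
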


\begin{proof}
\begin{description}
    \item[(a)]
    Without loss of generality, we suppose that $x_{0} = 0$. Expanding the first equation of \eqref{eq-slow-fast-pwsvf-planar} in Taylor series, one obtains
$$x' = \displaystyle\frac{1}{2}\Big{(}(f_{1} + g_{1}) + \varphi(0)(f_{1} - g_{1})\Big{)} + \displaystyle\frac{1}{2}\Big{(}\varphi'(0)(f_{1} - g_{1})\Big{)}x + \ \dots$$

A point of the form $(0,y,0)$ is normally hyperbolic if, and only if, the following conditions are satisfied:
\begin{equation}\label{eq-conditions-nh}
   (f_{1} + g_{1}) + \varphi(0)(f_{1} - g_{1})  =  0,
   \hspace{0.3cm}\varphi'(0)(f_{1} - g_{1})  \neq  0.
\end{equation}
    
Therefore, if $\varphi'(0) = 0$ (that is, $0$ is a critical point of the transition function), then $(0,y,0)$ is not normally hyperbolic.
    \item[(b)] We already know that $\Sigma^{s}\subset\Sigma^{s}_{r}$ (see Theorem 4.2, \cite{PanazzoloSilva}). Now, we prove that $\Sigma^{s}_{r}$ contains points that do not belong to $\Sigma^{s}$. Suppose without loss of generality that $\varphi(0)\neq 1$. Define the constant $a$ as
$$a = \frac{\varphi(0) + 1}{\varphi(0) - 1} \ \Leftrightarrow \ \varphi(0) = \displaystyle\frac{a + 1}{a - 1}.$$

Then the conditions \eqref{eq-conditions-nh} can be rewritten as
\begin{equation}\label{eq-conditions-sf-normal-hyperbolicity}
   g_{1}  =  af_{1}, \hspace{0.3cm}
   \varphi'(0)  \neq  0,
\end{equation}
where $f_{1}$ and $g_{1}$ are applied in $(0,y)$ and $a \neq 1$. Note that the condition $a \neq 1$ is naturally satisfied with the assumptions above. Observe that $a < 0$ if, and only if, $|\varphi(0)| < 1$. Analogously, it can be checked that $a > 0$ if, and only if, $|\varphi(0)| > 1$. 

Let $\Pi:\mathbb{R}^{2}\rightarrow \Sigma$ be the canonical projection. Assuming $|\varphi(0)| > 1$, the points of $C_{0}$ of the form $(0,y)$ such that $g_{1}(0,y) = a f_{1}(0,y)$ are projected in the Filippov sewing region $\Sigma^{w}$. 

Once again by Theorem 4.2 of \cite{PanazzoloSilva}, we have the inclusion
$$\Sigma^{s}\varsubsetneq \Pi\big{(}C_{0}\big{)}\subset \Sigma^{s}_{r}.$$

This means that $(0,y)\not\in \Sigma^{s}$ is a point of sliding, which implies that $\Sigma^{s}\varsubsetneq\Sigma^{s}_{r}$.

    \item[(c)] From the first equation of \eqref{eq-slow-fast-pwsvf-planar}, we have
    $$\varphi(x) = \displaystyle\frac{g_{1}(0,y) + f_{1}(0,y)}{g_{1}(0,y) - f_{1}(0,y)}.$$
    
    Combining this expression with the second equation of \eqref{eq-slow-fast-pwsvf-planar}, we obtain exactly the expression of $Z^{\Sigma}$. 
    \item[(d)] The domain of $Z^{\Sigma}$ is precisely the set
    $$D\Big{(}Z^{\Sigma}\Big{)} = \{(0,y)\in\Sigma \ ; \ g_{1}(0,y)\neq f_{1}(0,y)\}.$$
    
    If $(0,y_{0})\not\in D\Big{(}Z^{\Sigma}\Big{)}$ and $(0,y_{0})\in\Pi\Big{(}C_{0}\Big{)}$, then $g_{1}(0,y_{0})= f_{1}(0,y_{0})$. From the expression of $C_{0}$, $(0,y_{0})$ must be a tangency point for both $X$ and $Y$. Moreover, the equation
    $f_{1}(0,y_{0}) = 0$
    assures that the horizontal line $\{y = y_{0}\}$ is a component of the critical manifold $C_{0}$. See Figure \ref{fig-teo-a-itemd}.
\end{description}
\end{proof}

\begin{figure}[h]
  \center{\includegraphics[width=0.35\textwidth]{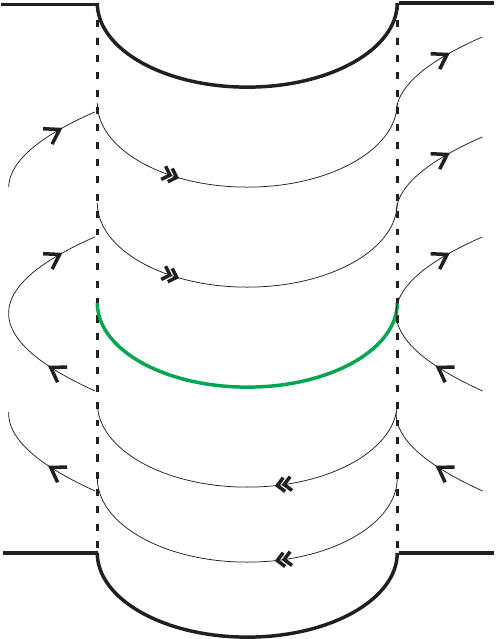}}\\
  \caption{\footnotesize{The critical set $C_{0}$, which is highlighted in green, is a horizontal line connecting two fold points. It is not possible to define dynamics in $\Sigma$ through this points using geometric singular perturbation theory.}}
  \label{fig-teo-a-itemd}
\end{figure}

Item (a) of Theorem \ref{mtheorem-A} assures that, in order to generate SF--singulari-ties with linear regularizations, we may drop the monotonicity of the transition function $\varphi$ (see Theorem \ref{teob} and Section \ref{sec:normalforms} for several examples). Moreover, $\varphi(0) = 1$ implies $f_{1}(0,y) = 0$, that is, there is a PS-tangency point between $X$ and $\Sigma$. Analogously, $\varphi(0) = -1$ implies $g_{1}(0,y) = 0$, that is, there is a PS-tangency point between $Y$ and $\Sigma$.

On the other hand, item (b) of Theorem \ref{mtheorem-A} assures that we extend the classical Filippov sliding region when the transition function satisfy $|\varphi(x_{0})| > 1$ for some $x_{0}$ in the open interval $(-1,1)$. According to item (c), the dynamics in $r$-sliding region $\Sigma^{s}_{r}$ is naturally extended using the Filippov sliding vector field (see the examples in Section \ref{sec:normalforms}, and in particular Subsection \ref{subsec-normal-form-cusp}).

Finally, item (d) says that $\Pi\big{(}C_{0}\big{)}$ is entirely contained in $D\Big{(}Z^{\Sigma}\Big{)}$, unless $C_{0}$ contains horizontal lines. This means that we can not define a sliding dynamics in $\Sigma\backslash\Big{(}D\big{(}Z^{\Sigma}\big{)}\Big{)}$ using geometric singular perturbation theory. See the examples in Subsection \ref{subsec-normal-form-fold}.

Now, we are concerned in establishing conditions that both piecewise smooth vector field and transition function must satisfy in order to generate SF-singularities. 

\begin{mtheorem}\label{teob}
Consider the PSVF \eqref{eq-piecewise-smooth} and let $\varphi$ be a transition function, not necessarily monotonic. After linear regularization and directional blow-up, it is possible to generate normally hyperbolic points, SF-fold singularities and SF-transcritical singularities. However, it is not possible to generate SF-pitchfork singularities.
\end{mtheorem}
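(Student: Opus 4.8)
The plan is to exploit the very special \emph{separated} structure of the reduced right-hand side of \eqref{eq-slow-fast-pwsvf-planar}. Putting $\varepsilon=0$ and writing
\[
A(y)=\frac{f_{1}(0,y)+g_{1}(0,y)}{2},\qquad B(y)=\frac{f_{1}(0,y)-g_{1}(0,y)}{2},
\]
the fast equation at $\varepsilon=0$ becomes $x'=f(x,y,0)$ with $f(x,y,0)=A(y)+\varphi(x)B(y)$. Consequently every fast derivative factors through $\varphi$, namely $\partial_{x}^{k}f(x,y,0)=\varphi^{(k)}(x)\,B(y)$ and $\partial_{x}^{k}\partial_{y}f(x,y,0)=\varphi^{(k)}(x)\,B'(y)$ for all $k\ge 1$. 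This identity (already implicit in Theorem \ref{mtheorem-A}(a)) is the hinge of both halves of the statement. Since all the notions involved are local and the separated form is preserved under translations in $(x,y)$, I will assume throughout that the singularity under consideration lies at the origin.

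For the positive assertions I would exhibit explicit piecewise smooth vector fields together with the required transition functions and simply check the non-degeneracy conditions. For normally hyperbolic points one takes a monotonic $\varphi$ (so $\varphi'(0)>0$) and any $X,Y$ with $f_{1}(0,0)\neq g_{1}(0,0)$, so that conditions \eqref{eq-conditions-nh} hold along $C_{0}$; this is just the ST-regularization case. For an SF-fold one needs $\varphi'(0)=0$, $\varphi''(0)\neq 0$, which forces $\varphi$ to be non-monotonic: taking $X=(y+1,1)$, $Y=(y-1,1)$ gives $f(x,y,0)=y+\varphi(x)$, hence $f(0,0,0)=\varphi(0)=0$ (choosing $\varphi(0)=0$), $f_{x}(0,0,0)=\varphi'(0)=0$, $f_{xx}(0,0,0)=\varphi''(0)\neq0$, $f_{y}(0,0,0)=1\neq0$ and $g(0,0,0)=1\neq0$, i.e.\ \eqref{eq-sing-fold-non-degeneracy-conditions}. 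For an SF-transcritical point one keeps $\varphi(0)=\varphi'(0)=0$, $\varphi''(0)>0$ but replaces the $y$-part: with $X=(1-y^{2},1)$, $Y=(-1-y^{2},1)$ one gets $f(x,y,0)=-y^{2}+\varphi(x)$, so $f(0,0,0)=f_{x}(0,0,0)=f_{y}(0,0,0)=0$, $f_{xx}(0,0,0)=\varphi''(0)>0$, $f_{yy}(0,0,0)=-2$, $f_{xy}(0,0,0)=0$, whence $\det\operatorname{Hes}(f)(0,0,0)=-2\varphi''(0)<0$ and $g(0,0,0)=1\neq0$, i.e.\ \eqref{eq-sing-transcritical-non-degeneracy-conditions}. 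In each case the existence of a (possibly non-monotonic) transition function with the prescribed finite jet at $0$ and the correct values $\pm 1$ outside $(-1,1)$ is furnished by Lemma \ref{lemma-trans-function-tech}; see Appendix \ref{sec-trans-func} and the worked examples of Section \ref{sec:normalforms}. Theorem \ref{teo-fenichel-normal-form-fold} then brings each system into the corresponding normal form.

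The impossibility of SF-pitchfork singularities is where the separated structure really bites, and although the computation is only a few lines, it is the conceptual core of the theorem. Suppose, for contradiction, that \eqref{eq-slow-fast-pwsvf-planar} satisfies the pitchfork conditions \eqref{eq-sing-pitchfork-non-degeneracy-conditions} at the origin. From $f_{xxx}(0,0,0)=\varphi'''(0)B(0)\neq 0$ we obtain $B(0)\neq 0$; from $f_{xy}(0,0,0)=\varphi'(0)B'(0)\neq 0$ we obtain $\varphi'(0)\neq 0$. But then $f_{x}(0,0,0)=\varphi'(0)B(0)\neq 0$, contradicting the requirement $f_{x}(0,0,0)=0$ in \eqref{eq-sing-pitchfork-non-degeneracy-conditions}. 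Hence no choice of PSVF and transition function realizes a pitchfork. The only point needing care is the reduction ``$=$ at the origin'': to \emph{generate} an SF-pitchfork means precisely that \eqref{eq-slow-fast-pwsvf-planar} satisfies \eqref{eq-sing-pitchfork-non-degeneracy-conditions} at some point, and translating that point to the origin is harmless because translations in $(x,y)$ preserve the factorization $\partial_{x}f(x,y,0)=\varphi'(x)B(y)$ on $\{\varepsilon=0\}$; everything else is immediate. This is exactly the obstruction that nonlinear regularizations will be shown to bypass in Theorem \ref{teoc}.
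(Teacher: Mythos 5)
Your proposal is correct and follows essentially the same route as the paper: both rest on the separated form $f(x,y,0)=A(y)+\varphi(x)B(y)$ of the blown-up fast equation and a direct comparison of its Taylor coefficients with the non-degeneracy conditions \eqref{eq-sing-fold-non-degeneracy-conditions}--\eqref{eq-sing-pitchfork-non-degeneracy-conditions}, with your explicit examples playing the role of the general conditions on $(Z,\varphi)$ that the paper records. Your pitchfork contradiction (that $f_{xy}(0,0,0)\neq 0$ forces $\varphi'(0)\neq 0$ while $f_{x}(0,0,0)=0$ together with $f_{xxx}(0,0,0)\neq 0$ forces $\varphi'(0)=0$) is the same structural obstruction the paper expresses in terms of $\varphi'''(0)$.
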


\begin{proof}

Let $\varphi$ be a transition function (not necessarily monotonic) and $Z = (X,Y)$ be a PSVF, in which $X = (f_{1},f_{2})$ and $Y = (g_{1},g_{2})$.

The proof is given by direct computations. The idea is to compare the coefficients of the Taylor expansion of the function that defines the critical set $C_{0}$ of \eqref{eq-slow-fast-pwsvf-planar} with the expressions of the normal forms given in Subsection \ref{sub-sec-normal-forms-sf}. With this procedure, we obtain that such coefficients must satisfy the following conditions in order to generate SF-singularities:

\begin{description}
    \item[(a)] \textit{Fenichel normal form (normally hyperbolic point):}
    \begin{equation}
\begin{split}
    f_{1}(0,0) - g_{1}(0,0)  \neq  0, \ \ \ \varphi'(0)\neq 0;
\end{split}
\end{equation}

    \item[(b)] \textit{SF-generic Fold:}
\begin{equation}\label{eq-teob-sf-fold}    
 \begin{split}
    & f_{1}(0,0) - g_{1}(0,0)  \neq  0,  \ \ \ \varphi'(0)  =  0, \ \ \ \varphi''(0)  \neq  0; \\
    & \varphi(0)  =  \displaystyle\frac{g_{1}(0,0) + f_{1}(0,0)}{g_{1}(0,0) - f_{1}(0,0)}; \\
    & \big{(}f_{1,y}(0,0) + g_{1,y}(0,0)\big{)} + \varphi(0)\big{(}f_{1,y}(0,0) - g_{1,y}(0,0)\big{)}  \neq  0.
\end{split}  
\end{equation}

    \item[(c)] \textit{SF-Transcritical singularity:}
\begin{equation}\label{eq-teob-sf-transcritical}     
 \begin{split}
    & f_{1}(0,0) - g_{1}(0,0)  \neq  0,  \ \ \ \varphi'(0)  =  0, \ \ \ \varphi''(0)  \neq  0; \\
    & \varphi(0)  =  \displaystyle\frac{g_{1}(0,0) + f_{1}(0,0)}{g_{1}(0,0) - f_{1}(0,0)}; \\
    & \scriptsize\left|
  \begin{array}{cc}
\frac{1}{4}\Big{(}(f_{1} - g_{1})\varphi''(0)\Big{)} & 0 \\
    0 & \frac{1}{4}\Big{(}(1 + \varphi(0))f_{1,yy} + (1 - \varphi(0))g_{1,yy}\Big{)} \\
  \end{array}
\right| <  0; \\
    & \text{where} \ f_{1}, g_{1}, f_{1,yy} \ \text{and} \ g_{1,yy} \ \text{are computed at} \ (0,0).
\end{split}  
\end{equation}    
    
    \item[(d)] \textit{SF-Pitchfork singularity:} it is not possible to generate this kind of SF-singularity, for any transition function $\varphi$. Indeed, such a SF-singularity lead us to require $\varphi'''(0) = 0$ and $\varphi'''(0) \neq 0$ simultaneously, which is a contradiction.
\end{description}
\end{proof}

Examples of normally hyperbolic critical sets can be found in Subsection \ref{subsec-normal-form-nh}. SF-fold singularities can be seen in Subsections \ref{subsec-normal-form-fold}. Finally, SF-transcritical singularities are generated by the PS-cusp singularities. See Subsection \ref{subsec-normal-form-cusp}.

\begin{remark}
Notice that the SF-fold, SF-transcritical, and SF-pitchfork singularities are non normally hyperbolic points.
\end{remark}

\begin{corollary}
Suppose that the origin is a regular-cusp singularity of the PSVF \eqref{eq-piecewise-smooth} and let $\varphi$ be a non-monotonic transition function such that $\varphi(0)=1,$ $\varphi'(0)=0,$ and $\varphi''(0)\neq 0$. If $g_1(0,0)\varphi''(0)f_{1,yy}(0,0)>0,$ then the regularized system associated with $Z$ has a SF-transcritical singularity at origin.
\end{corollary}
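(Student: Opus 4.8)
The plan is to reduce the statement to Theorem~\ref{teob}: I would verify, one block at a time, the conditions \eqref{eq-teob-sf-transcritical} identified in the proof of that theorem as the ones forcing the blown-up slow-fast system to satisfy the non-degeneracy conditions \eqref{eq-sing-transcritical-non-degeneracy-conditions} of an SF-generic transcritical singularity, so that the conclusion follows from Theorem~\ref{teob}(c) together with Theorem~\ref{teo-fenichel-normal-form-fold}(b).

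First I would translate the hypothesis that the origin is a regular-cusp singularity of \eqref{eq-piecewise-smooth} into conditions on the Taylor coefficients of $X=(f_1,f_2)$ and $Y=(g_1,g_2)$. In the coordinates of \eqref{eq-piecewise-smooth} the switching manifold is $\{x=0\}$, i.e. $F(x,y)=x$, so $XF=f_1$; assuming the generic condition $X(0,0)\neq 0$ (hence $f_2(0,0)\neq 0$, since $f_1(0,0)=0$ at a tangency point), a short computation of the higher Lie derivatives gives $X^2F(0,0)=f_2(0,0)f_{1,y}(0,0)$ and $X^3F(0,0)=f_2(0,0)^2 f_{1,yy}(0,0)$. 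Thus the PS-cusp conditions $XF(0,0)=X^2F(0,0)=0$, $X^3F(0,0)\neq 0$ for $X$ become $f_1(0,0)=0$, $f_{1,y}(0,0)=0$, $f_{1,yy}(0,0)\neq 0$, while the regularity of $Y$ at the origin means $g_1(0,0)=YF(0,0)\neq 0$.

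Next I would check \eqref{eq-teob-sf-transcritical} block by block. The condition $f_1(0,0)-g_1(0,0)\neq 0$ holds because $f_1(0,0)-g_1(0,0)=-g_1(0,0)\neq 0$; the conditions $\varphi'(0)=0$ and $\varphi''(0)\neq 0$ are hypotheses; the compatibility relation $\varphi(0)=(g_1(0,0)+f_1(0,0))/(g_1(0,0)-f_1(0,0))$ holds since its right-hand side equals $g_1(0,0)/g_1(0,0)=1=\varphi(0)$. Finally, substituting $\varphi(0)=1$ and $f_1(0,0)=0$ into the $2\times2$ determinant of \eqref{eq-teob-sf-transcritical} kills the off-diagonal terms and turns the diagonal entries into $-\tfrac14 g_1(0,0)\varphi''(0)$ and $\tfrac12 f_{1,yy}(0,0)$, so the determinant equals $-\tfrac18\,g_1(0,0)\varphi''(0)f_{1,yy}(0,0)$, which is strictly negative exactly because of the hypothesis $g_1(0,0)\varphi''(0)f_{1,yy}(0,0)>0$. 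All the requirements of \eqref{eq-teob-sf-transcritical} being met, Theorem~\ref{teob}(c) yields an SF-transcritical singularity of the regularized system at the origin.

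The only non-mechanical step — and the one I expect to require the most care — is the coordinate translation of the intrinsic PS-cusp conditions into conditions on the partial derivatives of $f_1$: one must compute $X^2F$ and $X^3F$ in the chart $F(x,y)=x$ and use the genericity assumption $f_2(0,0)\neq 0$ to see that $X^2F(0,0)=0$ forces $f_{1,y}(0,0)=0$ and that $X^3F(0,0)\neq 0$ is equivalent to $f_{1,yy}(0,0)\neq 0$. Everything else is direct substitution into the already-established conditions \eqref{eq-teob-sf-transcritical}.
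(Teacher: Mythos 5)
Your proposal is correct and follows essentially the same route as the paper: translate the regular-cusp conditions into $f_1(0,0)=f_{1,y}(0,0)=0$, $f_{1,yy}(0,0)\neq 0$, $g_1(0,0)\neq 0$ via the Lie derivatives $X^2F=f_2f_{1,y}$ and $X^3F=f_2^2f_{1,yy}$, then verify the conditions \eqref{eq-teob-sf-transcritical} of Theorem \ref{teob}, with the determinant reducing to $-\tfrac18 g_1(0,0)\varphi''(0)f_{1,yy}(0,0)<0$. Your explicit flagging of the genericity assumption $f_2(0,0)\neq 0$ is a small point the paper leaves implicit, but otherwise the two arguments coincide.
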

\begin{proof}
Suppose that the origin is a regular-cusp singularity of the PSVF \eqref{eq-piecewise-smooth}, that is, 
\begin{itemize}
    \item $fh(0,0)=f_1(0,0)=0;$
    \item $f^2h(0,0)=f_{1,y}(0,0)f_2(0,0)=0,$ thus $f_{1,y}(0,0)=0;$
    \item $f^3h(0,0)=f_{1,yy}(0,0)(f_2(0,0))^2\neq 0,$ hence $f_{1,yy}(0,0)\neq 0;$
    \item $gh(0,0)=g_1(0,0)\neq 0;$
 \end{itemize}
 where $h(x,y)=x.$ Then, we get that
\begin{itemize}
    \item $(f_1-g_1)(0,0)=-g_1(0,0)\neq 0;$
    \item $\varphi(0)=1;$
    \item $\scriptsize\left|
  \begin{array}{cc}
\frac{1}{4}\Big{(}(f_{1} - g_{1})\varphi''(0)\Big{)} & 0 \\
    0 & \frac{1}{4}\Big{(}(1 + \varphi(0))f_{1,yy} + (1 - \varphi(0))g_{1,yy}\Big{)} \\
  \end{array}
\right|=-\frac{g_1\varphi''(0)f_{1,yy}}{8}.$
\end{itemize}
Since $\varphi'(0)=0$ and $g_1(0,0)\varphi''(0)f_{1,yy}(0,0)>0$, then Theorem \ref{teob} implies that the origin is a SF-transcritical singularity.
\end{proof}

Using the definition of a regular-fold singularity of the PSVF \eqref{eq-piecewise-smooth} and Theorem \ref{teob} we obtain the following result.

\begin{corollary}
Suppose that the origin is a regular-fold singularity of the PSVF \eqref{eq-piecewise-smooth} and let $\varphi$ be a non-monotonic transition function such that $\varphi(0)=1,$ $\varphi'(0)=0,$ and $\varphi''(0)\neq 0$. Then the regularized system associated with $Z$ has a SF-fold singularity at origin.
\end{corollary}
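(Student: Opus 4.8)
The plan is to unwind the definition of a regular-fold singularity of the PSVF \eqref{eq-piecewise-smooth} into explicit conditions on the components of $X$ and $Y$, and then to check, one line at a time, that the hypotheses of Theorem~\ref{teob}(b) for a SF-generic fold — namely conditions \eqref{eq-teob-sf-fold} — are all met. This is entirely parallel to the proof of the preceding corollary on regular-cusp singularities.

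Writing $h(x,y)=x$ so that $\Sigma=\{h=0\}$, the origin being a regular-fold singularity of $Z=(X,Y)$ means that $0$ is a PS-fold point of $X$ and a regular point of $Y$, that is,
\begin{itemize}
\item $Xh(0,0)=f_1(0,0)=0$;
\item $X^2h(0,0)=f_{1,y}(0,0)\,f_2(0,0)\neq 0$, hence in particular $f_{1,y}(0,0)\neq 0$ and $f_2(0,0)\neq 0$;
\item $Yh(0,0)=g_1(0,0)\neq 0$.
\end{itemize}
I would point out that assigning the tangency to $X$ (rather than to $Y$) is exactly what is compatible with the hypothesis $\varphi(0)=1$: the identity $\varphi(0)=(g_1(0,0)+f_1(0,0))/(g_1(0,0)-f_1(0,0))$ appearing in \eqref{eq-teob-sf-fold} forces $f_1(0,0)=0$ precisely when $\varphi(0)=1$ (and would force $g_1(0,0)=0$ when $\varphi(0)=-1$).

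With these relations in hand, every line of \eqref{eq-teob-sf-fold} is immediate: $(f_1-g_1)(0,0)=-g_1(0,0)\neq 0$; $\varphi'(0)=0$ and $\varphi''(0)\neq 0$ hold by hypothesis; $\varphi(0)=\dfrac{g_1(0,0)+f_1(0,0)}{g_1(0,0)-f_1(0,0)}=1$; and finally, using $\varphi(0)=1$,
\[
\big(f_{1,y}(0,0)+g_{1,y}(0,0)\big)+\varphi(0)\big(f_{1,y}(0,0)-g_{1,y}(0,0)\big)=2f_{1,y}(0,0)\neq 0
\]
by the fold non-degeneracy. To close the argument I would also note that the $y$-component of the blown-up slow--fast system \eqref{eq-slow-fast-pwsvf-planar} evaluated at the origin equals $\tfrac12(f_2+g_2)(0,0)+\varphi(0)\tfrac12(f_2-g_2)(0,0)=f_2(0,0)\neq 0$, which supplies the remaining transversality condition $g(0,0,0)\neq 0$ in \eqref{eq-sing-fold-non-degeneracy-conditions}. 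Then Theorem~\ref{teob} applies and the origin is a SF-fold singularity.

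There is no serious obstacle here: the statement is a direct specialization of Theorem~\ref{teob}(b), and the ``proof'' is really a matter of bookkeeping. The only two points that require a little care are (i) making sure the PS-tangency belongs to $X$, so that $\varphi(0)=1$ is the consistent normalization forced by \eqref{eq-teob-sf-fold}, and (ii) observing that the single fold inequality $X^2h(0,0)\neq 0$ does double duty, delivering both $f_{1,y}(0,0)\neq 0$ (used in the last line of \eqref{eq-teob-sf-fold}) and $f_2(0,0)\neq 0$ (used for the $g\neq 0$ transversality in \eqref{eq-sing-fold-non-degeneracy-conditions}).
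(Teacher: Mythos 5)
Your proof is correct and follows exactly the route the paper intends: the corollary is stated in the paper without an explicit proof (it is presented as an immediate consequence of the definition of a regular-fold and Theorem~\ref{teob}, in parallel with the regular-cusp corollary), and your verification of each line of \eqref{eq-teob-sf-fold} from $f_1(0,0)=0$, $f_{1,y}(0,0)f_2(0,0)\neq 0$, $g_1(0,0)\neq 0$ is precisely that bookkeeping. Your additional check that the slow component $f_2(0,0)\neq 0$ supplies the transversality $g(0,0,0)\neq 0$ of \eqref{eq-sing-fold-non-degeneracy-conditions} is a welcome extra, since that condition is not listed explicitly in \eqref{eq-teob-sf-fold}.
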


At some point, the reader may think that, after non monotonic linear regularization and blow-up, it is possible to generate any SF-singularity, since it is just a matter of a suitable choice of the transition function. In general, this is not true. Indeed, item (d) of Theorem \ref{teob} assures that it does not exist a transition function that generates a SF-pitchfork singularity. This leads us to consider nonlinear regularizations.

\subsection{Nonlinear regularization and SF-singularities}\label{Nonlinear regularization and singularities}
\noindent

Although the SF-pitchfork singularity cannot be obtained when the regularization is linear, it is possible to generate it if we consider the nonlinear regularization. 
In what follows, we present a version of Theorem \ref{teob} for nonlinear regularization.

\begin{mtheorem}\label{teoc}
Consider the PSVF \eqref{eq-piecewise-smooth} and let $\varphi$ be a monotonic transition function. After $\varphi$-nonlinear regularization $Z_{\varepsilon}(x,y) = \widetilde{Z}(\varphi(\frac{x}{\varepsilon}), x,y)$ and directional blow-up, it is possible to generate normally hyperbolic points, SF-fold singularities, SF-transcritical singularities and SF-pitchfork singularities.
\end{mtheorem}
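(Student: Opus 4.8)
The plan is to follow the same direct-computation strategy used in the proof of Theorem \ref{teob}, but now with the extra flexibility afforded by the nonlinear regularization. After the directional blow-up $(\widetilde x,\widetilde y,\varepsilon)\mapsto(\varepsilon\widetilde x,\widetilde y,\varepsilon)$, the $\varphi$-nonlinear regularization $Z_\varepsilon(x,y)=\widetilde Z(\varphi(x/\varepsilon),x,y)$ produces a slow-fast system whose critical set $C_0$ is defined by $\widetilde f_1(\varphi(x),x,y)=0$, where $\widetilde Z=(\widetilde f_1,\widetilde f_2)$ and $\widetilde f_1(\lambda,x,y)$ interpolates between $g_1$ (at $\lambda=-1$) and $f_1$ (at $\lambda=1$). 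The key observation is that $\widetilde f_1$ now depends on the interpolation parameter $\lambda$ in an essentially arbitrary smooth way (subject only to the two endpoint constraints), so the partial derivatives $\partial_\lambda\widetilde f_1$, $\partial_\lambda^2\widetilde f_1$, $\partial_\lambda^3\widetilde f_1$ at $\lambda=\varphi(0)$ are free parameters. This is precisely what breaks the obstruction in item (d) of Theorem \ref{teob}: in the linear case the coefficient of $x^k$ in the Taylor expansion of the critical-set equation was forced to be a fixed multiple of $\varphi^{(k)}(0)(f_1-g_1)$, chaining together $\varphi'''(0)=0$ and $\varphi'''(0)\neq0$; now the analogous coefficients involve independent combinations of $\partial_\lambda^j\widetilde f_1$ and the (monotone, hence $\varphi'(0)>0$) derivatives of $\varphi$.

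First I would set up the Taylor expansion of $H(x,y):=\widetilde f_1(\varphi(x),x,y)$ at the origin, writing out the coefficients of $1$, $x$, $x^2$, $x^3$, $y$, $xy$ in terms of $\varphi(0),\varphi'(0),\varphi''(0),\varphi'''(0)$ and the partials of $\widetilde f_1$ in all three slots. Then, exactly as in Theorem \ref{teob}, I would match these coefficients against the non-degeneracy conditions \eqref{eq-sing-fold-non-degeneracy-conditions}, \eqref{eq-sing-transcritical-non-degeneracy-conditions}, and \eqref{eq-sing-pitchfork-non-degeneracy-conditions}. The normally hyperbolic, SF-fold, and SF-transcritical cases go through essentially as before (the linear regularization is a special case of the nonlinear one, taking $\widetilde Z$ affine in $\lambda$), so the only genuinely new content is the SF-pitchfork case. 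There I would exhibit an explicit choice: pick a monotone $\varphi$ (so $\varphi'(0)>0$), impose $f_1(0,0)=g_1(0,0)$ so that $H(0,0)=0$ regardless of $\varphi(0)$, and then use the freedom in $\partial_\lambda\widetilde f_1$, $\partial_\lambda^2\widetilde f_1$ at $\lambda=\varphi(0)$ to force $H_x(0,0)=0$ and $H_{xx}(0,0)=0$ while keeping $H_{xxx}(0,0)\neq0$ (using $\partial_\lambda^3\widetilde f_1$ and/or $\varphi'(0)^3$), $H_{xy}(0,0)\neq0$, and $g(0,0)\neq0$; the second sl-slot dependence of $\widetilde f_1$ on $x$ directly also helps decouple these equations.

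The main obstacle I expect is bookkeeping rather than conceptual: one must carefully track how the chain rule distributes $\varphi$'s derivatives among the three coefficients that need to vanish, and verify that the resulting linear system for the unknowns $\partial_\lambda\widetilde f_1,\partial_\lambda^2\widetilde f_1,\partial_\lambda^3\widetilde f_1$ (together with the direct $x$-dependence of $\widetilde f_1$) is solvable with the non-vanishing constraints genuinely satisfiable — i.e. that we are not accidentally re-deriving a hidden contradiction of the Theorem \ref{teob} type. Because $\widetilde Z$ is only required to be a smooth $1$-parameter family with prescribed endpoints, constructing an explicit $\widetilde f_1(\lambda,x,y)$ realizing any prescribed jet at $(\varphi(0),0,0)$ is routine (e.g. interpolate with a fixed bump-function partition between the two endpoints and add a localized perturbation), so once the coefficient-matching bookkeeping is done the construction closes. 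I would finish by recording the explicit normal-form model in Example \ref{exe-non-linear-reg} to make the pitchfork case concrete.
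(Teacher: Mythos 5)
Your proposal follows essentially the same route as the paper: Taylor-expand $\widetilde{Z}^{1}(\varphi(\tilde x),\varepsilon\tilde x,y)$ near the origin, match the coefficients against the normal-form conditions \eqref{eq-sing-fold-non-degeneracy-conditions}--\eqref{eq-sing-pitchfork-non-degeneracy-conditions}, and use the monotonicity $\varphi'(0)\neq 0$ together with the freedom in the $\lambda$-derivatives $\partial_\lambda^{j}\widetilde{Z}^{1}$ to realize the pitchfork conditions that were obstructed in the linear case, finishing with the explicit model of Example \ref{exe-non-linear-reg}; the paper's proof records precisely the resulting jet conditions on $\widetilde{Z}^{1}$ at $(\varphi(0),0,0)$. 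One minor correction: after the blow-up the second slot becomes $\varepsilon\tilde x$, so the direct $x$-dependence of $\widetilde{Z}^{1}$ enters only at order $\varepsilon$ and does not help decouple the critical-set equations at $\varepsilon=0$ --- all the needed freedom comes from the $\lambda$-derivatives, which your main argument already supplies.
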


\begin{proof}
Let $\varphi$ be a monotonic transition function and $Z = (X,Y)$ be a PSVF. Consider the $\varphi$-nonlinear regularization $Z_{\varepsilon}(x,y) = \widetilde{Z}(\varphi(\frac{x}{\varepsilon}), x,y)$ of $X$ and $Y$, where $\widetilde{Z}=(\widetilde{Z}^1,\widetilde{Z}^2)$. The proof is given by direct computations. The idea is to compare the coefficients of the Taylor expansion of the function $\widetilde{Z}^1(\varphi(\tilde{x}),\varepsilon\tilde{x},y)$ near $(0,0,0)$ with the expressions of the normal forms given in Subsection \ref{sub-sec-normal-forms-sf} and use that $\varphi'(t)\neq 0$ for all $t\in(-1,1)$. With this procedure, we obtain that such coefficients must satisfy the following conditions in order to generate SF-singularities:

\begin{description}
    \item[(a)] \textit{Fenichel normal form (normally hyperbolic point):}
    \begin{equation}
\begin{split}
    \widetilde{Z}^1_\lambda(\varphi(0),0,0)\neq 0;
\end{split}
\end{equation}

\item[(b)] \textit{SF-generic Fold:}
\begin{equation}\label{eq-teoc-sf-fold}        
 \begin{split}
 & \widetilde{Z}^1(\varphi(0),0,0) = 0; \ \ \ \widetilde{Z}^1_\lambda(\varphi(0),0,0) = 0; \ \ \ \widetilde{Z}^1_{\lambda\lambda}(\varphi(0),0,0)\neq 0; \\
& \widetilde{Z}^1_y(\varphi(0),0,0) \neq 0; \ \ \ \widetilde{Z}^2(\varphi(0),0,0)\neq 0.
\end{split}  
\end{equation}

\item[(c)] \textit{SF-Transcritical singularity:}
\begin{equation}\label{eq-teoc-sf-transcritical}         
 \begin{split}
& \widetilde{Z}^1(\varphi(0),0,0) = 0; \ \ \ \widetilde{Z}^1_\lambda(\varphi(0),0,0) = 0; \ \ \  \widetilde{Z}^1_{\lambda\lambda}(\varphi(0),0,0)\neq 0;\\
& \widetilde{Z}^1_y(\varphi(0),0,0)\neq 0; \ \ \ \widetilde{Z}^2(\varphi(0),0,0)\neq 0; \\
& \Big{(}\widetilde{Z}^1_{\lambda y}(\varphi(0),0,0)\Big{)}^2-\widetilde{Z}^1_{\lambda\lambda}(\varphi(0),0,0)\widetilde{Z}^1_{yy}(\varphi(0),0,0) > 0.
\end{split}  
\end{equation}    
    
\item[(d)] \textit{SF-Pitchfork singularity:}
\begin{equation}\label{eq-teoc-sf-pitchfork}     
 \begin{split}
& \widetilde{Z}^1(\varphi(0),0,0) = 0; \ \ \ \widetilde{Z}^1_\lambda(\varphi(0),0,0) = 0; \ \ \ \widetilde{Z}^1_{\lambda\lambda}(\varphi(0),0,0) = 0; \\
& \widetilde{Z}^1_y(\varphi(0),0,0) = 0; \ \ \ \widetilde{Z}^1_{\lambda\lambda\lambda}(\varphi(0),0,0) \neq 0; \ \ \ \widetilde{Z}^1_{\lambda y}(\varphi(0),0,0)\neq 0; \\
& \widetilde{Z}^2(\varphi(0),0,0)\neq 0.
\end{split}  
\end{equation}

\end{description}
\end{proof}

An example of a nonlinear regularized system with SF-pitchfork singularity is presented in the section \ref{subsec-normal-form-II-fold}. Even more, in Example \ref{exe-non-linear-reg} we provide a family of nonlinear regularizations that have this type of singularity.


\subsection{Dynamics of the regularized systems near to the SF-fold, SF-transcritical and SF-pitchfork singularities.}\label{subsec-fold-trans-pitch-reg}
\noindent

Combining Theorems \ref{teob} and \ref{teoc} and the results obtained in \cite{KrupaSzmolyan,KrupaSzmolyan2}, it is possible to describe the behavior of the orbits of the regularized systems near to the SF-fold, SF-transcritical and SF-pitchfork singularities for $\varepsilon > 0$.

\subsubsection{SF-fold case}
Let $\rho > 0$ be sufficiently small and consider a suitable interval $J\subset \mathbb{R}$. Denote by $C_{a,0}$ (resp. $C_{r,0}$) the attracting branch (resp. the repelling branch) of the critical manifold $C_0$. Suppose there exists a neighborhood $U$ of the origin such that $\Delta^{in}=\{(x, \rho^2), x\in J\}$ is a transversal section in $U$ to $C_{a,0}$ and $\Delta^{out}=\{(\rho,y), y\in\mathbb{R}\}$ is a transversal section in $U$ to the fast fibers.

The Fenichel theory assures that, for $\varepsilon > 0$, outside of a small neighbourhood of $(0,0)$, there exists two branches of slow manifolds: one is attracting ($C_{a,\varepsilon}$) and the second one is reppeling ($C_{r,\varepsilon}$). 

Let $\pi:\Delta^{in}\rightarrow\Delta^{out}$ be a transition map for the fast flow associate to \eqref{eq-piecewise-smooth-regularized}. The dynamics of the regularized system near a SF-fold singularity is established in the following Corollary (see Figure \ref{fig_fold}), which follows from Theorems \ref{teob} and \ref{teoc}, and Theorem 2.1 of \cite{KrupaSzmolyan}.
\begin{corollary}
Consider the PSVF \eqref{eq-piecewise-smooth} and let $\varphi$ be a transition function. Suppose that the origin satisfies the conditions \eqref{eq-teob-sf-fold} or \eqref{eq-teoc-sf-fold}. Then there exists $\varepsilon_{0} > 0$ such that the following statements hold for $\varepsilon\in (0, \varepsilon_{0}]:$
\begin{itemize}
    \item The manifold $C_{a,\varepsilon}$ passes through $\Delta^{out}$ at a point $(\rho, h(\varepsilon))$, where $h(\varepsilon) = \mathcal{O}(\varepsilon^{\frac{2}{3}})$.
    \item The transition map $\pi$ is a contraction with contraction rate $\mathcal{O}(e^{-\frac{c}{\varepsilon}})$, where $c$ is a positive constant.
\end{itemize}
\end{corollary}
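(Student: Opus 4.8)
The plan is to deduce the statement from Theorem~2.1 of \cite{KrupaSzmolyan}, after first recognizing that the blown-up regularized system is, near the origin, already smoothly conjugate to the fold normal form.

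First I would use Theorem \ref{teob} (in the linear case) or Theorem \ref{teoc} (in the nonlinear case) to record that the hypothesis ``the origin satisfies \eqref{eq-teob-sf-fold} or \eqref{eq-teoc-sf-fold}'' is precisely the assertion that the slow--fast system obtained from \eqref{eq-piecewise-smooth-regularized} by the directional blow-up $(\widetilde x,\widetilde y,\varepsilon)\mapsto(\varepsilon\widetilde x,\widetilde y,\varepsilon)$ --- that is, system \eqref{eq-slow-fast-pwsvf-planar} or its $\varphi$-nonlinear analogue --- satisfies the non-degeneracy conditions \eqref{eq-sing-fold-non-degeneracy-conditions} of a planar SF-generic fold at the origin. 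In particular, near $0$ the critical set $C_0$ is a smooth curve tangent to a fast fiber at $0$, with one normally hyperbolic attracting branch $C_{a,0}$ and one normally hyperbolic repelling branch $C_{r,0}$, and $\widetilde{Z}^2(\varphi(0),0,0)\neq 0$ guarantees that the slow flow does not vanish at the fold.

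Next I would apply item~(a) of Theorem \ref{teo-fenichel-normal-form-fold} to obtain a smooth ($C^{r-1}$, with $r$ large enough for the subsequent blow-up analysis) change of coordinates bringing the system to
\[
x' = y + x^2 + \mathcal{O}(x^3,xy,y^2,\varepsilon), \qquad y' = \varepsilon\bigl(\pm 1 + \mathcal{O}(x,y,\varepsilon)\bigr).
\]
Since the origin is a \emph{generic} fold and not a canard point, by further reversing the signs of $x$ and/or $y$ if necessary I would arrange that the slow flow on the attracting branch $C_{a,0}$ is directed toward the fold; in this configuration the system is exactly of the type treated in \cite[Theorem~2.1]{KrupaSzmolyan}. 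Under this normalizing change of coordinates the sections $\Delta^{in}=\{(x,\rho^2),\,x\in J\}$ and $\Delta^{out}=\{(\rho,y)\}$ become admissible transversal sections --- transversal to $C_{a,0}$ and to the fast fibers respectively --- so that the transition map $\pi:\Delta^{in}\to\Delta^{out}$ of the fast flow of \eqref{eq-piecewise-smooth-regularized} corresponds to the map analyzed there. Finally I would quote \cite[Theorem~2.1]{KrupaSzmolyan}: its blow-up of the fold point gives (i) that the Fenichel attracting slow manifold $C_{a,\varepsilon}$ extends past the fold and meets $\Delta^{out}$ at distance $\mathcal{O}(\varepsilon^{2/3})$ from the fast fiber through the fold, i.e. at $(\rho,h(\varepsilon))$ with $h(\varepsilon)=\mathcal{O}(\varepsilon^{2/3})$; and (ii) that, since $C_{a,\varepsilon}$ and $C_{r,\varepsilon}$ are $\mathcal{O}(e^{-c/\varepsilon})$-close and $\Delta^{in}$ lies in the attracting Fenichel neighbourhood, $\pi$ is a contraction with rate $\mathcal{O}(e^{-c/\varepsilon})$ for some $c>0$. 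Choosing $\varepsilon_0$ as in that theorem yields the Corollary.

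The hard part is the matching step: one must verify that the non-degeneracy data \eqref{eq-teob-sf-fold}/\eqref{eq-teoc-sf-fold} really do translate, after the normalizing change of coordinates, into the sign configuration (``attracting branch, then jump'') required by \cite{KrupaSzmolyan} rather than into a canard scenario, and that the explicit sections $\Delta^{in}$, $\Delta^{out}$ of the statement pull back to sections admissible for that theorem. Once the hypotheses are lined up, the exponent $\tfrac{2}{3}$ and the exponential rate $\mathcal{O}(e^{-c/\varepsilon})$ are imported verbatim and no further estimate has to be carried out.
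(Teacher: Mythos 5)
Your proposal is correct and follows essentially the same route as the paper, which likewise obtains the corollary by combining Theorem \ref{teob}/\ref{teoc} (to identify the origin as a planar SF-generic fold of the blown-up system) with Theorem 2.1 of \cite{KrupaSzmolyan}. You are in fact more explicit than the paper about the one genuinely delicate point — checking that the non-degeneracy conditions place the system in the jump-point (non-canard) configuration required by Krupa--Szmolyan — which the paper leaves implicit.
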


\begin{figure}[h!]
  \center{\includegraphics[width=1.2\textwidth]{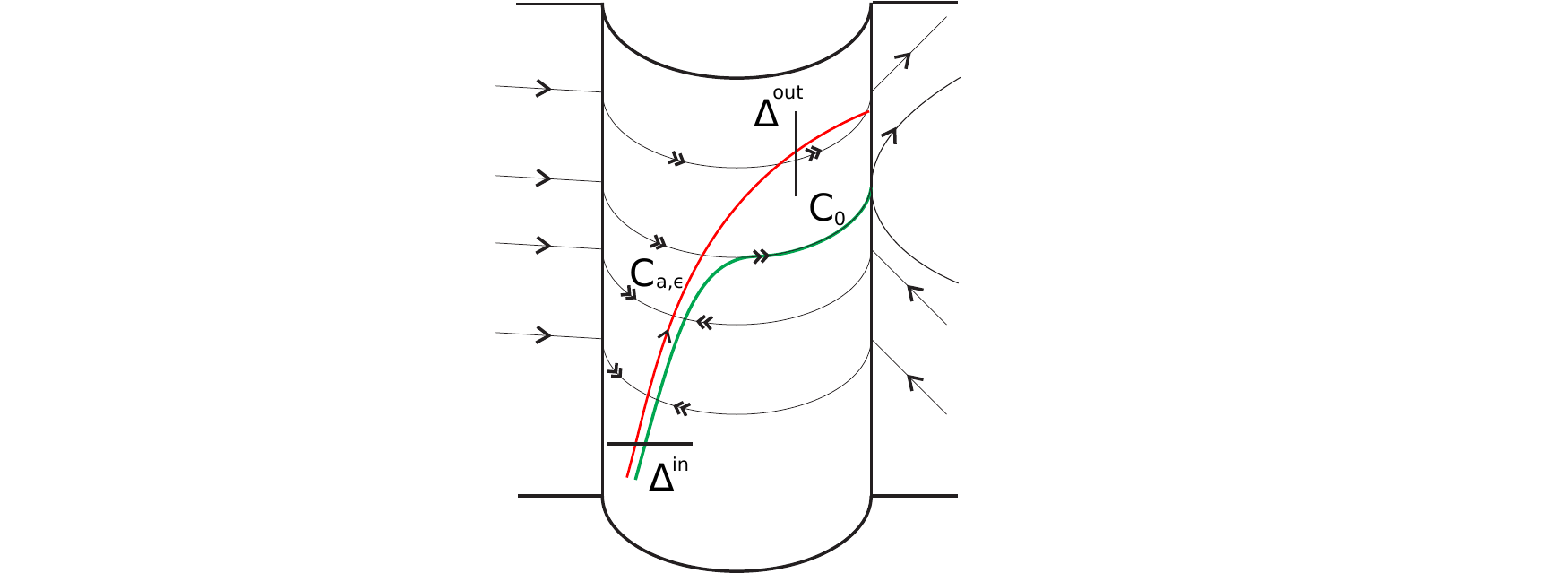}}\\
  \caption{\footnotesize{The red curve (which is not on the cylinder) represents an orbit for $\varepsilon > 0$ near a SF-fold singularity raised from a regularized PS-fold singularity of the PSVF \eqref{eq-piecewise-smooth}.}}
  \label{fig_fold}
\end{figure}

\subsubsection{SF-transcritical case}
The critical manifold $C_0$ is the union of four branches: two of them are attracting ($C^{+}_{a,0}$ and $C^{-}_{a,0}$) and the other two are repelling ($C^{+}_{r,0}$ and $C^{-}_{r,0}$), where the superscript $+$ or $-$ corresponds to the sign of the $y$ variable. 

The Fenichel theory implies that for $\varepsilon> 0$, outside of a small neighbourhood of $(0,0)$, there exist four branches of slow manifolds: two attracting ($C^{+}_{a,\varepsilon}$ and $C^{-}_{a,\varepsilon}$) and two repelling ($C^{+}_{r,\varepsilon}$ and $C^{-}_{r,\varepsilon}$). 

Now, consider a suitable neighborhood $J$ of $0\in\mathbb{R}$ and define the sections $\Delta^{in}=\{(-\rho,y), y+\rho \in J \}$, $\Delta^{out}_{e} = \{(\rho,y), y \in J \}$, and
$\Delta^{out}_{a} = \{(-\rho,y), y-\rho \in J\}$.

Let $\pi_{a}$ and $\pi_{e}$ be transition maps from $\Delta^{in}$ to $\Delta^{out}_{a}$ and $\Delta^{out}_{e}$, respectively. The dynamics of the regularized system near a SF-transcritical singularity is established in the following Corollary (see Figure \ref{fig_trans}), which is a consequence of Theorems \ref{teob} and \ref{teoc}, and the Theorem 2.1 of \cite{KrupaSzmolyan2}. We remark that the constant $\lambda$ is given in Lemma 2.1 of \cite{KrupaSzmolyan2}.

\begin{corollary}
Fix $\lambda\neq 1$. Consider the PSVF \eqref{eq-piecewise-smooth} and let $\varphi$ be a transition function. Suppose that the origin satisfies the conditions \eqref{eq-teob-sf-transcritical} or \eqref{eq-teoc-sf-transcritical}. There exists $\varepsilon_{0} > 0$ such that the following statements hold for $\varepsilon\in (0, \varepsilon_{0}]$:
\begin{itemize}
    \item If $\lambda> 1$, then the manifold $C_{a,\varepsilon}^{-}$ passes through $\Delta^{out}_{e}$ at a point $(\rho,h(\varepsilon))$, in which $h(\varepsilon) = \mathcal{O}(\sqrt{\varepsilon})$. The section $\Delta^{in}$ is mapped by $\pi_{a}$ to an interval containing $C_{a,\varepsilon}^{-}\cap\Delta^{out}_{e}$ of size $\mathcal{O}(e^{-\frac{C}{\varepsilon}})$, where $C$ is a positive constant.
    \item If $\lambda< 1$, then $\Delta^{in}$ (including the point $\Delta^{in}\cap C_{a,\varepsilon}^{-})$ is mapped by $\pi_{e}$ to an interval about $C_{a,\varepsilon}^{+}$ of size $\mathcal{O}(e^{-\frac{C}{\varepsilon}})$, where $C$ is a positive constant.
\end{itemize}
\end{corollary}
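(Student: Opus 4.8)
\smallskip
\noindent\emph{Proof proposal.} The plan is to reduce the statement to Theorem 2.1 of \cite{KrupaSzmolyan2} after putting the regularized‑and‑blown‑up system into transcritical normal form. First I would invoke Theorems \ref{teob} and \ref{teoc}: under hypothesis \eqref{eq-teob-sf-transcritical} (linear regularization) or \eqref{eq-teoc-sf-transcritical} ($\varphi$‑nonlinear regularization), the planar slow‑fast system \eqref{eq-slow-fast-pwsvf-planar} obtained after the directional blow‑up satisfies the non‑degeneracy conditions \eqref{eq-sing-transcritical-non-degeneracy-conditions} of a SF‑generic transcritical singularity at the origin. Then item (b) of Theorem \ref{teo-fenichel-normal-form-fold} supplies a smooth change of coordinates $\Phi$, defined near the origin, bringing the system to the normal form \eqref{eq-normal-form-transcritical}, namely $x' = x^{2} - y^{2} + \lambda\varepsilon + \mathcal{O}(x^{3}, x^{2}y, xy^{2}, y^{3}, \varepsilon)$ and $y' = \varepsilon(1 + \mathcal{O}(x,y,\varepsilon))$, where $\lambda$ is precisely the modulus furnished by Lemma 2.1 of \cite{KrupaSzmolyan2}.

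Second, I would apply Theorem 2.1 of \cite{KrupaSzmolyan2} verbatim to this normal form. Fenichel's theorem, recalled above, guarantees that for $\varepsilon > 0$ the four branches $C^{+}_{a,\varepsilon}$, $C^{-}_{a,\varepsilon}$, $C^{+}_{r,\varepsilon}$, $C^{-}_{r,\varepsilon}$ persist outside a fixed neighbourhood of $(0,0)$; Theorem 2.1 of \cite{KrupaSzmolyan2} then states exactly that, for $\varepsilon\in(0,\varepsilon_{0}]$: when $\lambda > 1$, the branch $C^{-}_{a,\varepsilon}$ crosses the exit section at height $\mathcal{O}(\sqrt{\varepsilon})$ and the entry section is contracted by the transition map onto an interval of size $\mathcal{O}(e^{-C/\varepsilon})$ containing that crossing point; and when $\lambda < 1$, the entry section, together with its intersection with $C^{-}_{a,\varepsilon}$, is mapped onto an interval of size $\mathcal{O}(e^{-C/\varepsilon})$ about $C^{+}_{a,\varepsilon}$.

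Third, I would transfer these conclusions back through $\Phi$. Since $\Phi$ is a diffeomorphism fixing the origin, it carries the sections $\Delta^{in}$, $\Delta^{out}_{e}$ and $\Delta^{out}_{a}$ to curves that are transversal, respectively, to $C_{a,0}$ and to the fast fibers near the origin; hence, after reparametrizing these sections and possibly shrinking $\rho$ and $\varepsilon_{0}$, the transition maps $\pi_{a}$ and $\pi_{e}$ are smoothly conjugate to the ones analyzed in normal form coordinates. A diffeomorphism preserves the orders $\mathcal{O}(\sqrt{\varepsilon})$ and $\mathcal{O}(e^{-C/\varepsilon})$ (changing the constant $C$ at worst), and $\lambda$ is a modulus of the singularity, hence coordinate‑independent; so the dichotomy $\lambda > 1$ versus $\lambda < 1$ is meaningful in the blown‑up coordinates and the stated estimates follow.

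The step I expect to be the main obstacle is the bookkeeping for the sections: one must verify that the particular sections $\Delta^{in} = \{(-\rho,y)\}$, $\Delta^{out}_{e} = \{(\rho,y)\}$ and $\Delta^{out}_{a} = \{(-\rho,y)\}$ chosen in the blown‑up coordinates correspond, after $\Phi$, to sections of exactly the type required in \cite{KrupaSzmolyan2} (i.e. that the lines $x = \pm\rho$ stay transverse to the relevant invariant objects and lie on the prescribed side of the singularity), and that the $\lambda$ appearing in the statement is the one obtained from the coefficients of $\widetilde{Z}$ in the nonlinear case, and from the data $X$, $Y$, $\varphi$ in the linear case, via Lemma 2.1 of \cite{KrupaSzmolyan2}. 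Once these identifications are in place, the remainder of the argument is a direct citation.
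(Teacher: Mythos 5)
Your proposal matches the paper's argument: the corollary is presented there as a direct consequence of Theorems \ref{teob} and \ref{teoc} (which give the transcritical non-degeneracy conditions after regularization and blow-up) combined with Theorem 2.1 of \cite{KrupaSzmolyan2}, with $\lambda$ taken from Lemma 2.1 of that reference. Your additional care about the normal-form coordinate change and the transversality of the sections only makes explicit what the paper leaves implicit, so the reasoning is correct and essentially identical.
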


\begin{figure}[h!]
  \center{\includegraphics[width=1.2\textwidth]{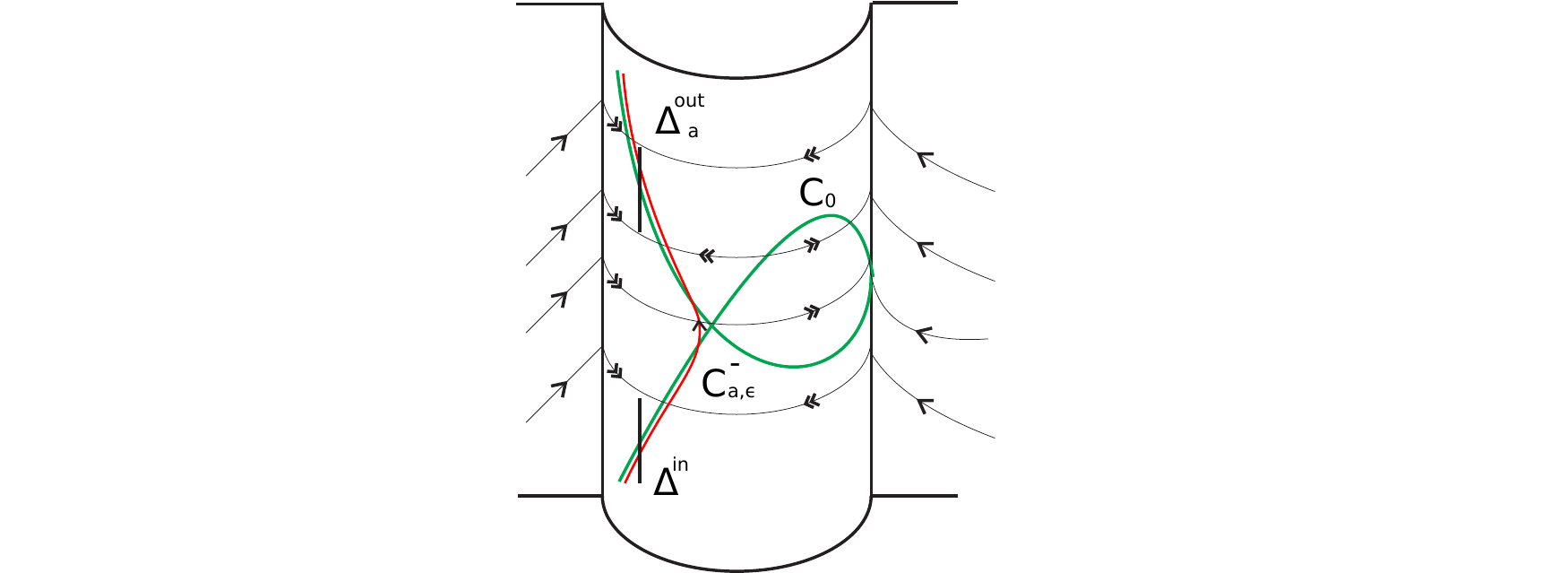}}\\
  \caption{\footnotesize{The red curve (which is not on the cylinder) represents an orbit for $\varepsilon > 0$ near a SF-transcritical singularity raised from a regularized PS-cusp singularity of the PSVF \eqref{eq-piecewise-smooth} ($\lambda < 1$).}}
  \label{fig_trans}
\end{figure}

\subsubsection{SF-Pitchfork case}


The critical manifold $C_{0}$ is the union of four branches, three stable ($C^{t}_{a}$, $C^{+}_{a}$ and $C^{-}_{a}$) and one unstable $(C^{t}_{r})$, where the superscript $\pm$ corresponds to the sign of the $x$-coordinate. 

Fenichel theory assures that, for $\varepsilon > 0$, outside of a small neighbourhood of $(0,0)$, there exist four branches of slow manifolds: three of them are attracting ($C^{t}_{a,\varepsilon}$, $C^{+}_{a,\varepsilon}$ and $C^{-}_{a,\varepsilon}$) and one repelling ($C^{t}_{r,\varepsilon}$).  

Let $J$ be a suitable neighborhood of $0\in\mathbb{R}$ and define the sections $\Delta^{t} = \{(x, -\rho), x \in J \},$ $\Delta^{+} = \{(\rho , y), y - \rho^{2} \in J \}$, and 
$\Delta^{-} = \{(-\rho , y), y - \rho^{2} \in J \}$.

Let $\pi^{t}$ be the transition map from $\Delta^{t}$ to $\Delta^{+}\cup\Delta^{-}$. This map is defined when $g^*_{x,y,\varepsilon}>0$ (expression given in Lemma 4.1 ($(4.4b)$) of \cite{KrupaSzmolyan2}). In the case of $g^*_{x,y,\varepsilon}<0$ we define transition maps $\pi^{\pm}:\Delta^{\pm}\rightarrow\Delta^{t}$. The dynamics of the nonlinear regularized system near a SF-pitchfork singularity is established in the following Corollary (see Figure \ref{fig_pitchfork}), which is a consequence of Theorems \ref{teoc} and the Theorem 4.1 of \cite{KrupaSzmolyan2}.  We remark that the constant $\lambda$ is given in Lemma 4.1 of \cite{KrupaSzmolyan2}.

\begin{corollary}
 Fix $\lambda\neq 0$.  Let $\varphi$ be a  monotonic transition function and consider $Z_{\varepsilon}$ a $\varphi$-nonlinear regularization of $X$ and $Y$. Suppose that the origin satisfies the conditions \eqref{eq-teoc-sf-pitchfork}, then there exists $\varepsilon_{0} > 0$ such that the following statements hold for $\varepsilon\in (0, \varepsilon_{0}]$:
 \begin{itemize}
     \item If $g^*_{x,y,\varepsilon}>0$ and $\lambda > 0$, then $\Delta^{t}$ (including the point $\Delta^{t}\cap C^{t}_{a,\varepsilon})$ is mapped by $\pi^{t}$ to an interval near $\Delta^{+}\cap C^{+}_{a,\varepsilon}$ of size $\mathcal{O}(e^{-\frac{C}{\varepsilon}})$, where $C$ is a positive constant.
     \item If $g^*_{x,y,\varepsilon}>0$ and $\lambda< 0$, then $\Delta^{t}$ (including the point $\Delta^{t} \cap C^{t}_{a,\varepsilon}$) is mapped by $\pi^{t}$ to an interval near $\Delta^{-}\cap S^{-}_{a,\varepsilon}$ of size $\mathcal{O}(e^{-\frac{C}{\varepsilon}})$, where $C$ is a positive constant.
     \item If $g^*_{x,y,\varepsilon}<0$, then $\Delta^{+}$ and $\Delta^{-}$ are mapped by $\pi^{+}$ and $\pi^{-}$, respectively, to intervals near $C^{t}_{a,\varepsilon}\cap\Delta^{t}$ of size $\mathcal{O}(e^{-\frac{C}{\varepsilon}})$, where $C$ is a positive constant.
 \end{itemize}
\end{corollary}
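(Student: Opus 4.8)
The plan is to reduce the statement to Theorem~4.1 of \cite{KrupaSzmolyan2} by first putting the blown-up slow--fast system into the pitchfork normal form, and then transporting the conclusions of that theorem back through the normalizing change of coordinates. First I would recall that, by Theorem~\ref{teoc}, if the origin satisfies the conditions \eqref{eq-teoc-sf-pitchfork} then, after the $\varphi$-nonlinear regularization $Z_{\varepsilon}(x,y)=\widetilde{Z}(\varphi(x/\varepsilon),x,y)$ and the directional blow-up $(\widetilde{x},\widetilde{y},\varepsilon)\mapsto(\varepsilon\widetilde{x},\widetilde{y},\varepsilon)$, the resulting planar slow--fast system has a SF-pitchfork singularity at the origin in the sense of \eqref{eq-sing-pitchfork-non-degeneracy-conditions}. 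Consequently, by item~(c) of Theorem~\ref{teo-fenichel-normal-form-fold}, there is a smooth change of coordinates, defined near the origin, taking the system to the normal form
\[
x' = x(y-x^{2}) + \lambda\varepsilon + \mathcal{O}(x^{2}y, xy^{2}, y^{3},\varepsilon); \qquad y' = \varepsilon\big(\pm1 + \mathcal{O}(x,y,\varepsilon)\big),
\]
which is precisely the class of systems analysed in Section~4 of \cite{KrupaSzmolyan2}. Here the constant $\lambda$ and the auxiliary quantity $g^{*}_{x,y,\varepsilon}$ are the ones identified in Lemma~4.1 of \cite{KrupaSzmolyan2}; I would make explicit how they are computed from the Taylor coefficients of $\widetilde{Z}^{1}$ at $(\varphi(0),0,0)$, so that the hypotheses ``$\lambda\neq 0$'' and the sign conditions on $g^{*}_{x,y,\varepsilon}$ have a direct meaning in terms of the original data.

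Next I would check that, under this normalizing coordinate change, the transversal sections $\Delta^{t}$, $\Delta^{+}$ and $\Delta^{-}$ appearing in the statement are taken (after possibly rescaling $\rho$) to the sections employed in \cite{KrupaSzmolyan2}: $\Delta^{t}$ transversal to the central attracting branch $C^{t}_{a}$, and $\Delta^{\pm}$ transversal to the fast fibres foliating a neighbourhood of the lateral branches $C^{\pm}_{a}$. Since the normal form is determined only up to the $\mathcal{O}(\cdot)$ remainders displayed above, the point is that Theorem~4.1 of \cite{KrupaSzmolyan2} is stated exactly for systems carrying such higher-order terms, so its conclusions transfer verbatim: the existence for $0<\varepsilon\le\varepsilon_{0}$ of the slow manifolds $C^{t}_{a,\varepsilon}$, $C^{\pm}_{a,\varepsilon}$ and $C^{t}_{r,\varepsilon}$ furnished by Fenichel's theorem away from the origin, the exponential contraction estimate $\mathcal{O}(e^{-C/\varepsilon})$ for the transition map $\pi^{t}$ when $g^{*}_{x,y,\varepsilon}>0$ (resp.\ for $\pi^{\pm}$ when $g^{*}_{x,y,\varepsilon}<0$), and the positioning of the image interval near $\Delta^{+}\cap C^{+}_{a,\varepsilon}$ or $\Delta^{-}\cap C^{-}_{a,\varepsilon}$ according to the sign of $\lambda$. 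Pulling these statements back through the coordinate change yields the three bullet points.

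The main obstacle I expect is bookkeeping rather than anything deep: one must verify that the blow-up chart and the normalizing diffeomorphism do not interchange the branches $C^{+}$ and $C^{-}$ nor reverse the sign of $\lambda$ or $g^{*}_{x,y,\varepsilon}$, and that the sections $\Delta^{t},\Delta^{\pm}$ remain transversal to the appropriate invariant objects after these transformations; a secondary point is to confirm that the monotonicity of $\varphi$ (used via $\varphi'(t)\neq 0$ on $(-1,1)$ in the proof of Theorem~\ref{teoc}) is exactly what guarantees the blown-up system is a genuine perturbation of the pitchfork normal form, and not of a degenerate variant, so that Theorem~4.1 of \cite{KrupaSzmolyan2} is applicable. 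Once these identifications are pinned down, no further computation is required, and the Corollary follows.
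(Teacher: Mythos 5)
Your proposal is correct and follows essentially the same route as the paper, which states this corollary as a direct consequence of Theorem \ref{teoc} (guaranteeing the SF-pitchfork non-degeneracy conditions after nonlinear regularization and blow-up) combined with Theorem 4.1 of \cite{KrupaSzmolyan2}. The additional bookkeeping you flag (tracking the sections and the signs of $\lambda$ and $g^{*}_{x,y,\varepsilon}$ through the normalizing change of coordinates) is exactly the content the paper leaves implicit.
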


\begin{figure}[h!]
  \center{\includegraphics[width=0.4\textwidth]{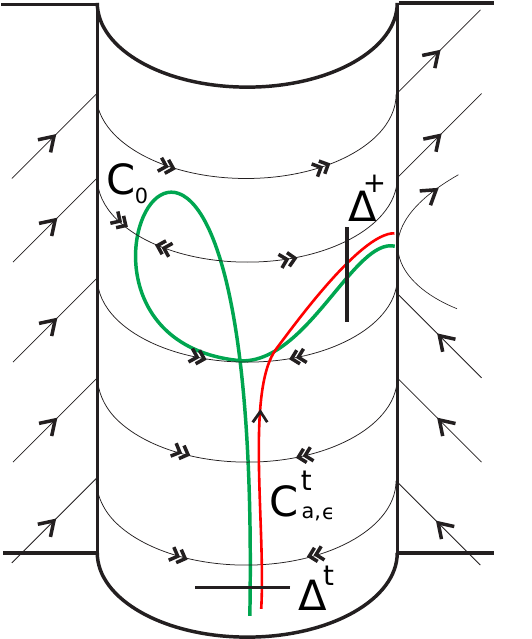}}\\
  \caption{\footnotesize{The red curve represents an orbit for $\varepsilon > 0$ of the nonlinearly regularized system $Z_\varepsilon(x,y)=\widetilde{Z}(\phi(\frac{x}{\varepsilon}),x,y),$ where $\widetilde{Z}(\lambda,x,y)=((x+\lambda-1)y-\lambda^3+e^{\lambda x+y}-1,1)$ near a SF-pitchfork singularity ($\lambda>0$).} Notice that the red curve is not on the cylinder} 
  \label{fig_pitchfork}
\end{figure}
\section{Normal forms of piecewise smooth vector fields and Slow-fast systems}\label{sec:normalforms}
\noindent
In this section we study normal forms of piecewise smooth vector fields (which can be found in \cite{GuardiaSearaTeixeira, Kuznetsov}) and we investigate which SF-normal form can be generated from each PS-normal form. The strategy is to use the ideas of Appendix \ref{sec-trans-func} in order to construct a suitable transition function that satisfies the conditions stated in Theorems \ref{teob} and \ref{teoc}. Structurally stable PSVF's and some codimention-1 bifurcations are considered.

We emphasize that, in our notation, $\Sigma^{s}$, $\Sigma^{e}$ and $\Sigma^{w}$ denote the classical Filippov sliding, escaping and sewing regions, respectively. On the other hand, we adopt the notation $\Sigma^{s}_{r}$, $\Sigma^{e}_{r}$ and $\Sigma^{w}_{r}$ in order to emphasize that such regions depend on the (linear or nonlinear) regularization adopted.
\subsection{Structurally stable PS-normal forms}\label{subsec-normal-form-nh}
Here we analyze normal forms of structurally stable piecewise smooth vector fields.
\begin{example}
A trivial example of piecewise smooth vector field that gives rise to a normally hyperbolic critical manifold after (\emph{linear}) regularization and blow-up is
\begin{equation}\label{eq-exe-piecewise-smooth-normal-hyper-1}
Z(x,y) = \left\{
             \begin{array}{ccccc}
               X(x,y) & = & \Big{(}\alpha, 1\Big{)}, & \text{if} & x > 0; \\
               Y(x,y) & = & \Big{(}\beta,1\Big{)}, & \text{if} & x < 0;
             \end{array}
           \right.
\end{equation}
with $\operatorname{sgn}(\alpha)\neq\operatorname{sgn}(\beta)$ and $\varphi(t)$ is a monotonic transition function given by
\begin{equation}\label{exe-monotonic-trans-func}
\varphi(t) = \left\{
  \begin{array}{rcl}
    -1, & \text{if} &  t\leq -1; \\
     \omega t^{4} -\frac{t^{3}}{2} - 2\omega t^{2} + \frac{3t}{2} + \omega, & \text{if} & -1 \leq t \leq 1; \\
    1, & \text{if} & t\geq 1;
  \end{array}
\right.
\end{equation}
where $\omega = \frac{\beta + \alpha}{\beta - \alpha}$. For example, if $\alpha = 1$ and $\beta = -1$, we obtain a repelling normally hyperbolic critical manifold. See Figure \ref{fig-nh-escape}.

\begin{figure}[h!]
  \center{\includegraphics[width=0.48\textwidth]{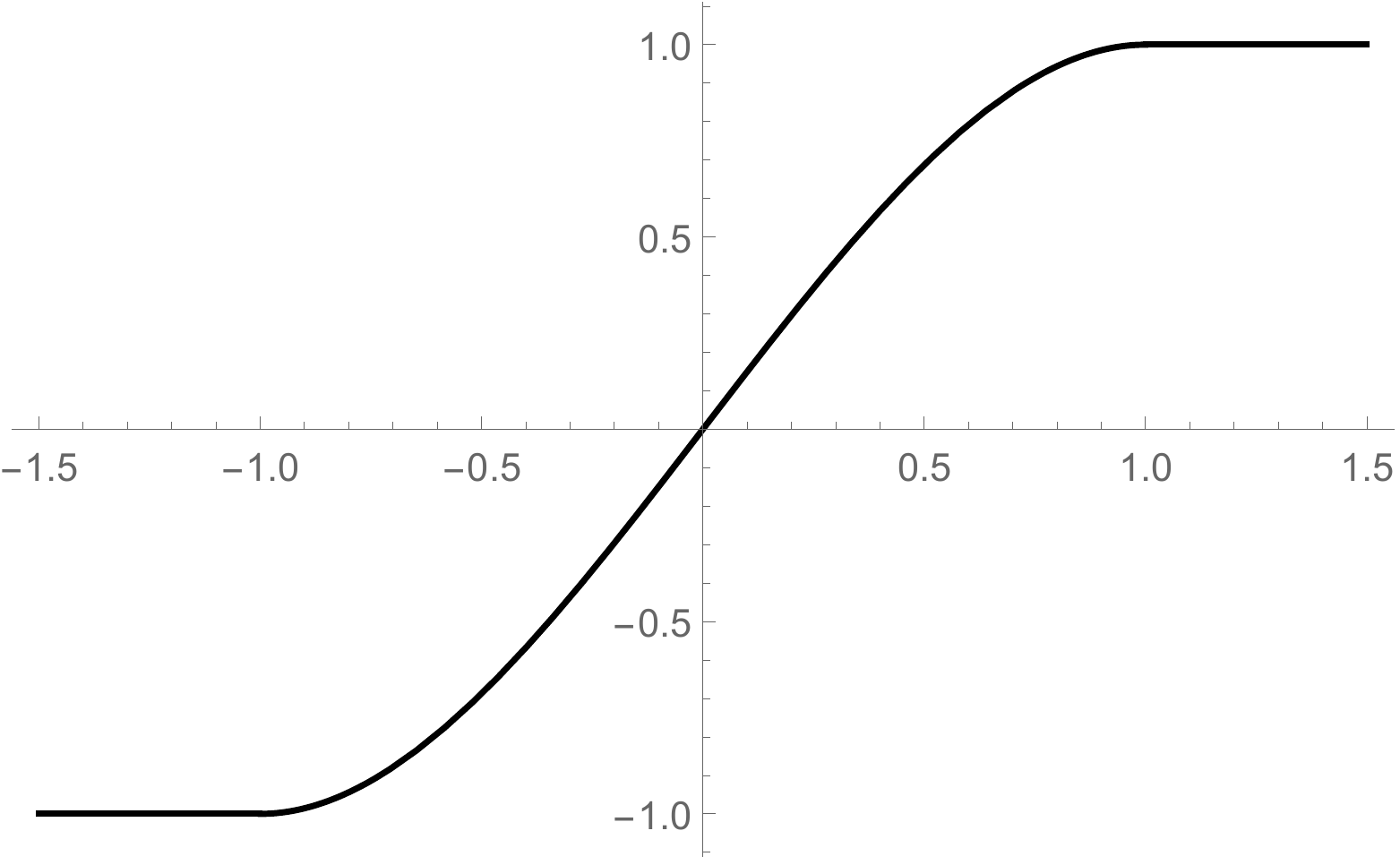}\hspace{0.7cm}\includegraphics[width=0.35\textwidth]{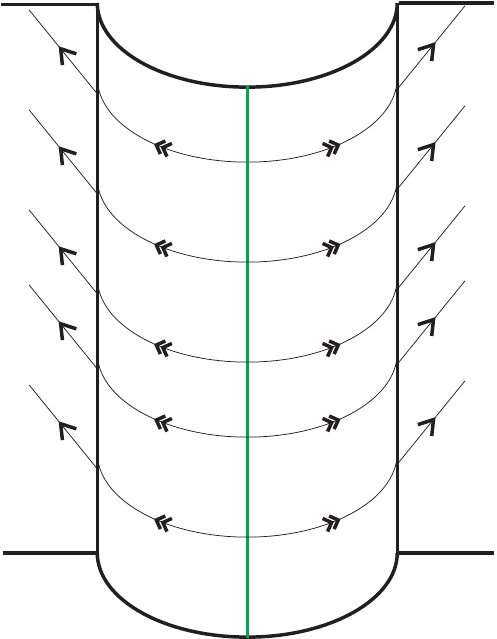}}\\
  \caption{\footnotesize{Piecewise smooth vector field \eqref{eq-exe-piecewise-smooth-normal-hyper-1} regularized with the transition function \eqref{exe-monotonic-trans-func}. The critical manifold is highlighted in green.}}
  \label{fig-nh-escape}
\end{figure}

\end{example}

\begin{example}
If the transition function $\varphi$ is non monotonic, it is possible to generate a slow-fast system having normally hyperbolic critical manifold from a PSVF such that $\Sigma^{w} = \Sigma$. Indeed, using the ideas of Appendix \ref{sec-trans-func}, in the interval $[-1,1]$ we impose the conditions
\begin{equation}\label{exe-trans-function-construc}
\begin{split}
\varphi'(-1) = 0; \ \ \varphi'(1) = 0; \ \ \varphi'(0) \neq 0;  \\
\varphi(-1) = -1; \ \ \varphi(1) = 1; \ \ |\varphi(0)| > 1; 
\end{split}
\end{equation}
and then we get the non monotonic transition function \eqref{exe-monotonic-trans-func}. If we consider linear regularization, we have to impose $|\varphi(0)| = |\omega| > 1$. For example, consider the PSVF
\begin{equation}\label{eq-exe-piecewise-smooth-nh-sew}
Z(x,y) = \left\{
             \begin{array}{ccccc}
               X(x,y) & = & \Big{(}1,0\Big{)}, & \text{if} & x > 0; \\
               Y(x,y) & = & \Big{(}2,0\Big{)}, & \text{if} & x < 0.
             \end{array}
           \right.
\end{equation}

The slow-fast system obtained after (\emph{linear}) regularization and blow-up is
\begin{equation}\label{eq-exe-piecewise-smooth-nh-sew-slow-fast}
               \varepsilon\dot{x} = -\frac{3x}{4} + 3x^{2} + \frac{x^{3}}{4} - \frac{3x^{4}}{2}; \ \ \
               \dot{y} = 0.
\end{equation}

Note that $|\varphi(0)| = |\omega| = 3$. Concerning the PSVF \eqref{eq-exe-piecewise-smooth-nh-sew}, it is easy to check that the discontinuity locus $\Sigma$ is a Filippov sewing region. However, the critical manifold of the slow-fast system \eqref{eq-exe-piecewise-smooth-nh-sew-slow-fast} has two normally hyperbolic components. By Theorem \ref{mtheorem-A}, since $\varphi(0) = 3 > 1$, we obtain a $\varphi$-sliding region. In particular, $\Sigma = \Sigma^{s}_{r}$. See Figure \ref{fig-nh-sewing}.

\begin{figure}[h!]
  \center{\includegraphics[width=0.48\textwidth]{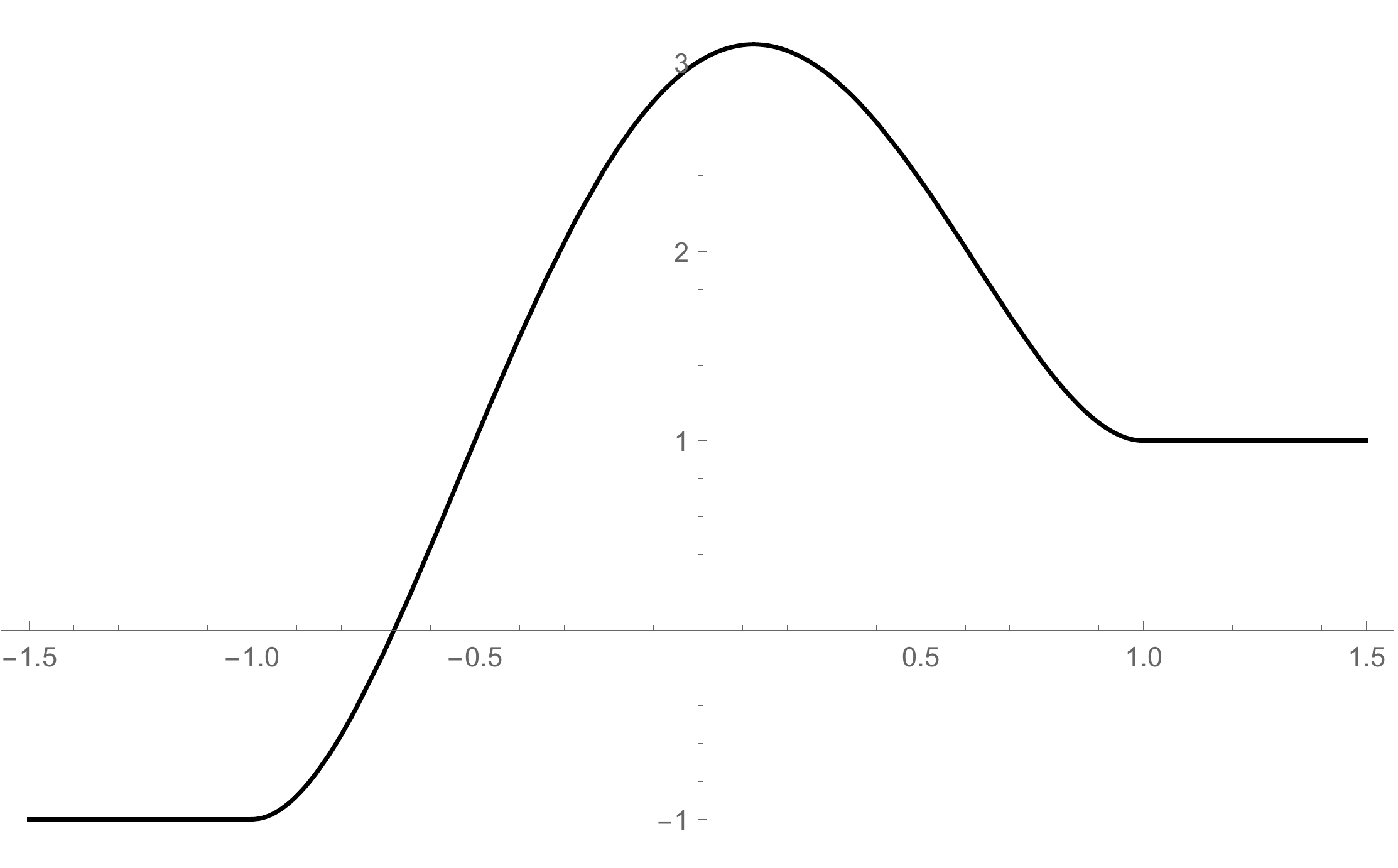}\hspace{0.7cm}\includegraphics[width=0.40\textwidth]{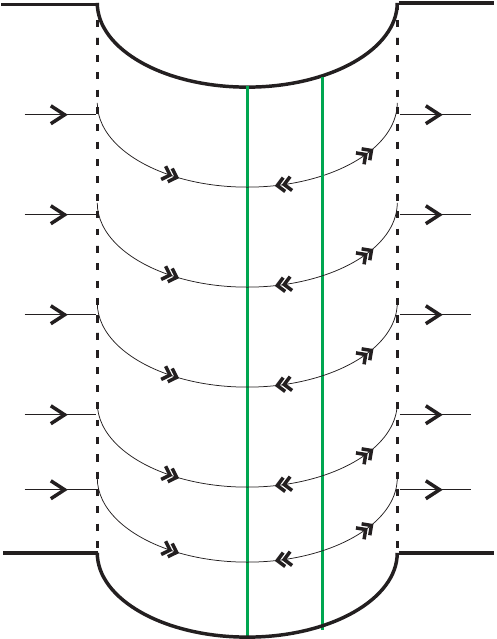}}\\
  \caption{\footnotesize{Graphic of the transition function given by \eqref{exe-monotonic-trans-func} with $\omega = 3$ (left) and regularization of the piecewise smooth vector field \eqref{eq-exe-piecewise-smooth-nh-sew} (right). The critical manifold is highlighted in green.}}
  \label{fig-nh-sewing}
\end{figure}

\end{example}

\begin{example}
Let $Z$ be the piecewise smooth vector field given by
\begin{equation}\label{eq-exe-piecewise-smooth-nh-fold}
Z(x,y) = \left\{
             \begin{array}{ccccc}
               X(x,y) & = & \Big{(}\alpha y, 1\Big{)}, & \text{if} & x > 0; \\
               Y(x,y) & = & \Big{(}\beta, 0 \Big{)}, & \text{if} & x < 0;
             \end{array}
           \right.
\end{equation}
where $\alpha \neq 0 \neq \beta$ and let $\varphi$ be the transition function given by
\begin{equation}\label{exe-non-monotonic-trans-func-NH-FOLD}
\varphi(t) = \left\{
  \begin{array}{rcl}
    -1, & \text{if} &  t\leq -1; \\
      t^{4} - \frac{t^{3}}{2} - 2t^{2} + \frac{3t}{2} + 1, & \text{if} & -1 \leq t \leq 1; \\
    1, & \text{if} & t\geq 1.
  \end{array}
\right.
\end{equation}

After (\emph{linear}) regularization and blow-up, we obtain the slow-fast system
\begin{equation}\label{eq-exe-piecewise-smooth-nh-FOLD-slow-fast}
\left\{
             \begin{array}{rcl}
               \varepsilon\dot{x} & = & \frac{1}{4}\Big{(}4 \alpha y + x(x-1)^{2}(2x + 3) (\alpha y-\beta)\Big{)}; \\
               \dot{y} & = & 1 + \frac{3x}{4} - x^{2} - \frac{x^{3}}{4} + \frac{x^{4}}{2}.
             \end{array}
           \right.
\end{equation}

Observe that $\varphi'\Big{(}\frac{3}{8}\Big{)} = 0$. Therefore,  the critical manifold of \eqref{eq-exe-piecewise-smooth-nh-FOLD-slow-fast} has a non normally hyperbolic point for $x = \frac{3}{8}$. It can be shown that such point is a SF-fold. By Theorem \ref{mtheorem-A}, since $\varphi\Big{(}\frac{3}{8}\Big{)} > 1$, it follows that $\Sigma^{s} \varsubsetneq \Sigma^{s}_{r}$. See Figure \ref{fig-nh-fold}.

\begin{figure}[h!]
  \center{\includegraphics[width=0.48\textwidth]{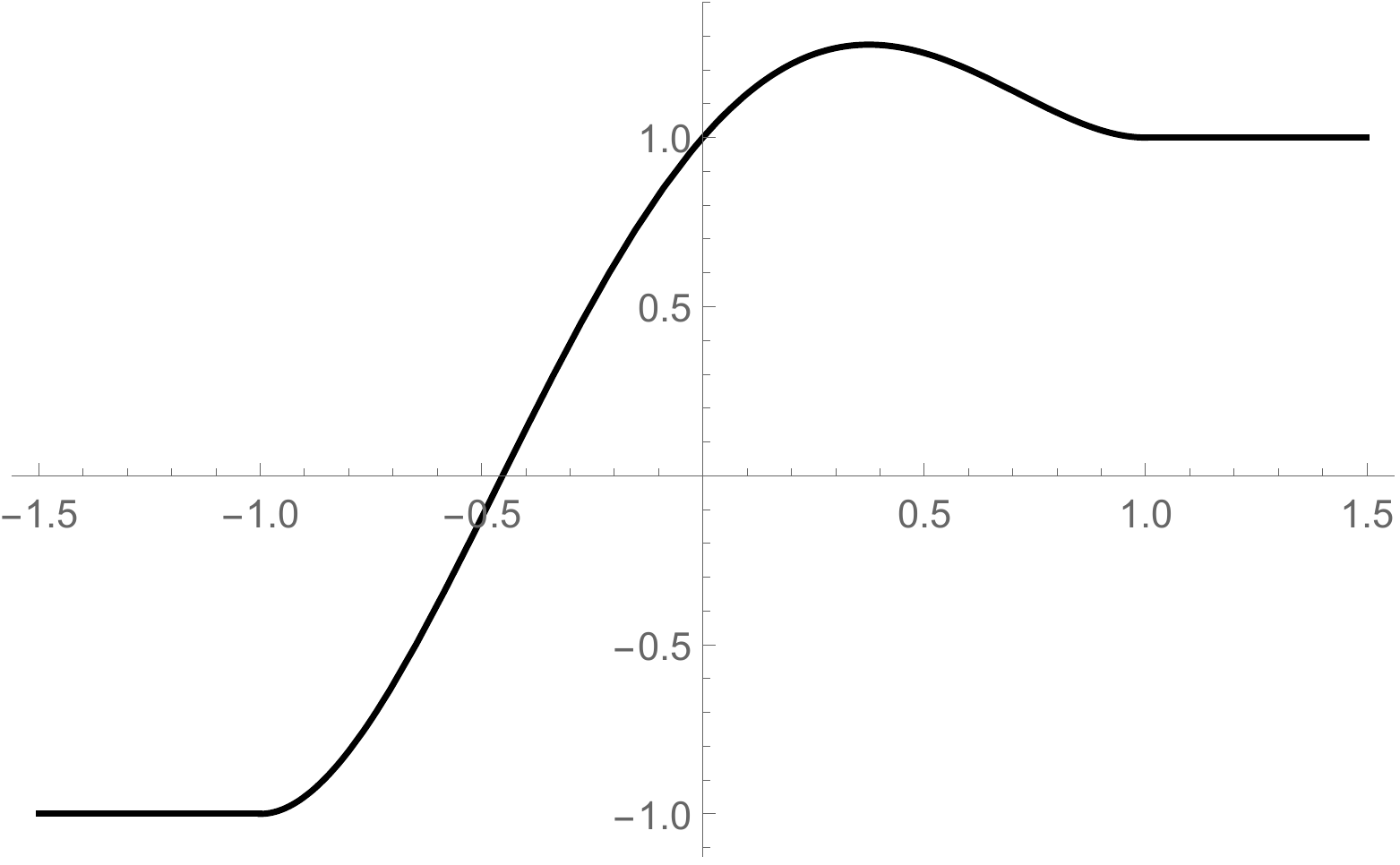}\hspace{0.3cm}\includegraphics[width=0.40\textwidth]{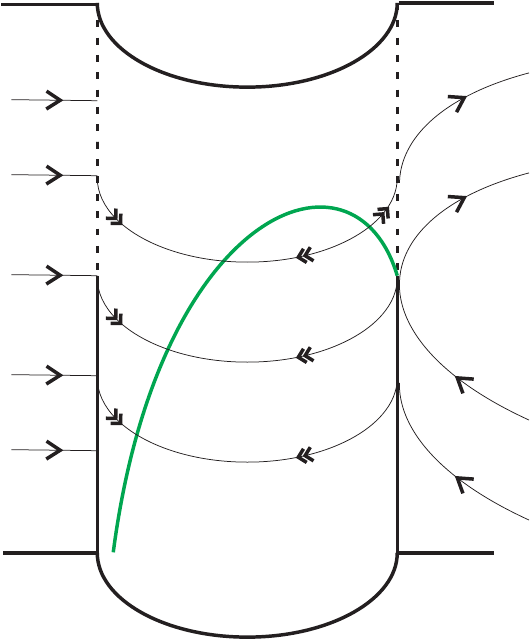}}\\
  \caption{\footnotesize{Non monotonic transition function \eqref{exe-non-monotonic-trans-func-NH-FOLD} (left) and regularization of the piecewise smooth vector field \eqref{eq-exe-piecewise-smooth-nh-fold} (right). The critical manifold is highlighted in green.}}
  \label{fig-nh-fold}
\end{figure}

\end{example}

\begin{example}
This example concerns a PS-singularity that is a hyperbolic equilibrium point of the sliding vector field $Z^{\Sigma}(x,y)$. Let $Z$ be the piecewise smooth vector field
\begin{equation}\label{eq-exe-piecewise-smooth-pseudo-sing}
Z(x,y) = \left\{
             \begin{array}{ccccc}
               X(x,y) & = & \Big{(}-1, y\Big{)}, & \text{if} & x > 0; \\
               Y(x,y) & = & \Big{(}1,y\Big{)}, & \text{if} & x < 0;
             \end{array}
           \right.
\end{equation}
and $\varphi(t)$ is a monotonic transition function given by \eqref{exe-monotonic-trans-func}, with $\omega = 0$. After (\emph{linear}) regularization and blow-up, we obtain the slow-fast system
\begin{equation}\label{eq-exe-piecewise-smooth-nh-pseudo-sing-slow-fast}
               \varepsilon\dot{x} = \frac{x^{3}}{2}-\frac{3x}{2}; \ \ \
               \dot{y} = y.
\end{equation}

See Figure \ref{fig-nh-pseudo-sing}.

\begin{figure}[h]
  \center{\includegraphics[width=0.40\textwidth]{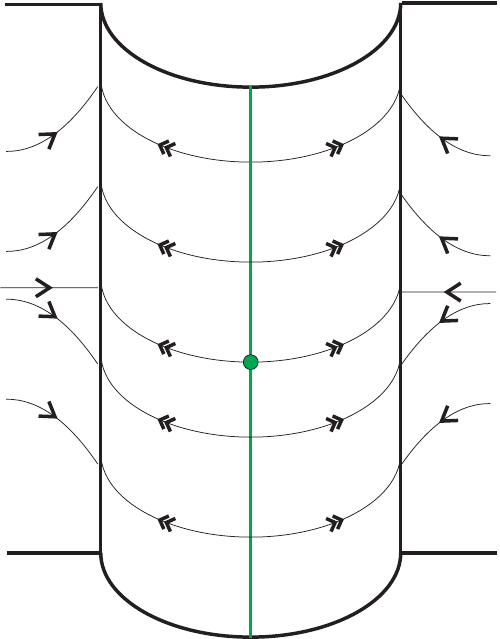}}\\
  \caption{\footnotesize{Regularized piecewise smooth vector field \eqref{eq-exe-piecewise-smooth-pseudo-sing} using the transition function \eqref{exe-monotonic-trans-func} with $\omega = 0$. The critical manifold is highlighted in green.}}
  \label{fig-nh-pseudo-sing}
\end{figure}

\end{example}

In what follows, we start the study of (linear and nonlinear) regularizations of codimention-1 bifurcations of PSVF's.

\subsection{Codimension 1 bifurcation: PS-cusp singularity}\label{subsec-normal-form-cusp}
\noindent

\begin{example}
Consider the normal form of a PS-cusp singularity
\begin{equation}\label{eq-exe-piecewise-smooth-pwsvf-cusp-case-1}
Z(x,y) = \left\{
             \begin{array}{ccccc}
               X(x,y) & = & \Big{(}-y^{2} + \lambda, 1\Big{)}, & \text{if} & x > 0; \\
               Y(x,y) & = & \Big{(}1, 1\Big{)}, & \text{if} & x < 0.
             \end{array}
           \right.
\end{equation}

For $\lambda = 0$, the origin is a PS-cusp singularity and $\Sigma^{s} = \Sigma\backslash\{0\}$. For $\lambda < 0$, $\Sigma^{s} = \Sigma$. Finally, for $\lambda > 0$ the points $(0,\pm\sqrt{\lambda})$ are PS-folds of $Z$, $\Sigma^{w} = \{-\sqrt{\lambda} < y < \sqrt{\lambda}\}$ and $\Sigma^{s} = \Sigma\backslash\Sigma^{w}$. See figure \ref{fig-bif1-cusp-sliding1}.

Combining the ideas of Appendix \ref{sec-trans-func} and the conditions given by Theorem \ref{teob}, we construct a transition function $\varphi$ given by
\begin{equation}\label{exe-non-monotonic-trans-func-cusp-case-1}
\varphi(t) = \left\{
  \begin{array}{rcl}
    -1, & \text{if} &  t\leq -1; \\
      -\frac{3 t^{5}}{2}+t^{4}+\frac{5 t^{3}}{2}-2 t^{2}+1, & \text{if} & -1 \leq t \leq 1; \\
    1, & \text{if} & t\geq 1;
  \end{array}
\right.
\end{equation}
in which $t_{0} = 0$ and $t_{1} = \frac{8}{15}$ are local maximum and minimum, respectively. See Figure \ref{fig-cusp-nao-preserva-transcritica-caso-1-trans-func}.

\begin{figure}[h]
  \center{\includegraphics[width=0.48\textwidth]{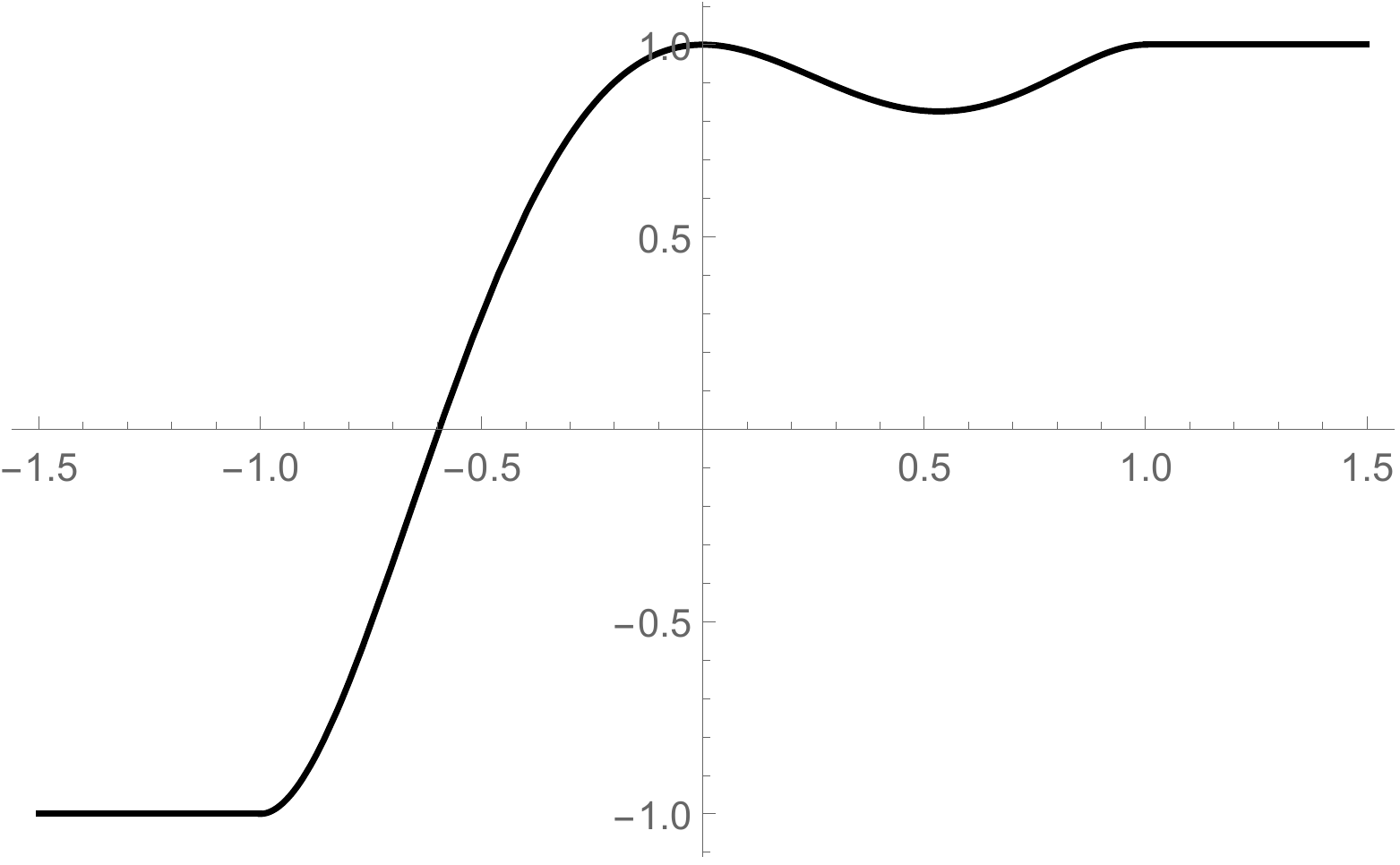}}\\
  \caption{\footnotesize{Graphic of the non monotonic transition function \eqref{exe-non-monotonic-trans-func-cusp-case-1}.}}
  \label{fig-cusp-nao-preserva-transcritica-caso-1-trans-func}
\end{figure}

After regularization and blow-up, one obtains the slow-fast system
\begin{equation}\label{eq-exe-piecewise-smooth-cusp-case-1-slow-fast}
\left\{
             \begin{array}{rcl}
               \varepsilon\dot{x} & = & \frac{1}{4} \Bigg{(}\lambda \Big{(}4- x^{2}(x-1)^{2}(3x + 4)\Big{)}+ x^{2}(x-1)^{2}(3x + 4)\left(y^{2}+1\right)-4 y^{2}\Bigg{)}; \\
               \dot{y} & = & 1.
             \end{array}
           \right.
\end{equation}

Observe that for $x = 0$ and $x = \frac{8}{15}$, the critical manifold presents non normally hyperbolic points. In particular, the origin is a transcritical singularity that is destroyed for $\lambda\neq 0$. Observe that $\varphi(t) \leq 1$. See Figure \ref{fig-bif1-cusp-sliding1}.

\begin{figure}[h]
  \center{\includegraphics[width=1\textwidth]{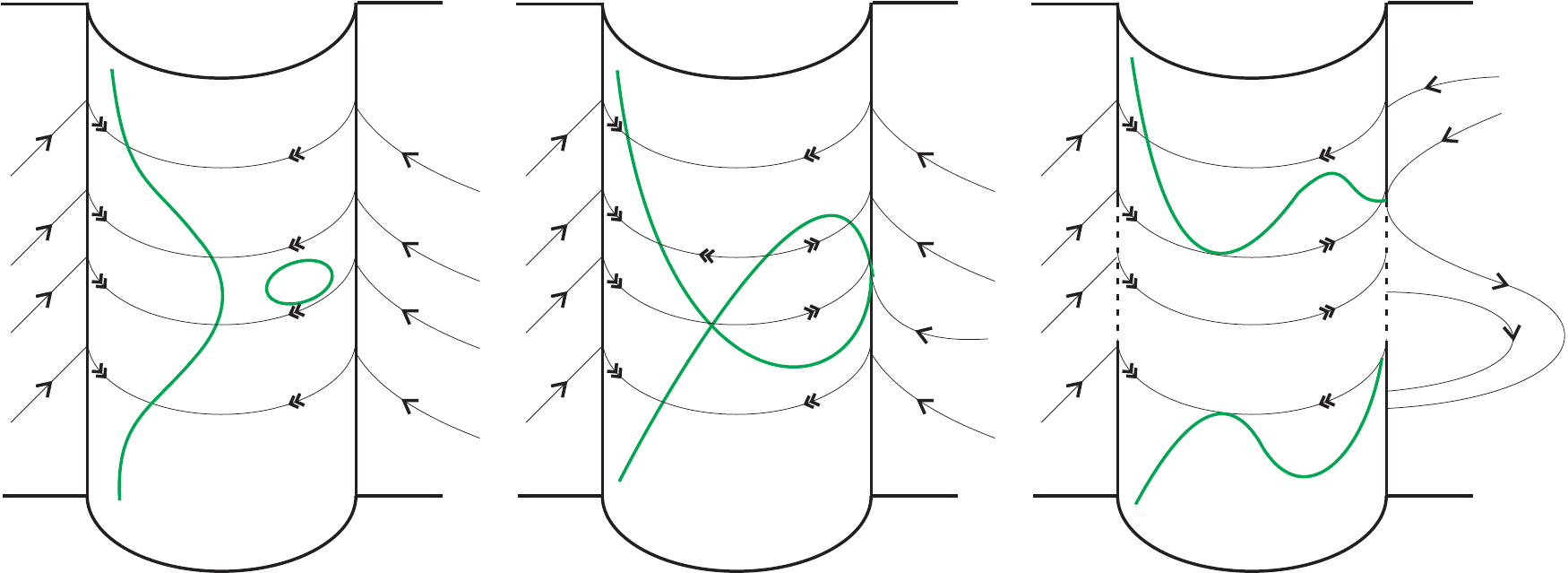}}\\
  \caption{\footnotesize{Regularized piecewise smooth vector field \eqref{eq-exe-piecewise-smooth-pwsvf-cusp-case-1} using the transition function \eqref{exe-non-monotonic-trans-func-cusp-case-1} for $\lambda < 0$ (left), $\lambda = 0$ (center) and $\lambda > 0$ (right). The green curve is the critical set.}}
  \label{fig-bif1-cusp-sliding1}
\end{figure}

\end{example}

In the last example, we have destroyed the SF-transcritical singularity by perturbing the parameter $\lambda$. In general, we have the following result.

\begin{proposition}\label{prop:sing_trans_bif}
Let $Z_\lambda$, $\lambda\in(-1,1)$ be the piecewise smooth vector field \eqref{eq-exe-piecewise-smooth-pwsvf-cusp-case-1} and let $\varphi$ be a non-monotonic transition function such that $\varphi(0)=1,$ $\varphi'(0)=0,$ and $\varphi''(0)<0$. Then the linearly regularized system associated to $Z_\lambda$ has a SF-transcritical singularity at origin for $\lambda=0$, which disappears for $\lambda\neq 0$.
\end{proposition}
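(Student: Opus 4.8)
The plan is to read everything off from the critical set of the blown-up regularized system \eqref{eq-slow-fast-pwsvf-planar} attached to $Z_\lambda$. For the PSVF \eqref{eq-exe-piecewise-smooth-pwsvf-cusp-case-1} one has $f_1(x,y)=-y^2+\lambda$, $g_1(x,y)=1$ and $f_2=g_2=1$, so the system reads
$$\varepsilon\dot{x}=\frac{1}{2}\big[(1+\varphi(x))(\lambda-y^2)+(1-\varphi(x))\big]=:f(x,y,\varepsilon),\qquad \dot{y}=1=:g(x,y,\varepsilon).$$
Since $f$ does not depend on $\varepsilon$ and $g\equiv 1$, the requirement $g(0,0,0)\neq 0$ is automatic and only the $2$-jet of $f$ at the origin must be inspected.

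First I would dispose of the case $\lambda=0$. Using $\varphi(0)=1$, $\varphi'(0)=0$ and $\varphi''(0)<0$, an elementary Taylor computation gives $f(0,0,0)=0$, $f_x(0,0,0)=0$, $f_y(0,0,0)=0$, $f_{xy}(0,0,0)=0$, $f_{xx}(0,0,0)=-\frac{1}{2}\varphi''(0)\neq 0$ and $f_{yy}(0,0,0)=-2$, hence $\det\operatorname{Hes}(f)(0,0,0)=\varphi''(0)<0$. These are precisely the non-degeneracy conditions \eqref{eq-sing-transcritical-non-degeneracy-conditions}, so the origin is an SF-transcritical singularity. (Equivalently: the origin is a regular-cusp singularity of $Z_0$ and $g_1(0,0)\varphi''(0)f_{1,yy}(0,0)=-2\varphi''(0)>0$, so the conclusion is immediate from Theorem \ref{teob} and its corollary.)

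Next, for $\lambda\neq 0$ I would describe $C_0$ near the origin and argue that no SF-transcritical point survives. As $\varphi(0)=1$, we have $1+\varphi(x)\neq 0$ for $x$ near $0$, and there the equation $f(x,y,0)=0$ is equivalent to
$$y^2=\lambda-u(x),\qquad u(x):=\frac{\varphi(x)-1}{\varphi(x)+1},$$
with $u(0)=u'(0)=0$ and $u''(0)=\frac{1}{2}\varphi''(0)<0$, so $u(x)<0$ for $0<|x|$ small. If $\lambda<0$, then $\lambda-u(x)<\lambda<0$ near $x=0$, so $C_0$ is empty in a sufficiently small neighbourhood of the origin and there is no SF-singularity at all. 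If $\lambda>0$, then near the origin $C_0$ is the union of the two smooth graphs $y=\pm\sqrt{\lambda-u(x)}$, neither of which passes through the origin; on $C_0$ one has $f_x=-\varphi'(x)/(1+\varphi(x))$, so the non-normally-hyperbolic points of $C_0$ are exactly those with $\varphi'(x)=0$, i.e. (as $\varphi''(0)\neq 0$ makes $x=0$ an isolated zero of $\varphi'$) the points $(0,\pm\sqrt{\lambda})$; but there $f_y(0,\pm\sqrt{\lambda},0)=\mp 2\sqrt{\lambda}\neq 0$ and $f_{xx}(0,\pm\sqrt{\lambda},0)=-\frac{1}{2}\varphi''(0)\neq 0$, so by \eqref{eq-sing-fold-non-degeneracy-conditions} these are SF-folds rather than SF-transcritical points, and they lie at distance $\sqrt{\lambda}$ from the origin. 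In either case the SF-transcritical singularity is gone for $\lambda\neq 0$.

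There is no genuinely hard step: the whole proof is a Taylor expansion of $f$ at the origin plus the elementary alternative $\lambda\gtrless 0$. The one point deserving a little care is the meaning of ``disappears'', which I would make precise as: for each $\lambda\neq 0$ the regularized slow-fast system has no SF-transcritical singularity in a sufficiently small neighbourhood of the origin. The mild subtlety is that for $\lambda>0$ two new SF-fold singularities bifurcate off at $(0,\pm\sqrt{\lambda})$, so one must explicitly check that they are of fold type, not transcritical.
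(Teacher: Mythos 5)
Your proof is correct and, for the existence part at $\lambda=0$, essentially coincides with the paper's: the paper checks the three conditions of Theorem \ref{teob}(c) (in particular noting that $\varphi(0)=\frac{1+\lambda}{1-\lambda}=1$ forces $\lambda=0$, and that the determinant reduces to $\varphi''(0)/4<0$), while you verify the equivalent non-degeneracy conditions \eqref{eq-sing-transcritical-non-degeneracy-conditions} directly by Taylor expansion of $f$. Where you genuinely go further is the ``disappears for $\lambda\neq 0$'' claim: the paper's proof only observes that the Theorem \ref{teob} condition $\varphi(0)=\frac{1+\lambda}{1-\lambda}$ fails for $\lambda\neq 0$ (i.e.\ the origin leaves the critical set), whereas you describe the whole local bifurcation of $C_0$, showing that for $\lambda<0$ the critical set is empty near the origin and for $\lambda>0$ it splits into two branches whose only non--normally hyperbolic points are SF-folds at $(0,\pm\sqrt{\lambda})$, so no transcritical point survives anywhere nearby. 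That buys a stronger and more honest reading of ``disappears'', at the cost of a slightly longer argument.

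One small slip: from $u(x)<0$ for $0<|x|$ small you write $\lambda-u(x)<\lambda$, but the inequality goes the other way ($u(x)<0$ gives $\lambda-u(x)>\lambda$). The conclusion you need for $\lambda<0$ is nevertheless correct: since $u(x)\to 0$ as $x\to 0$, one has $\lambda-u(x)<0$ for $|x|$ small (indeed $\lambda-u(x)=\lambda+|u(x)|<0$ once $|u(x)|<|\lambda|$), so $y^2=\lambda-u(x)$ has no real solution and $C_0$ is empty near the origin. Just fix the direction of that intermediate inequality.
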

\begin{proof}
Consider the piecewise smooth vector field \eqref{eq-exe-piecewise-smooth-pwsvf-cusp-case-1} with $\lambda\in(-1,1)$. Then $f_1(x,y)=-y^2+\lambda$, $f_2(x,y)=1$, $g_1(x,y)=1,$ and $g_2(x,y)=1.$ Thus, we get that
\begin{itemize}
    \item $(f_1-g_1)(0,0)=\lambda-1\neq 0;$
    \item $\varphi(0)=\frac{1+\lambda}{1-\lambda}=1$ if, and only if, $\lambda=0.$
    \item $\scriptsize\left|
  \begin{array}{cc}
\frac{1}{4}\Big{(}(f_{1} - g_{1})\varphi''(0)\Big{)} & 0 \\
    0 & \frac{1}{4}\Big{(}(1 + \varphi(0))f_{1,yy} + (1 - \varphi(0))g_{1,yy}\Big{)} \\
  \end{array}
\right|=\frac{\varphi''(0)}{4} <  0.$
\end{itemize}
Since $\varphi'(0)=0$ and $\varphi''(0)<0$, then Theorem \ref{teob} implies that the origin is a transcritical singularity provide that $\lambda=0.$
\end{proof}

In the next example we perturb the PSVF and the transition function simultaneously, in such a way that the SF-transcritical singularity persists for $\lambda \neq 0$.

\begin{example}
Consider once again the piecewise smooth vector field \eqref{eq-exe-piecewise-smooth-pwsvf-cusp-case-1}, whose bifurcation diagram is given in the Figure \ref{fig-bif1-cusp-sliding1}. In this example we adopt the transition function
\begin{equation}\label{exe-non-monotonic-trans-func-cusp-case-1-preserva-transcritica}
\varphi_\lambda(t) = \left\{
  \begin{array}{rcl}
    -1, & \text{if} &  t\leq -1; \\
      -\frac{3 t^{5}}{2} + \frac{(\lambda+1) t^{4}}{1-\lambda}+\frac{5 t^{3}}{2} -\frac{2 (\lambda + 1) t^{2}}{1-\lambda}+\frac{\lambda+1}{1-\lambda}, & \text{if} & -1 \leq t \leq 1; \\
    1, & \text{if} & t\geq 1;
  \end{array}
\right.
\end{equation}
which is a perturbation of the transition function \eqref{exe-non-monotonic-trans-func-cusp-case-1} and its bifurcation diagram for small $\lambda$ can be seen in Figure \ref{fig-cusp-preserva-transcritica-caso-1-trans-func}. For $t = 0$ and $t = -\frac{8 (\lambda+1)}{15 (\lambda-1)}$, the derivative of $\varphi_\lambda$ is zero.
\begin{figure}[h]
  \center{\includegraphics[width=0.315\textwidth]{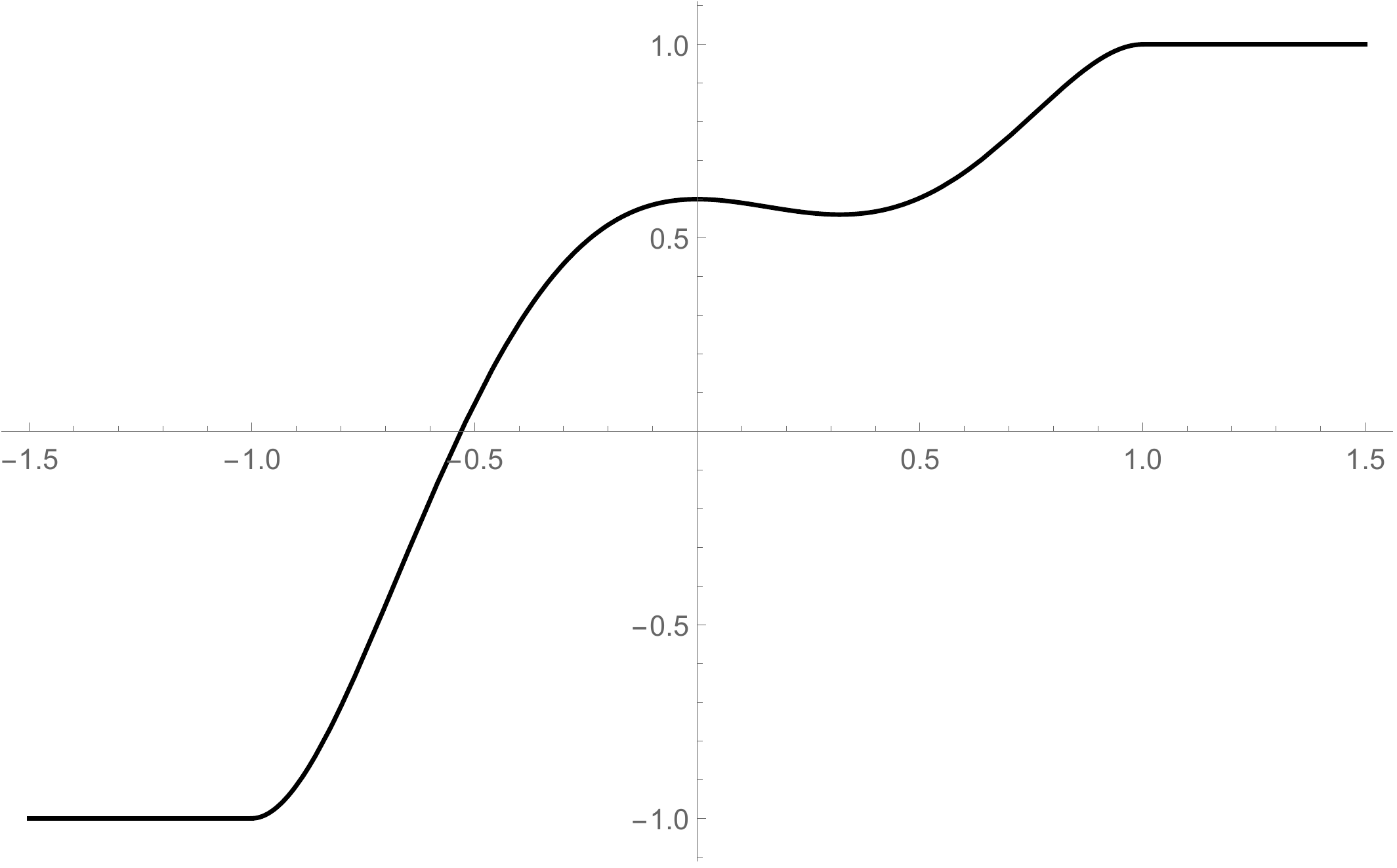}\hspace{0.3cm}\includegraphics[width=0.315\textwidth]{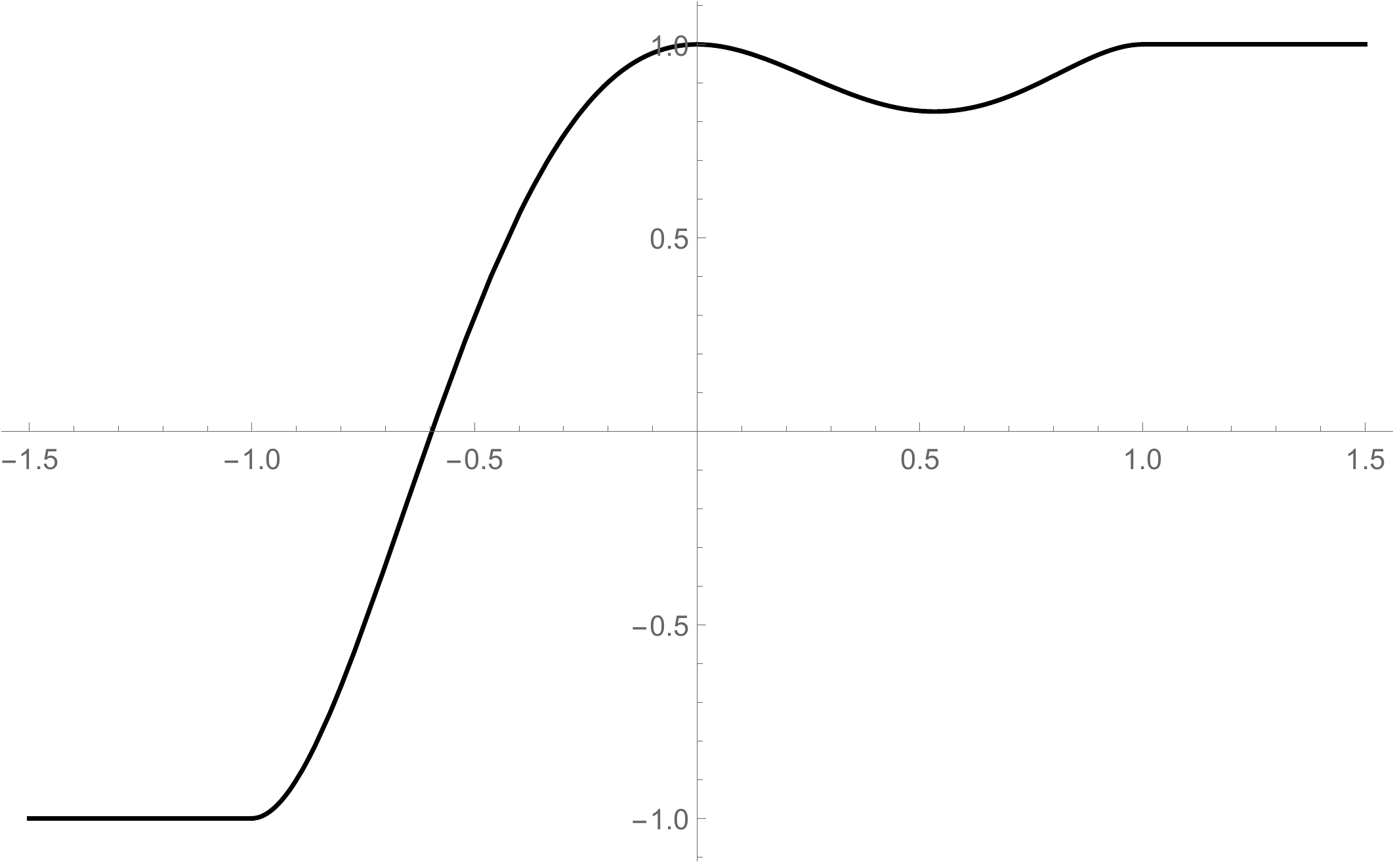}\hspace{0.3cm}\includegraphics[width=0.315\textwidth]{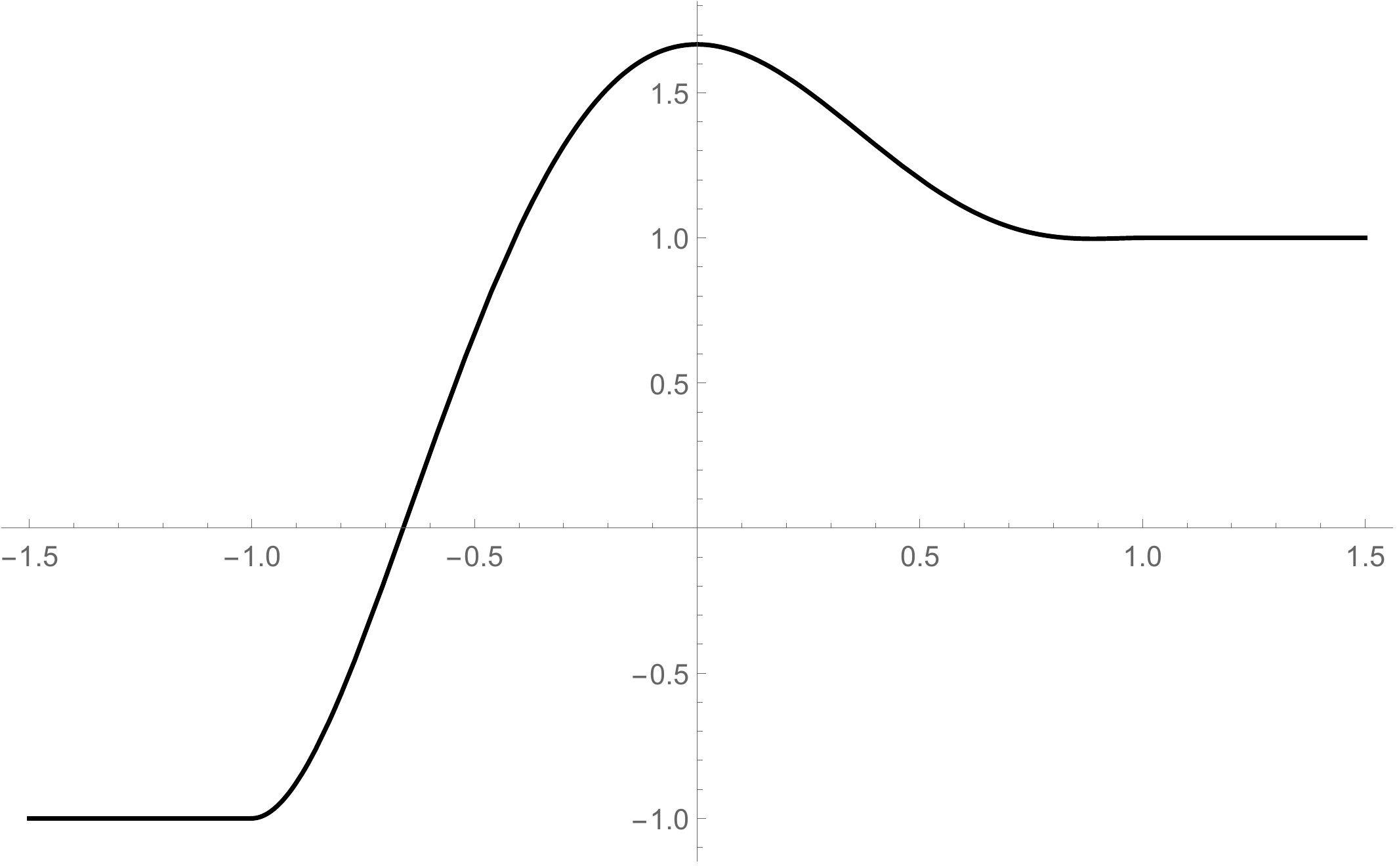}}\\
  \caption{\footnotesize{Graphic of the transition function \eqref{exe-non-monotonic-trans-func-cusp-case-1-preserva-transcritica} for $\lambda < 0$ (left), $\lambda = 0$ (center) and $\lambda > 0$ (right).}}
  \label{fig-cusp-preserva-transcritica-caso-1-trans-func}
\end{figure}

After regularization and blow-up, we obtain the slow-fast system
\begin{equation}\label{exe-non-monotonic-SF-cusp-case-1-preserva-transcritica}
\left\{
             \begin{array}{rcl}
               \varepsilon\dot{x} & = & \frac{4 y^{2}-x^{2} \Big{(}\lambda (x+1)^{2} (3 x-4)-(x-1)^{2} (3 x+4)\Big{)} \Big{(}\lambda-y^{2}-1\Big{)}}{4 (\lambda-1)}; \\
               \dot{y} & = & 1;
             \end{array}
           \right.
\end{equation}
whose critical manifold is non normally hyperbolic at the points such that $x = 0$ and $x = -\frac{8 (\lambda+1)}{15 (\lambda-1)}$. In particular, the origin will be a SF-transcritical singularity. For $\lambda > 0$, observe that $\Sigma^{s}\varsubsetneq\Sigma^{s}_{r}$. See Figure \ref{fig-bif1-cusp-sliding2}.
\begin{figure}[h]
  \center{\includegraphics[width=1\textwidth]{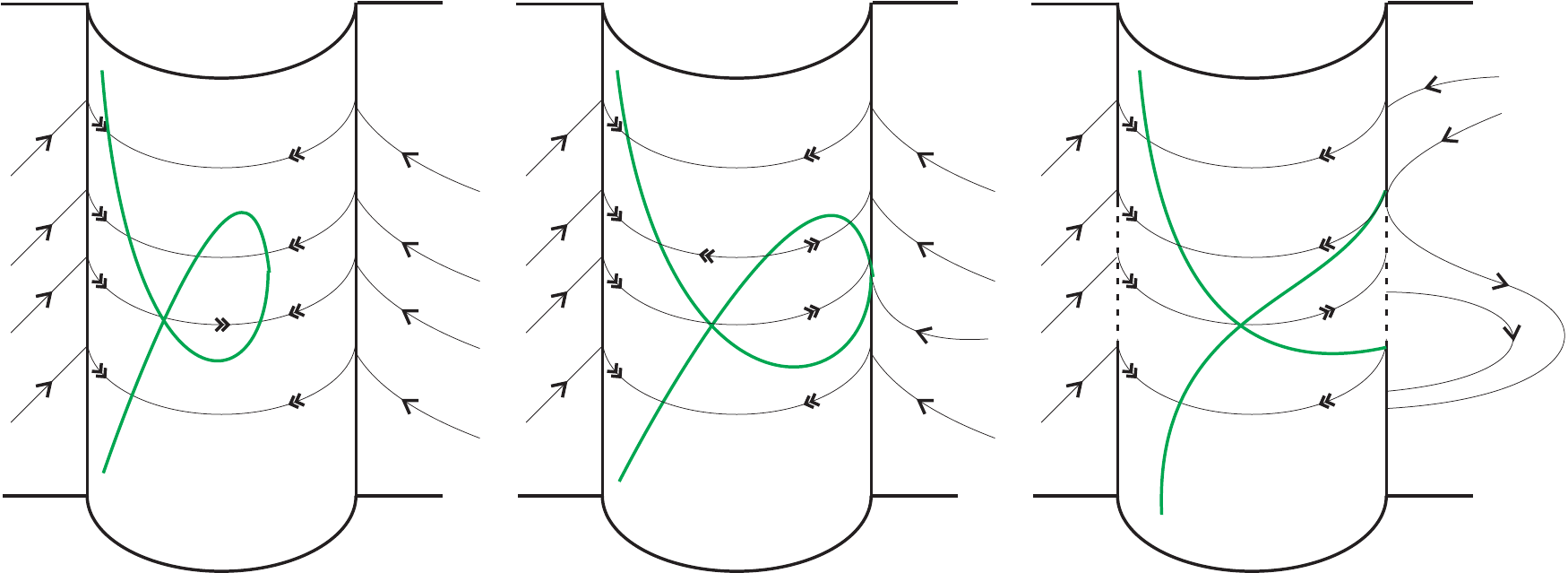}}\\
  \caption{\footnotesize{Regularized piecewise smooth vector field \eqref{eq-exe-piecewise-smooth-pwsvf-cusp-case-1} using the transition function \eqref{exe-non-monotonic-trans-func-cusp-case-1-preserva-transcritica} for $\lambda < 0$ (left), $\lambda = 0$ (center) and $\lambda > 0$ (right). The green curve is the critical set.}}
  \label{fig-bif1-cusp-sliding2}
\end{figure}
\end{example}

In general, we have the following Proposition.

\begin{proposition}\label{prop:sing_trans}
Let $Z_\lambda$, $\lambda\in(-1,1)$ be the piecewise smooth vector field \eqref{eq-exe-piecewise-smooth-pwsvf-cusp-case-1} and let $\varphi_\lambda$ be a non-monotonic transition function such that $\varphi_\lambda(0)=\frac{1+\lambda}{1-\lambda},$ $\varphi'_\lambda(0)=0,$ and $\varphi''_\lambda(0)<0$. Then, the regularized system associated with $Z_\lambda$ has a transcritical singularity at origin for all $\lambda$.
\end{proposition}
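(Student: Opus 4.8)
The plan is to reduce the statement entirely to item (c) of Theorem \ref{teob}: I will check that, for every $\lambda\in(-1,1)$, the piecewise smooth vector field \eqref{eq-exe-piecewise-smooth-pwsvf-cusp-case-1} together with the transition function $\varphi_\lambda$ satisfies the conditions \eqref{eq-teob-sf-transcritical} characterizing an SF-transcritical singularity, after which the conclusion is immediate. Since the hypotheses on $\varphi_\lambda$ are stated uniformly in $\lambda$, so will be the verification, and this is exactly what makes the transcritical point persist for the whole parameter interval (in contrast with Proposition \ref{prop:sing_trans_bif}).

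Concretely, I would first record the components $X=(f_1,f_2)$ with $f_1(x,y)=-y^2+\lambda$, $f_2\equiv 1$, and $Y=(g_1,g_2)$ with $g_1\equiv 1$, $g_2\equiv 1$. Evaluating at the origin gives $f_1(0,0)-g_1(0,0)=\lambda-1\neq 0$ for all $\lambda\in(-1,1)$, which is the first requirement of \eqref{eq-teob-sf-transcritical}; the hypotheses $\varphi_\lambda'(0)=0$ and $\varphi_\lambda''(0)<0$ supply $\varphi_\lambda'(0)=0$ and $\varphi_\lambda''(0)\neq 0$; and the hypothesis $\varphi_\lambda(0)=\tfrac{1+\lambda}{1-\lambda}$ is precisely the identity $\varphi_\lambda(0)=\tfrac{g_1(0,0)+f_1(0,0)}{g_1(0,0)-f_1(0,0)}$ appearing in the second line of \eqref{eq-teob-sf-transcritical}.

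Next I would compute the sign of the determinant in the third line of \eqref{eq-teob-sf-transcritical}. Since $f_{1,yy}(0,0)=-2$ and $g_{1,yy}\equiv 0$, the lower-right entry is $\tfrac14\bigl(1+\varphi_\lambda(0)\bigr)f_{1,yy}(0,0)$; using $1+\varphi_\lambda(0)=\tfrac{2}{1-\lambda}>0$ and $f_{1,yy}(0,0)<0$, this entry is negative. The upper-left entry is $\tfrac14\bigl(f_1(0,0)-g_1(0,0)\bigr)\varphi_\lambda''(0)=\tfrac14(\lambda-1)\varphi_\lambda''(0)$, which is positive because both $\lambda-1$ and $\varphi_\lambda''(0)$ are negative. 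Hence the $2\times2$ diagonal matrix has one positive and one negative diagonal entry, so its determinant is negative — the remaining condition in \eqref{eq-teob-sf-transcritical}. Invoking Theorem \ref{teob} then yields that the origin is an SF-transcritical singularity of the linearly regularized system associated with $Z_\lambda$ for every $\lambda\in(-1,1)$.

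The only point that deserves a little care — and the place where the prescribed value $\varphi_\lambda(0)=\tfrac{1+\lambda}{1-\lambda}$ is genuinely doing work — is to be sure that the underlying non-degeneracy conditions \eqref{eq-sing-transcritical-non-degeneracy-conditions} hold for all $\lambda$, not merely at $\lambda=0$. Writing the function that defines $C_{0}$ as $\Phi(x,y)=\tfrac12\bigl(f_1(0,y)+g_1(0,y)\bigr)+\tfrac12\varphi_\lambda(x)\bigl(f_1(0,y)-g_1(0,y)\bigr)$, the prescribed $\varphi_\lambda(0)$ forces $\Phi(0,0)=0$, the condition $\varphi_\lambda'(0)=0$ forces $\Phi_x(0,0)=0$, and the fact that $f_{1,y}(0,0)=g_{1,y}(0,0)=0$ (the first component of $Y$ is constant and that of $X$ depends on $y$ only through $-y^{2}$) forces $\Phi_y(0,0)=0$; finally the slow equation of \eqref{eq-slow-fast-pwsvf-planar} reduces to $\dot y=1\neq 0$ here, so $g(0,0,0)\neq 0$. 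I expect this bookkeeping, rather than any real difficulty, to be the ``obstacle'': once it is in place, the sign computation above — the only step actually using the hypotheses on $\lambda$ and on $\varphi_\lambda''(0)$ — closes the argument.
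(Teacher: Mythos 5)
Your proposal is correct and follows essentially the same route as the paper: verify the conditions \eqref{eq-teob-sf-transcritical} of Theorem \ref{teob} for the data $f_1=-y^2+\lambda$, $g_1=1$ and conclude that the determinant condition holds because $\tfrac14(\lambda-1)\varphi_\lambda''(0)>0$ while the other diagonal entry is negative (the paper simply computes the product to be $\varphi_\lambda''(0)/4<0$, which is the same thing). Your additional check that the underlying non-degeneracy conditions \eqref{eq-sing-transcritical-non-degeneracy-conditions} hold, in particular $\Phi_y(0,0)=0$ via $f_{1,y}(0,0)=g_{1,y}(0,0)=0$, is a welcome extra precaution that the paper's proof leaves implicit.
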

\begin{proof}
Consider the piecewise smooth vector field \eqref{eq-exe-piecewise-smooth-pwsvf-cusp-case-1} with $\lambda\in(-1,1)$. Then $f_1(x,y)=-y^2+\lambda$, $f_2(x,y)=1$, $g_1(x,y)=1,$ and $g_2(x,y)=1.$ Thus, we get that
\begin{itemize}
    \item $(f_1-g_1)(0,0)=\lambda-1\neq 0;$
    \item $\varphi_\lambda(0)=\frac{1+\lambda}{1-\lambda}.$
    \item $\scriptsize\left|
  \begin{array}{cc}
\frac{1}{4}\Big{(}(f_{1} - g_{1})\varphi''_\lambda(0)\Big{)} & 0 \\
    0 & \frac{1}{4}\Big{(}(1 + \varphi_\lambda(0))f_{1,yy} + (1 - \varphi_\lambda(0))g_{1,yy}\Big{)} \\
  \end{array}
\right|=\frac{\varphi''_\lambda(0)}{4}.$
\end{itemize}
Since $\varphi_\lambda'(0)=0$ and $\varphi_\lambda''(0)<0$, then Theorem \ref{teob} implies that the origin is a transcritical singularity for all $\lambda\in(-1,1).$
\end{proof}

The previous examples were concerned in a PS-cusp singularity such that $\Sigma^{s} = \Sigma\backslash\{0\}$. In what follows we study the case where $\Sigma^{w} = \Sigma\backslash\{0\}$.

\begin{example}
Consider the normal form of a PS-cusp singularity
\begin{equation}\label{eq-exe-piecewise-smooth-pwsvf-cusp-case-2}
Z(x,y) = \left\{
             \begin{array}{ccccc}
               X(x,y) & = & \Big{(}y^{2} + \lambda, 1\Big{)}, & \text{if} & x > 0; \\
               Y(x,y) & = & \Big{(}1, 1\Big{)}, & \text{if} & x < 0.
             \end{array}
           \right.
\end{equation}

For $\lambda = 0$, the origin is a PS-cusp singularity and $\Sigma^{w} = \Sigma\backslash\{0\}$. For $\lambda > 0$, $\Sigma^{w} = \Sigma$. Finally, for $\lambda <0$ the points $(0,\pm\sqrt{-\lambda})$ are PS-folds of $Z$, $\Sigma^{s} = \{-\sqrt{-\lambda} < y < \sqrt{-\lambda}\}$ and $\Sigma^{w} = \Sigma\backslash\Sigma^{s}$. See Figure \ref{fig-bif1-cusp-sewing1}.

Combining the ideas of Appendix \ref{sec-trans-func} and the conditions given by Theorem \ref{teob}, we construct a transition function $\varphi$ given by
\begin{equation}\label{exe-non-monotonic-trans-func-cusp-case-2}
\varphi(t) = \left\{
  \begin{array}{rcl}
    -1, & \text{if} &  t\leq -1; \\
      3 t^{6}-\frac{3 t^{5}}{2}-5 t^{4}+\frac{5 t^{3}}{2}+t^{2}+1, & \text{if} & -1 \leq t \leq 1; \\
    1, & \text{if} & t\geq 1;
  \end{array}
\right.
\end{equation}
whose derivative is zero for $t_{0} = 0$ and $t_{1,2} = \frac{1}{24} (5\pm\sqrt{89})$. See Figure \ref{fig-cusp-nao-preserva-transcritica-caso-2-trans-func}.

\begin{figure}[h]
  \center{\includegraphics[width=0.48\textwidth]{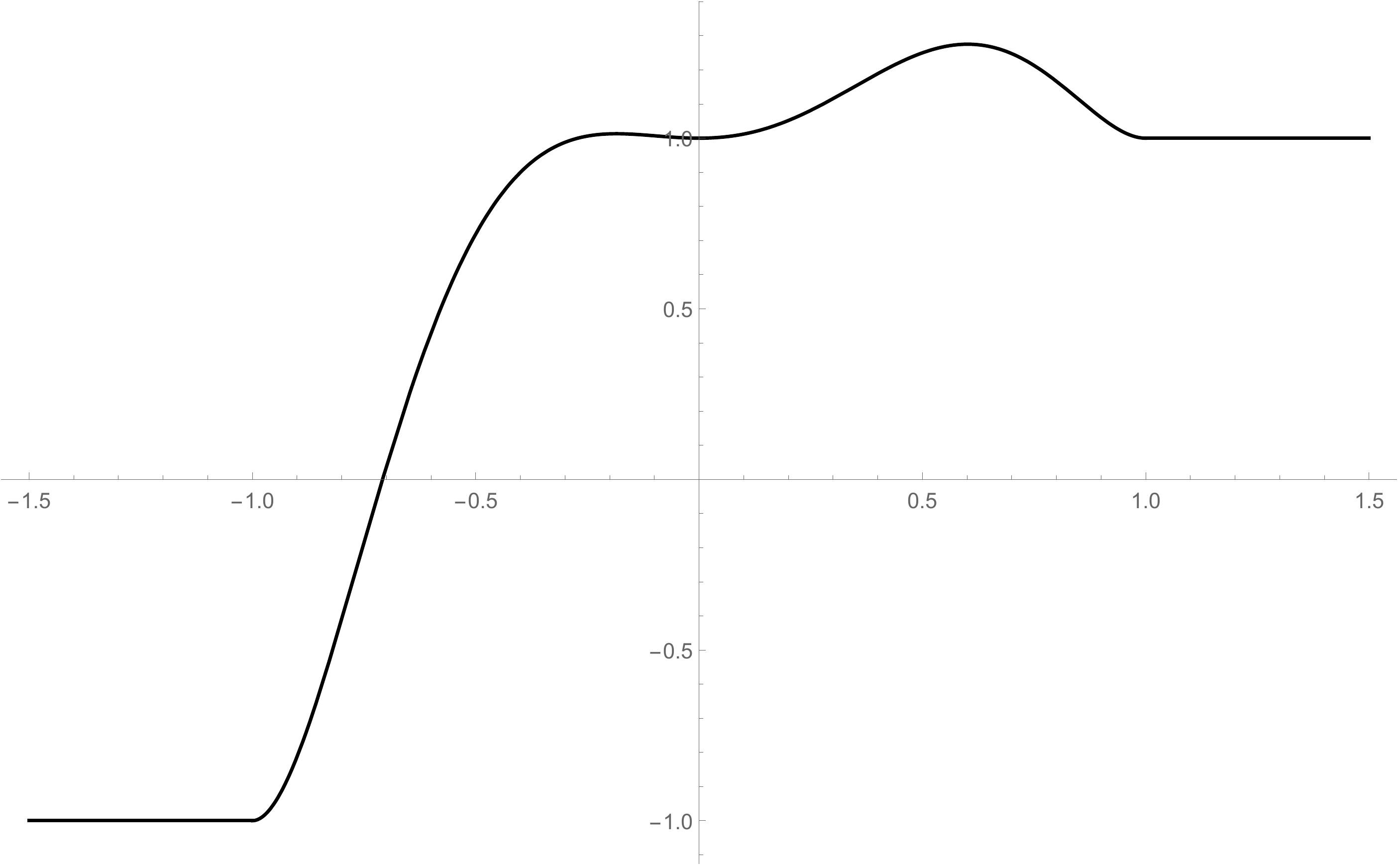}}\\
  \caption{\footnotesize{Non monotonic transition function \eqref{exe-non-monotonic-trans-func-cusp-case-2}.}}
  \label{fig-cusp-nao-preserva-transcritica-caso-2-trans-func}
\end{figure}

After regularization and blow-up, one obtains the slow-fast system
\begin{equation}\label{eq-exe-piecewise-smooth-cusp-case-2-slow-fast}
\left\{
             \begin{array}{rcl}
               \varepsilon\dot{x} & = & \frac{1}{4} \Bigg{(}\lambda (x+1)^{2} \Big{(}x (x (3 x (2 x-5)+14)-8)+4\Big{)}\\
               & &+(x-1)^{2} \Big{(}6 x^{2}+9 x+2\Big{)} x^{2} \Big{(}y^{2}-1\Big{)}+4 y^2\Bigg{)}; \\
               \dot{y} & = & 1.
             \end{array}
           \right.
\end{equation}

Observe that for $x = 0$ and $x = \frac{1}{24} (5\pm\sqrt{89})$, the critical manifold presents non normally hyperbolic points. In particular, the origin is a transcritical singularity that is destroyed for $\lambda\neq 0$. Moreover, in this example, the $\varphi$-sliding region is not empty. See Figure \ref{fig-bif1-cusp-sewing1}.

\begin{figure}[h]
  \center{\includegraphics[width=1\textwidth]{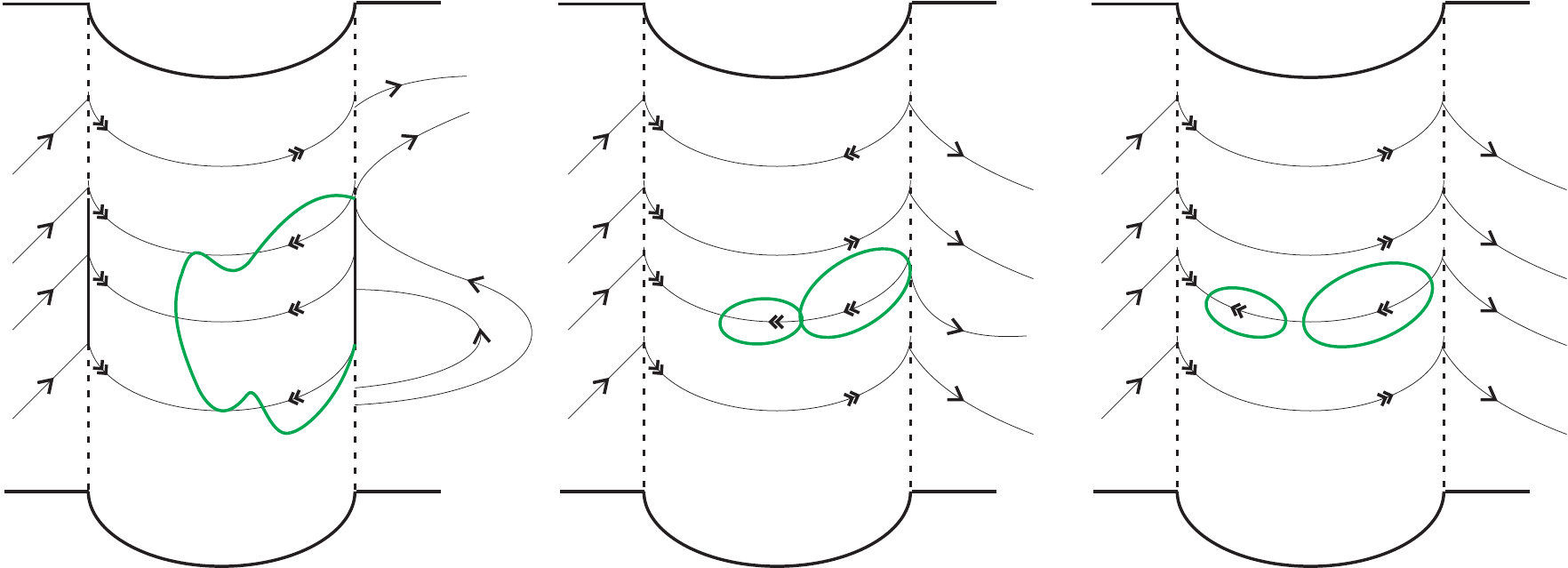}}\\
  \caption{\footnotesize{Regularization of the piecewise smooth vector field \eqref{eq-exe-piecewise-smooth-pwsvf-cusp-case-2} using the transition function \eqref{exe-non-monotonic-trans-func-cusp-case-2} for $\lambda < 0$ (left), $\lambda = 0$ (center) and $\lambda > 0$ (right). The green curve is the critical set.}}
  \label{fig-bif1-cusp-sewing1}
\end{figure}
\end{example}

In the last example, we have destroyed the SF-transcritical singularity by perturbing the parameter $\lambda$. Nevertheless, such singularity persists when the transition function is perturbed.

\begin{example}
Consider once again the piecewise smooth vector field \eqref{eq-exe-piecewise-smooth-pwsvf-cusp-case-2}, whose bifurcation diagram is given in the Figure \ref{fig-bif1-cusp-sewing1}. In this example we adopt the transition function
\begin{equation}\label{exe-non-monotonic-trans-func-cusp-case-2-preserva-transcritica}
\varphi_\lambda(t) = \left\{
  \begin{array}{rcl}
    -1, & \text{if} &  t\leq -1; \\
      -\frac{(\lambda+3) t^{6}}{\lambda-1}-\frac{3 t^{5}}{2}+\frac{(\lambda+5) t^{4}}{\lambda-1}+\frac{5 t^{3}}{2}+t^{2}+\frac{\lambda+1}{1-\lambda}, & \text{if} & -1 \leq t \leq 1; \\
    1, & \text{if} & t\geq 1;
  \end{array}
\right.
\end{equation}
which is a perturbation of the transition function \eqref{exe-non-monotonic-trans-func-cusp-case-2} and its bifurcation diagram for small $\lambda$ can be seen in Figure \ref{fig-cusp-preserva-transcritica-caso-2-trans-func}.

For $t = 0$ and $t = \frac{15 - 15\lambda\pm \sqrt{3} \sqrt{11 \lambda^{2}-278 \lambda+267}}{24 (\lambda+3)}$, the derivative of $\varphi_\lambda$ is zero. 

\begin{figure}[h]
  \center{\includegraphics[width=0.315\textwidth]{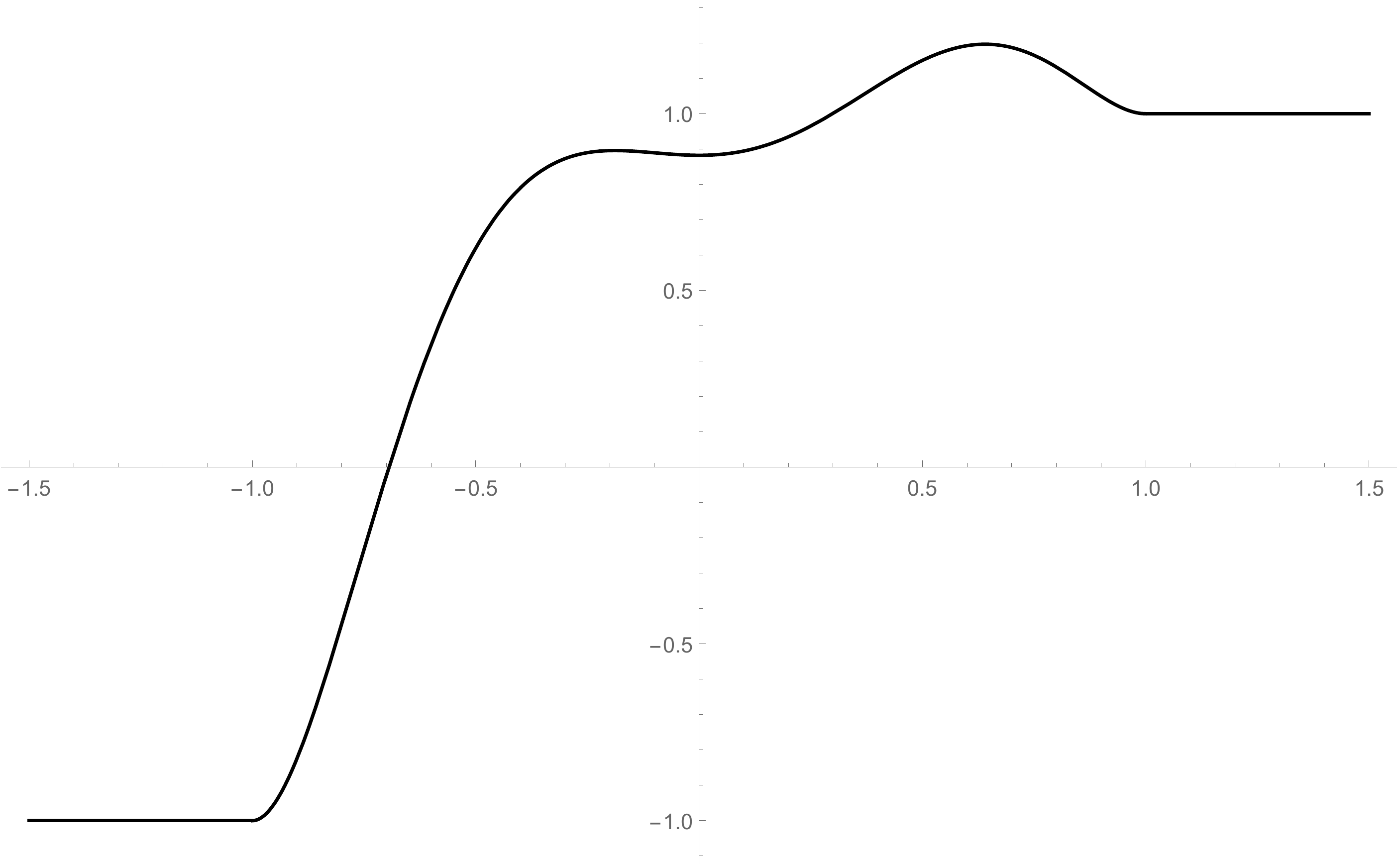}\hspace{0.3cm}\includegraphics[width=0.315\textwidth]{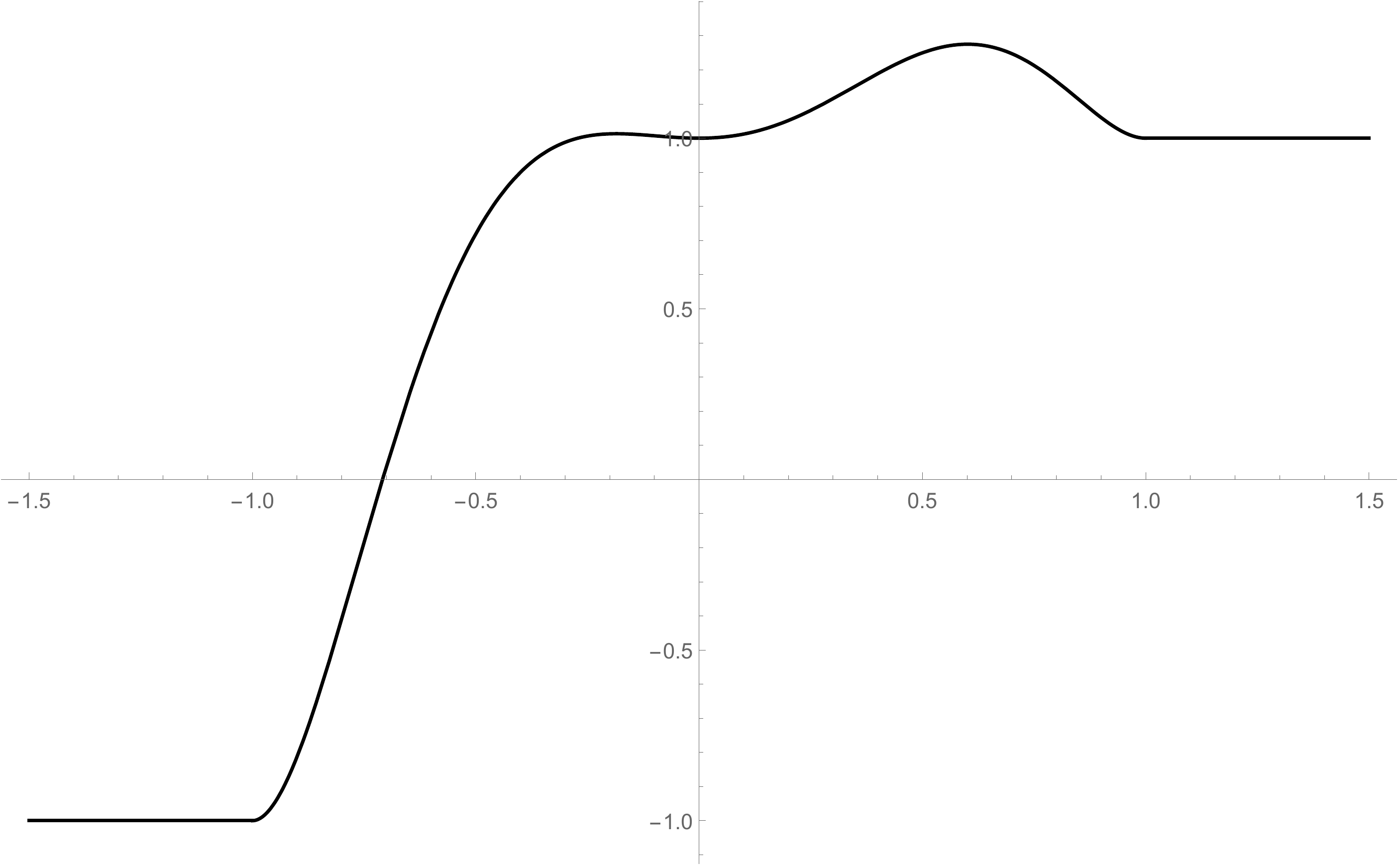}\hspace{0.3cm}\includegraphics[width=0.315\textwidth]{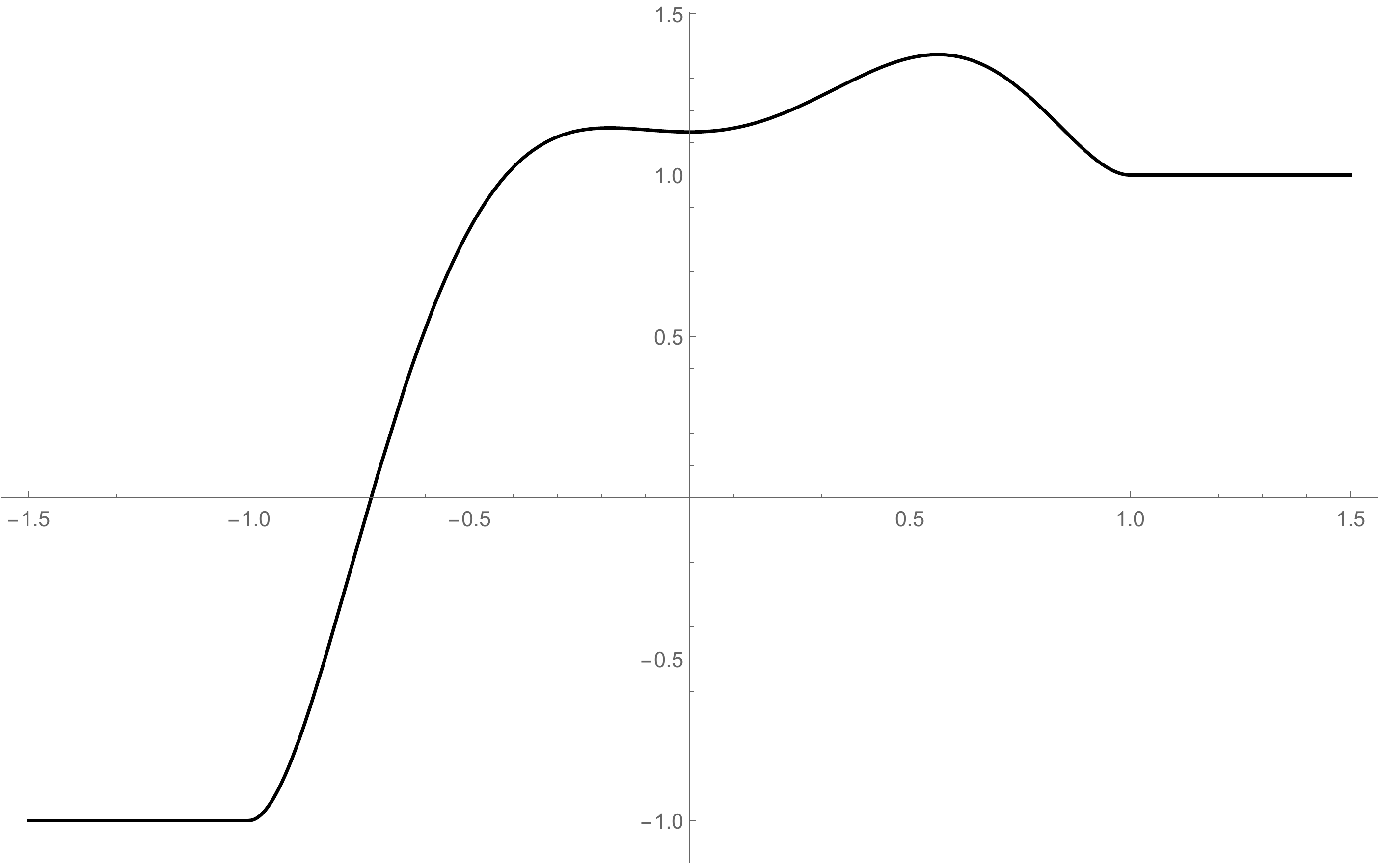}}\\
  \caption{\footnotesize{Graphics of the transition function \eqref{exe-non-monotonic-trans-func-cusp-case-2-preserva-transcritica} for $\lambda < 0$ (left), $\lambda = 0$ (center) and $\lambda > 0$ (right).}}
  \label{fig-cusp-preserva-transcritica-caso-2-trans-func}
\end{figure}

After regularization and blow-up, we obtain the slow-fast system
\begin{equation}\label{exe-non-monotonic-SF-cusp-case-2-preserva-transcritica}
\left\{
             \begin{array}{rcl}
               \varepsilon\dot{x} & = & -\frac{x^{2} \Bigg{(}\lambda (x+1)^{2} (x (2 x-1)-2)+\Big{(}6 x^{2}+9 x+2\Big{)} (x-1)^{2}\Bigg{)} \Big{(}\lambda+y^{2}-1\Big{)}+4 y^{2}}{4 (\lambda-1)}; \\
               \dot{y} & = & 1;
             \end{array}
           \right.
\end{equation}
whose critical manifold is non normally hyperbolic at the points such that $x = 0$ and $x = \frac{15 - 15\lambda\pm \sqrt{3} \sqrt{11 \lambda^{2}-278 \lambda+267}}{24 (\lambda+3)}$. In particular, the origin will be a SF-transcritical singularity. Once again, since there are values in the open interval $(-1,1)$ such that $\varphi_\lambda(t) > 1$, by Theorem \ref{mtheorem-A} it follows that $\Sigma^{s}\varsubsetneq \Sigma^{s}_{r}$. See Figure \ref{fig-bif1-cusp-sewing2}.
\begin{figure}[h]
  \center{\includegraphics[width=1\textwidth]{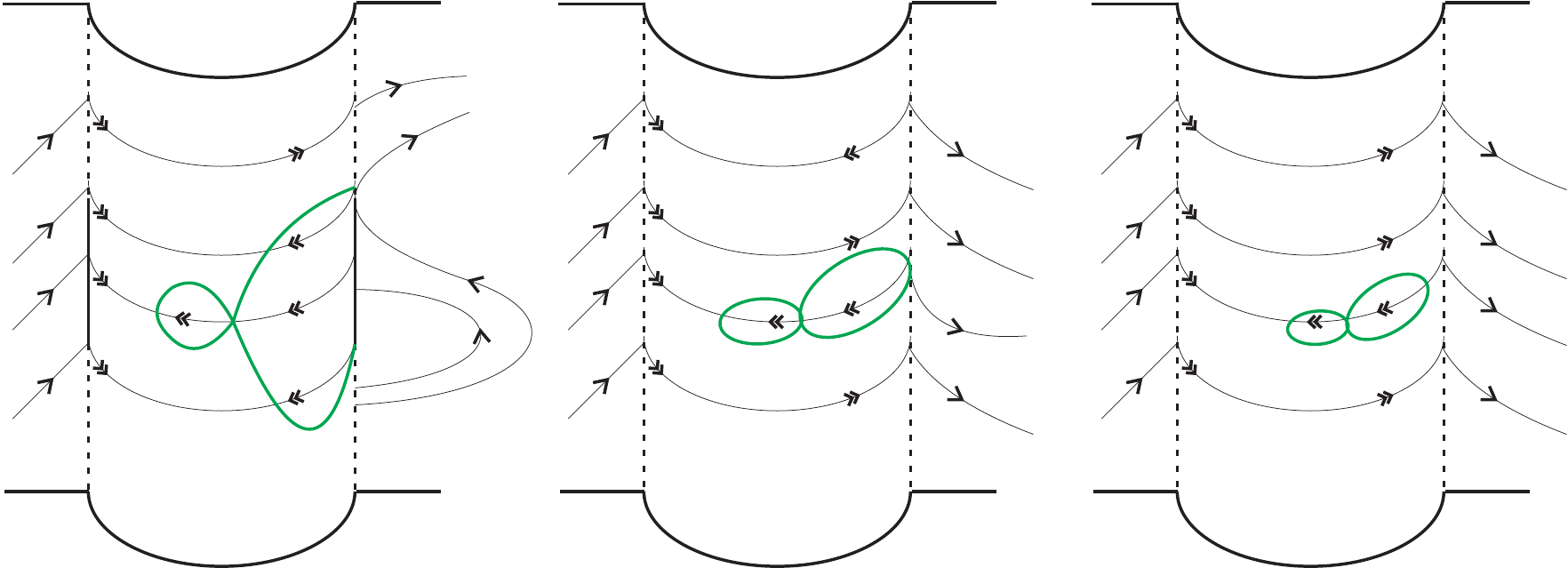}}\\
  \caption{\footnotesize{Regularization of the piecewise smooth vector field \eqref{eq-exe-piecewise-smooth-pwsvf-cusp-case-2} using the transition function \eqref{exe-non-monotonic-trans-func-cusp-case-2-preserva-transcritica} for $\lambda < 0$ (left), $\lambda = 0$ (center) and $\lambda > 0$ (right). The green curve is the critical set.}}
  \label{fig-bif1-cusp-sewing2}
\end{figure}

\end{example}

Notice that Propositions \ref{prop:sing_trans_bif} and \ref{prop:sing_trans} hold for the normal form \ref{eq-exe-piecewise-smooth-pwsvf-cusp-case-2}.


\subsection{Codimension 1 bifurcation: Visible-invisible PS-fold-fold}\label{subsec-normal-form-fold}
\noindent

This subsection is devoted to analyze a PS-fold-fold. It is important to mention that, from Theorem \ref{teob}, it is not possible to generate a SF-transcritical singularity from a PS-fold-fold.

In each PS-normal form, we see that, for $\lambda = 0$, the $\varphi$-regularization satisfies the hypotheses of Theorem \ref{mtheorem-A}, item (d). Therefore, it is not possible to extend the sliding dynamics to the origin using geometric singular perturbation theory when $\lambda = 0$.

The analysis for visible-visible, visible-invisible and invisible-invisible PS-fold-folds are completely analogous.

\begin{example}
Consider the normal form of a visible-invisible PS-fold-fold singularity
\begin{equation}\label{eq-exe-piecewise-smooth-pwsvf-VI-FOLD-case-1}
Z(x,y) = \left\{
             \begin{array}{ccccc}
               X(x,y) & = & \Big{(}2y + \lambda, 1\Big{)}, & \text{if} & x > 0; \\
               Y(x,y) & = & \Big{(}7y, 1\Big{)}, & \text{if} & x < 0.
             \end{array}
           \right.
\end{equation}

For $\lambda = 0$, the origin is a visible-invisible PS-fold-fold singularity and $\Sigma^{w} = \Sigma\backslash\{0\}$. For $\lambda < 0$, the set $\{(x,y)\in\Sigma \ ; \ 0 < y < -\frac{\lambda}{2}\}$ is a Filippov sliding region and for $\lambda > 0$, the set $\{(x,y)\in\Sigma \ ; \ -\frac{\lambda}{2} < y  < 0\}$ is a Filippov escaping region. See figure \ref{fig-bif1-fold-sewing}.

Combining the ideas of Appendix \ref{sec-trans-func} and the conditions given by Theorem \ref{teob}, we construct a transition function $\varphi$ given by
\begin{equation}\label{exe-non-monotonic-trans-func-VI-FOLD-case-1}
\varphi(t) = \left\{
  \begin{array}{rcl}
    -1, & \text{if} &  t\leq -1; \\
     -\frac{3 t^{5}}{2} -t^{4} +\frac{5 t^{3}}{2} +2t^{2} - 1, & \text{if} & -1 \leq t \leq 1; \\
    1, & \text{if} & t\geq 1;
  \end{array}
\right.
\end{equation}
whose derivative is zero for $t = 0$ and $t = -\frac{8}{15}$. See Figure \ref{fig-VI-FOLD-case-1-trans-func}.

\begin{figure}[h]
  \center{\includegraphics[width=0.5\textwidth]{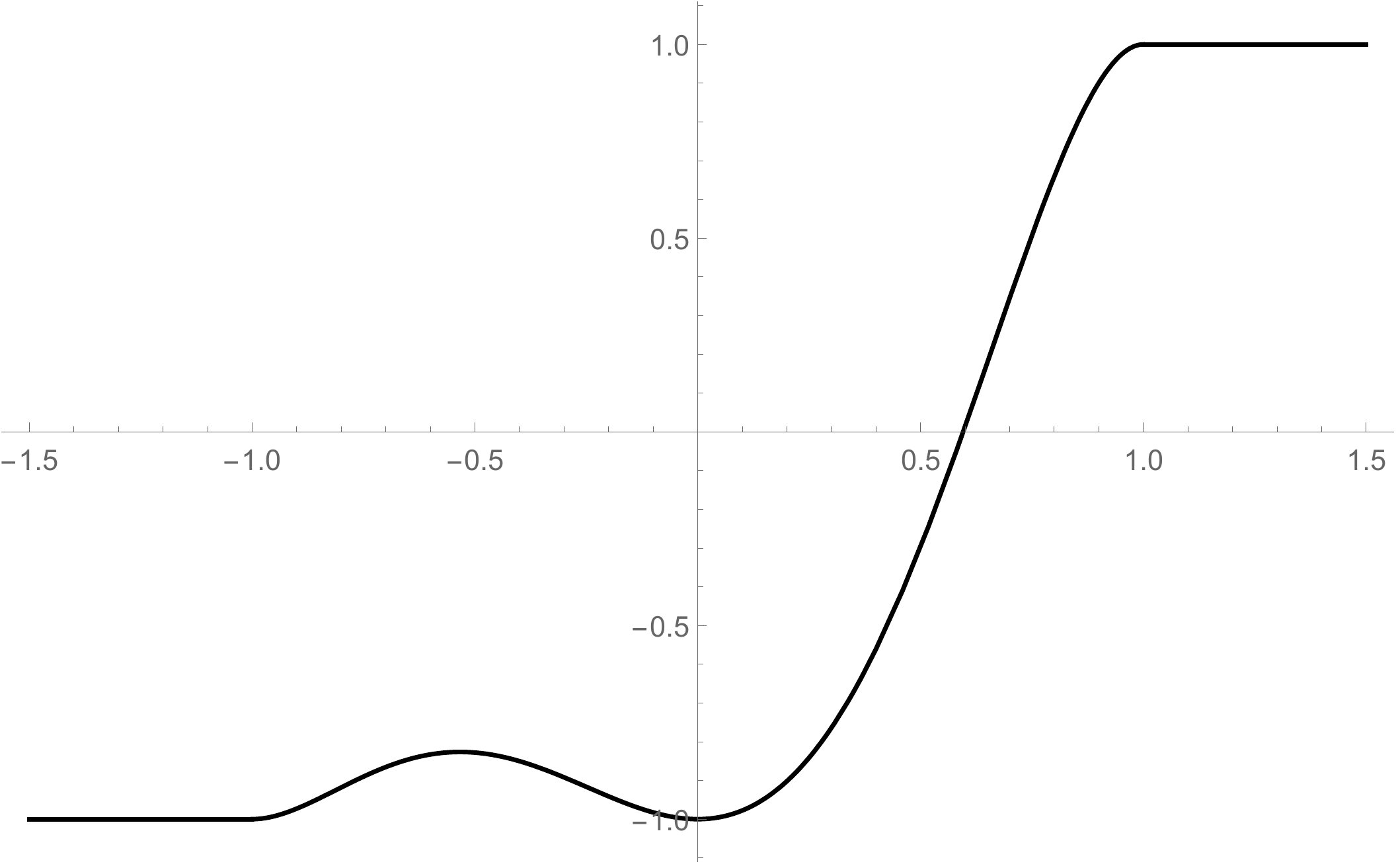}}\\
  \caption{\footnotesize{Transition function \eqref{exe-non-monotonic-trans-func-VI-FOLD-case-1}.}}
  \label{fig-VI-FOLD-case-1-trans-func}
\end{figure}

After regularization and blow-up, we obtain the slow-fast system
\begin{equation}\label{exe-non-monotonic-SF-VI-FOLD-case-1}
               \varepsilon\dot{x} = \frac{1}{4} \Big{(}28y - x^{2}(x + 1)^{2}(3x - 4)(\lambda-5 y)\Big{)}; \ \ \
               \dot{y} = 1;
\end{equation}
whose critical manifold is non normally hyperbolic at the points $x = 0$ and $x = -\frac{8}{15}$. See Figure \ref{fig-bif1-fold-sewing}

\begin{figure}[h]
  \center{\includegraphics[width=1\textwidth]{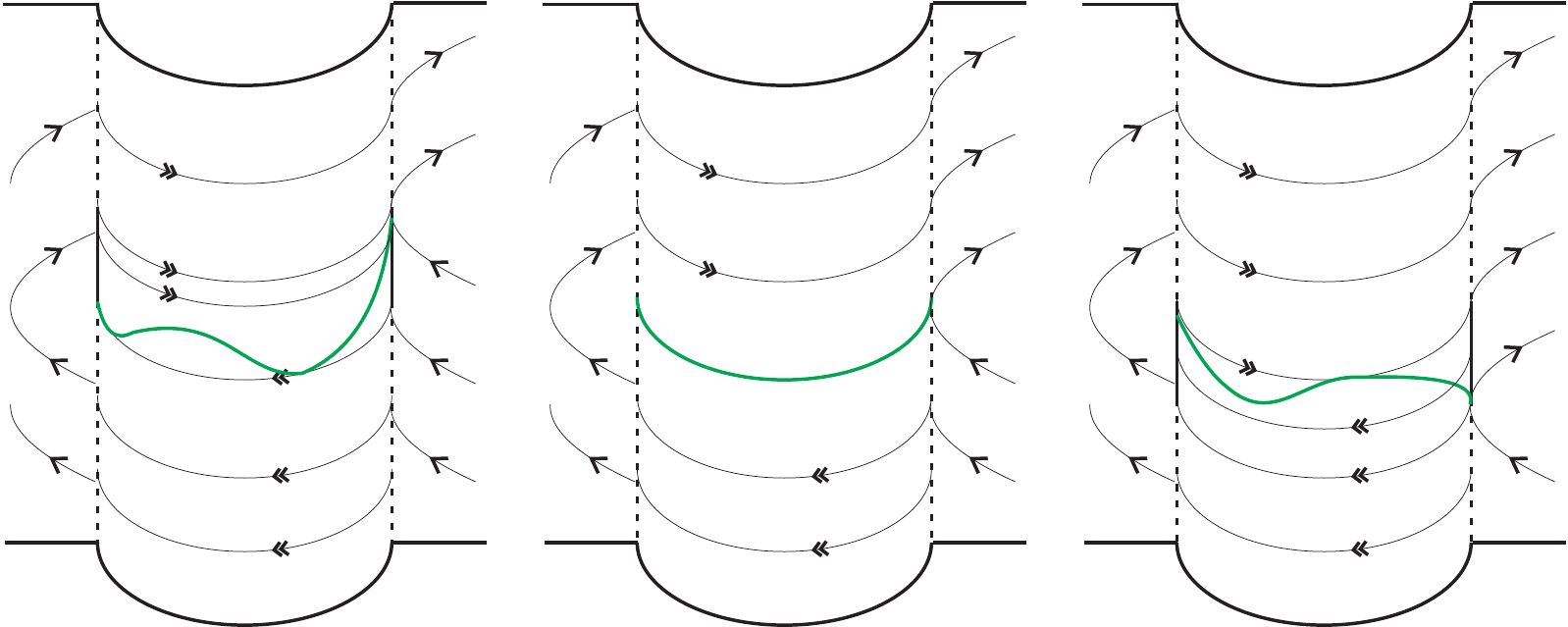}}\\
  \caption{\footnotesize{Bifurcation diagram of the regularization of the piecewise smooth vector field \eqref{eq-exe-piecewise-smooth-pwsvf-VI-FOLD-case-1} using the transition function \eqref{exe-non-monotonic-trans-func-VI-FOLD-case-1}. The green curve is the critical set.}}
  \label{fig-bif1-fold-sewing}
\end{figure}

\end{example}


\begin{example}
Consider the normal form of a visible-invisible PS-fold-fold singularity
\begin{equation}\label{eq-exe-piecewise-smooth-pwsvf-VI-FOLD-case-2}
Z(x,y) = \left\{
             \begin{array}{ccccc}
               X(x,y) & = & \Big{(}-2y - \lambda, - 1\Big{)}, & \text{if} & x > 0; \\
               Y(x,y) & = & \Big{(}7y, 1\Big{)}, & \text{if} & x < 0.
             \end{array}
           \right.
\end{equation}

For $\lambda = 0$, the origin is a visible-invisible PS-fold-fold singularity. Observe that $\Sigma^{s} = \{y > 0\}$ and $\Sigma^{e} = \{y < 0\}$. For $\lambda < 0$, we obtain $\Sigma^{w} = \{0 < y < -\frac{\lambda}{2} \}$, and for  $\lambda > 0$, we obtain $\Sigma^{w} = \{-\frac{\lambda}{2} < y < 0 \}$. See figure \ref{fig-bif1-fold-sliding}.

We adopt the transition function
\eqref{exe-non-monotonic-trans-func-VI-FOLD-case-1}. See Figure \ref{fig-VI-FOLD-case-1-trans-func}. After regularization and blow-up, we obtain the slow-fast system
\begin{equation}\label{exe-non-monotonic-SF-VI-FOLD-case-2}
               \varepsilon\dot{x} = \frac{1}{4} \Big{(}(x^{2}(3x - 4)(x+1)^{2}(9y + \lambda) + 28y\Big{)}; \ \ \ 
               \dot{y} = \varphi(x);
\end{equation}
whose critical manifold is non normally hyperbolic at the points $x = 0$ and $x = -\frac{8}{15}$. See Figure \ref{fig-bif1-fold-sliding}.

\begin{figure}[h]
  \center{\includegraphics[width=1\textwidth]{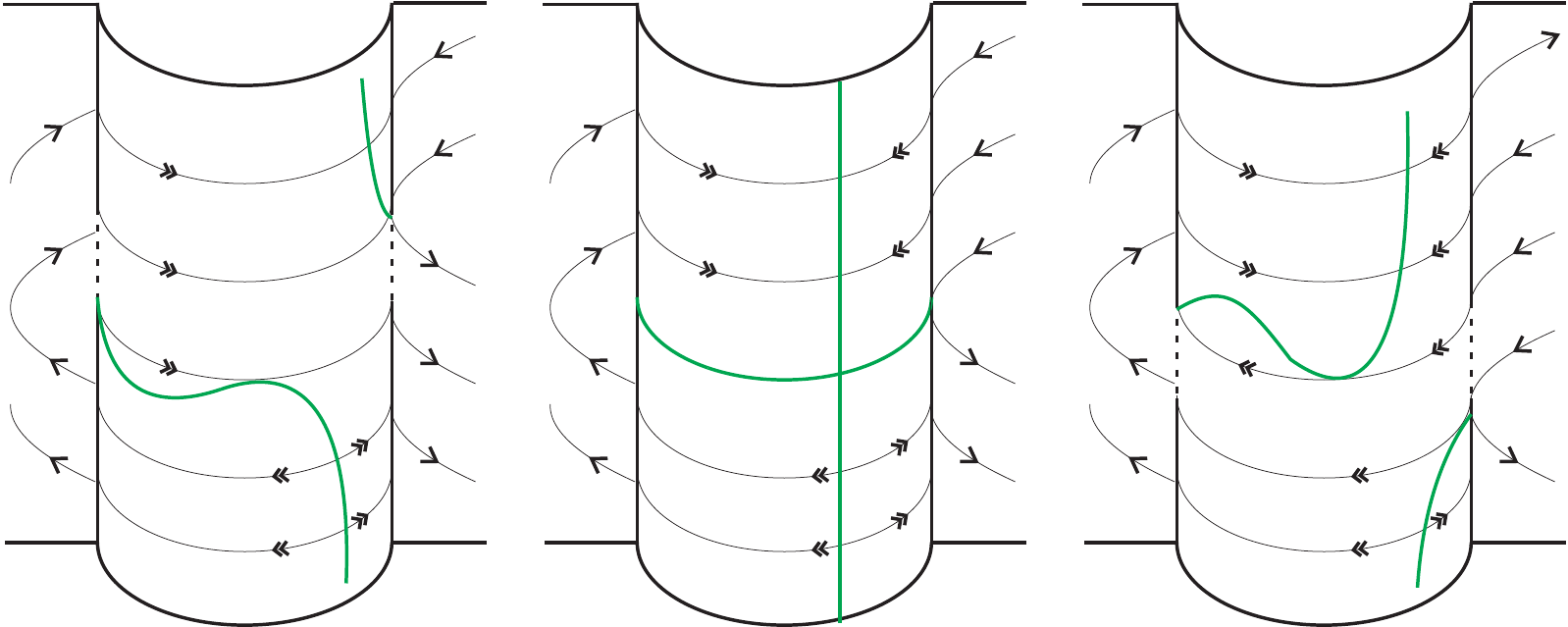}}\\
  \caption{\footnotesize{Bifurcation diagram of the piecewise smooth vector field \eqref{eq-exe-piecewise-smooth-pwsvf-VI-FOLD-case-2} using the transition function \eqref{exe-non-monotonic-trans-func-VI-FOLD-case-1}. The green curve is the critical set.}}
  \label{fig-bif1-fold-sliding}
\end{figure}

\end{example}


\subsection{Codimension 1 bifurcation: Invisible-invisible PS-fold-fold}\label{subsec-normal-form-II-fold}
\noindent

From Theorem \ref{teob}, it is not possible to generate a PS-pitchfork singularity using linear regularizations. However, Theorem \ref{teoc} assures that it is possible to generate such a SF singularity using nonlinear regularizations.

\begin{example}\label{exe-non-linear-reg}
 Let $Z = (X,Y)$ be a PSVF defined on $\mathbb{R}^2$ with $h(x,y)=x,$ $X(x,y) = ((x + 1)y + 1, -1)$, $Y(x,y) = ((x - 1)y - 1, -1)$. Consider the continuous combination of $X$ and $Y$ given by 
$$\widetilde{Z}(\lambda, x ,y)=\Big{(}(x + \lambda)y + \lambda^{3}, -1\Big{)}.$$ 

A $\varphi$-nonlinear regularization of $X$ and $Y$ is the $1$-parameter family given by $Z_{\varepsilon}(x,y) = \widetilde{Z}(\varphi(\frac{x}{\varepsilon}), x,y)$. Assume that the monotonic transition function $\varphi$ satisfies $\varphi(0) = 0$ and $\varphi'(0) \neq 0$ (for example,  $\varphi(t) = -\frac{t^{5}}{2} + \frac{t^{3}}{2} + t$, for all $t\in(-1,1)$). Thus, after nonlinear regularization and directional blow-up we obtain
\begin{equation}\label{eq-conditions-nh-non-linear}
   \varepsilon\dot{\hat{x}} = (\varepsilon \hat{x} + \varphi(\hat{x}))y + \varphi(\hat{x})^{3} =: F(\hat{x}, y, \varepsilon); \ \ \ 
   \dot{y} = -1 =: G(\hat{x}, y, \varepsilon);
\end{equation}
where $\hat{x}=\frac{x}{\varepsilon}$. Notice that \eqref{eq-conditions-nh-non-linear} satisfies conditions \eqref{eq-teoc-sf-pitchfork} and therefore the origin is a SF-pitchfork singularity. See Figure \ref{fig-pitchfork}. 

\begin{figure}[h!]
  \center{\includegraphics[width=0.48\textwidth]{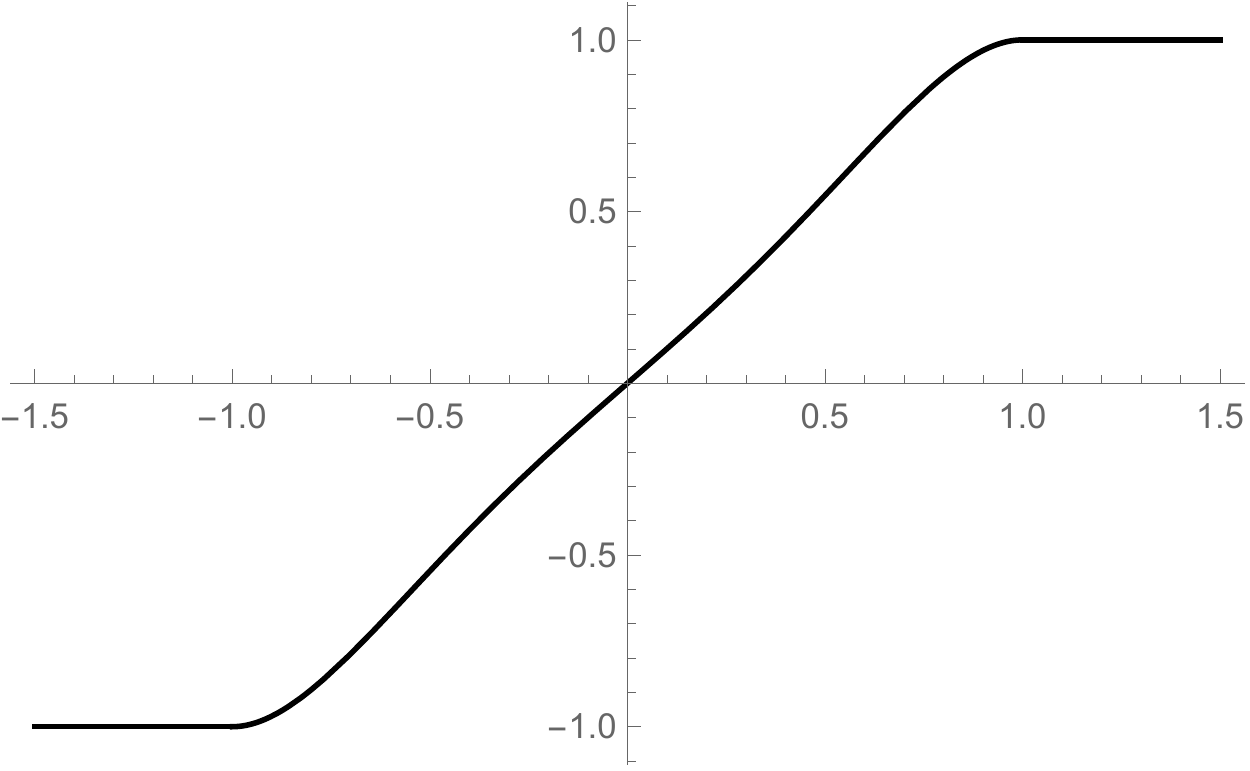}\hspace{0.7cm}\includegraphics[width=0.35\textwidth]{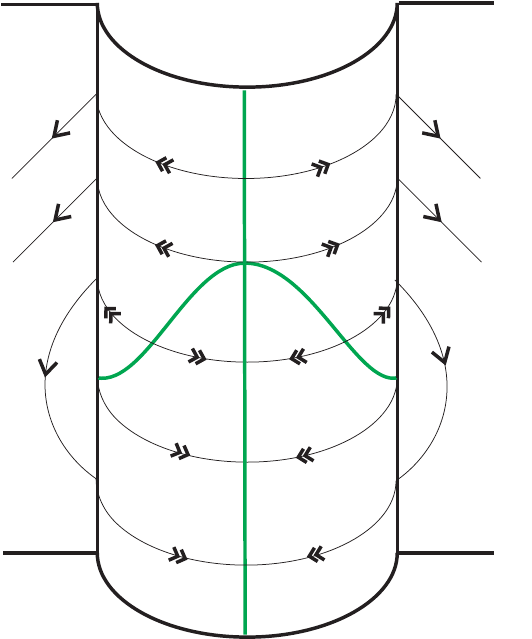}}\\
  \caption{\footnotesize{
  Graphic of the monotone transition function $\varphi$ (left) and the $\varphi$-nonlinear regularization \eqref{eq-conditions-nh-non-linear} of $f$ and $g$ (right). The critical manifold is highlighted in green.}}
  \label{fig-pitchfork}
\end{figure}

\end{example}


\section{Acknowledgements}
This article was possible thanks to the scholarship granted from the Brazilian Federal Agency for Support and Evaluation of Graduate Education (CAPES), in the scope of the Program CAPES-Print, process number 88887.310463/2018-00, International Cooperation Project number 88881.310741/2018-01.

Otavio H. Perez is supported by Sao Paulo Research Foundation (FAPESP) grant 2016/22310-0, and by Coordena\c{c}\~{a}o de Aperfei\c{c}oamento de Pessoal de N\'ivel Superior - Brasil (CAPES) - Finance Code 001. Gabriel Rondón is supported by Sao Paulo Research Foundation (FAPESP) grant 2020/06708-9. Paulo Ricardo da Silva is partially supported by São Paulo Research Foundation (FAPESP) grant 2019/10269-3. 
\appendix

\section{Constructing non-monotonic transition functions}\label{sec-trans-func}

One of our goals is to study SF-singularities related to more general regularizations, that is, regularizations given by non monotonic transition functions. In this section we discuss how to construct a suitable non monotonic transition function, and we apply these ideas in order to obtain regularizations that generate SF-singularities that do not appear in the monotonic case.

A transition function $\varphi:\mathbb{R}\rightarrow\mathbb{R}$ is sufficiently smooth and it must satisfy $\varphi(-1) = -1$ for $t \leq -1$ and $\varphi(1) = 1$ for $t \geq 1$. It remains to define $\varphi$ in the closed interval $[-1,1]$ in such a way that we obtain a sufficiently smooth function at the points $t = \pm 1$. For this purpose, one can assume that in such interval the function $\varphi$ is a \emph{polynomial} such that
\begin{equation}\label{trans-function-construc}
\varphi'(-1) = 0; \ \ \ \varphi'(1) = 0; \ \ \  \varphi(-1) = -1; \ \ \ \varphi(1) = 1. 
\end{equation}

It is clear that one can require further hypotheses, depending on the kind of critical manifold one wants to generate. Since there are (at least) four conditions that $\varphi$ must satisfy, therefore in the closed interval $[-1,1]$ the transition function is of the form
$$
\varphi(t) = a_{3}t^{3} + a_{2}t^{2} + a_{1}t + a_{0};
$$
and these four coefficients $a_{0}$, $a_{1}$, $a_{2}$, $a_{3}$ are determined solving the following system composed by four equations
$$
\left\{
  \begin{array}{rcccl}
    \varphi(-1) & = & -a_{3} + a_{2} - a_{1} + a_{0} & = & -1; \\
    \varphi(1) & = & a_{3} + a_{2} + a_{1} + a_{0} & = & 1; \\
    \varphi'(-1) & = & 3a_{3} - 2a_{2} + a_{1} & = & 0; \\
    \varphi'(1) & = & 3a_{3} + 2a_{2} + a_{1} & = & 0;
  \end{array}
\right. 
$$
which comes from conditions \eqref{trans-function-construc}. We remark that, in the closed interval $[-1,1]$, the polynomial $\varphi(t)$ may have degree greater than $3$, depending on the number of conditions that $\varphi(t)$ must satisfy in such interval.

More precisely, we have the following technical Lemma.

\begin{lemma}\label{lemma-trans-function-tech}
Suppose that the transition function must satisfy the following conditions in the interval $[-1,1]$:
\begin{equation*}
\begin{split}
\varphi'(-1) = 0; \ \ \varphi'(1) = 0; \ \ \varphi'(p_{1}) = u_{1}; \ \ \dots \ \ \varphi'(p_{k}) = u_{k}; \\
\varphi(-1) = -1; \ \ \varphi(1) = 1; \ \ \varphi(q_{1}) = v_{1}; \ \ \dots \ \ \varphi(q_{l}) = v_{l}.
\end{split}
\end{equation*}
where $p_{1},\dots,p_{k},q_{1},\dots,q_{l}\in(-1,1)$. Then the transition $\varphi$ can be considered as
$$
\varphi(t) = \left\{
  \begin{array}{rcl}
    -1, & \text{if} &  t\leq -1; \\
      a_{k+l+3}t^{k+l+3} + \dots + a_{1}t + a_{0}, & \text{if} & -1 \leq t \leq 1; \\
    1, & \text{if} & t\geq 1;
  \end{array}
\right.
$$
that is, in the interval $[-1,1]$ the transition function is given by a polynomial. Moreover, the coefficients $a_{0},\dots,a_{k+l+3}$ satisfy the following system of $k+l+4$ algebraic equations:
$$
\left\{
  \begin{array}{rcccl}
  \varphi'(-1) & = & (k+l+3)a_{k+l+3}(-1)^{k+l+2} + \dots - 2a_{2} + a_{1} & = & 0; \\
    \varphi(-1) & = & a_{k+l+3}(-1)^{k+l+3} + \dots - a_{1} + a_{0} & = & -1; \\
    \varphi'(1) & = & (k+l+3)a_{k+l+2} + \dots + 2a_{2} + a_{1} & = & 0; \\
    \varphi(1) & = & a_{k+l+3} + \dots + a_{1} + a_{0} & = & 1; \\
    \varphi'(p_{1}) & = & (k+l+3)a_{k+l+3}p_{1}^{k+l+2} + \dots + 2a_{2}p_{1} + a_{1} & = & u_{1}; \\
    \varphi(q_{1}) & = & a_{k+l+3}q_{1}^{k+l+3} + \dots + a_{1}q_{1} + a_{0} & = & v_{1}; \\
    \vdots & \vdots & \vdots & \vdots & \vdots \\
    \varphi'(p_{k}) & = & (k+l+3)a_{k+l+3}p_{k}^{k+l+2} + \dots + 2a_{2}p_{k} + a_{1} & = & u_{k}; \\
    \varphi(q_{l}) & = & a_{k+l+3}q_{l}^{k+l+3} + \dots + a_{1}q_{l} + a_{0} & = & v_{l}.
  \end{array}
\right. 
$$

\end{lemma}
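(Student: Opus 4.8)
The plan is to reduce the statement to a standard fact about polynomial interpolation with prescribed values and first derivatives (Hermite interpolation). First I would observe that the conditions imposed on $\varphi$ in the interval $[-1,1]$ are of two types: $k+2$ conditions on the derivative $\varphi'$ (at the points $-1$, $1$, $p_1,\dots,p_k$) and $l+2$ conditions on the value $\varphi$ itself (at the points $-1$, $1$, $q_1,\dots,q_l$), giving a total of $N := k+l+4$ linear conditions. A polynomial of degree $k+l+3$ has exactly $k+l+4 = N$ coefficients $a_0,\dots,a_{k+l+3}$, so the number of unknowns matches the number of equations, and each condition is linear in the coefficients. Writing the generic polynomial $\varphi(t) = \sum_{j=0}^{k+l+3} a_j t^j$ and its derivative $\varphi'(t) = \sum_{j=1}^{k+l+3} j\, a_j t^{j-1}$, substituting the prescribed nodes, and collecting terms yields precisely the displayed linear system; this is a direct computation and I would simply exhibit the substitution.

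The second step is to argue that this linear system is solvable, i.e. that such a polynomial exists. The cleanest route is to note that the map sending a polynomial $P$ of degree $\le k+l+3$ to the tuple
$$\bigl(P(-1),P(1),P(q_1),\dots,P(q_l),P'(-1),P'(1),P'(p_1),\dots,P'(p_k)\bigr)\in\mathbb{R}^{N}$$
is linear between spaces of the same dimension $N$, so it suffices to check injectivity. If $P$ lies in the kernel, then $P$ vanishes at the $l+2$ distinct points $-1,1,q_1,\dots,q_l$ and $P'$ vanishes at the (possibly overlapping) points $-1,1,p_1,\dots,p_k$; in the worst case where all value-nodes and all derivative-nodes coincide (i.e. a full Hermite problem at $k+l+2$ points would force degree too high), one gets too many root-with-multiplicity constraints for a nonzero polynomial of degree $k+l+3$ only if the nodes are arranged so that the total multiplicity exceeds the degree. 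I would handle this by the standard observation that one may always enlarge the degree: the statement only claims $\varphi$ \emph{can be considered} as a polynomial of degree $k+l+3$, and if the particular configuration of nodes makes the square system singular one passes to degree $k+l+3+m$ for suitable $m$, adding $m$ free parameters and reducing to an underdetermined (hence solvable) system. In practice, for generic and in particular for the configurations used in the examples (where value- and derivative-nodes are disjoint apart from $\pm1$), the square system already has nonvanishing determinant, and I would remark this suffices for our purposes.

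Finally I would verify the smoothness claim at $t=\pm 1$: since $\varphi(\pm1)=\pm1$ matches the constant pieces and $\varphi'(\pm1)=0$ matches the zero derivative of the constant pieces, the piecewise-defined $\varphi$ is $C^1$ across $t=\pm1$; if one wants higher regularity one simply imposes additional vanishing-derivative conditions at $\pm1$, which only adds more rows to the same type of linear system and raises the polynomial degree accordingly, exactly as in the scheme above. The main obstacle is the solvability of the linear system for an arbitrary admissible configuration of interior nodes: a naive count gives a square system, but one must either invoke a nondegeneracy/genericity hypothesis on the nodes or allow the polynomial degree to grow so as to guarantee a solution. I expect this is the only non-routine point, and I would dispatch it by the degree-raising argument together with the remark that all configurations arising in the paper's constructions are nondegenerate.
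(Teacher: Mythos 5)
Your proposal is correct and, for the part the paper actually argues, follows the same route: the paper offers no formal proof of this lemma, only the preceding discussion in Appendix A, which (exactly as in your first step) writes $\varphi$ on $[-1,1]$ as a polynomial with as many coefficients as there are conditions and reads off the linear system by direct substitution. Where you go beyond the paper is in raising and addressing the solvability of that square system, and this is a genuine improvement rather than pedantry: the problem is a Birkhoff-type (lacunary Hermite) interpolation problem, and such systems can be singular for special node configurations. Concretely, for $k=1$, $l=0$ the general quartic with $\varphi(\pm1)=\pm1$, $\varphi'(\pm1)=0$ is the one-parameter family \eqref{exe-monotonic-trans-func}, every member of which has $\varphi'(0)=3/2$; so the condition $\varphi'(0)=u_1$ with $u_1\neq 3/2$ admits no degree-$4$ solution, and the lemma's existence claim ("can be considered as") fails at that degree. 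Your degree-raising remark, or a nondegeneracy hypothesis on the nodes, is therefore genuinely needed, and your observation that all configurations used in the paper's examples are nondegenerate is the honest way to reconcile the lemma with its applications. The kernel-counting discussion in your middle paragraph is somewhat loosely worded (the value-nodes and derivative-nodes need not overlap for the system to degenerate, as the example above shows), but your conclusion and the proposed fix are sound.
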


Examples of this construction can be found in Section \ref{sec:normalforms}.


\end{document}